\documentclass[11pt]{amsart}
\usepackage{amsmath,amsfonts,amsthm,amssymb, mathtools}
\usepackage{mathrsfs, graphicx,color,latexsym, tikz, 
multicol
}
\usepackage{color}
\usepackage{xcolor}
\usetikzlibrary{plotmarks}
\definecolor{xdxdff}{rgb}{0,0,0}
\definecolor{qqqqff}{rgb}{0,0,0}
\definecolor{uuuuuu}{rgb}{0,0,0}
\definecolor{ududff}{rgb}{0,0,0}
\usepackage[foot]{amsaddr}
\usepackage[e]{esvect}
\usetikzlibrary{shadows}
\usetikzlibrary{patterns,arrows,decorations.pathreplacing, calc}
\usetikzlibrary{arrows.meta}
\textwidth 160mm \textheight240mm \oddsidemargin=-0cm
\evensidemargin=0cm \topmargin=-1.5cm
\newtheorem{thm}{Theorem}[section]
\newtheorem{lem}[thm]{Lemma}
\newtheorem{cor}[thm]{Corollary}

\newtheorem{Def}{Definition}[section]
\newtheorem{prop}[thm]{Proposition}

\newtheorem{prob}{Problem}

\newtheorem{conj}{Conjecture}

\newcommand{\field}[1]{\mathbb{#1}}

\newcommand{\N}{\field{N}}
\newcommand\scalemath[2]{\scalebox{#1}{\mbox{\ensuremath{\displaystyle #2}}}}
\newcommand{\diam}{\mathrm{diam}}

\begin{document}
\title{\bf  The Hoffman program of graphs: old and new}
\author{Jianfeng Wang$^{1}$}
\address[Corresponding Author]{$^{1}$School of Mathematics and Statistics, Shandong University of Technology, Zibo 255049, China}
\email{jfwang@sdut.edu.cn}
\author{Jing Wang$^2$}
\address{$^2$Department of Applied Mathematics, Northwestern Polytechnical University, Xi'an, 710072, China}
\email{jwang66@aliyun.com}
\author{Maurizio Brunetti$^3$}
\address{$^3$Dipartimento di Matematica e Applicazioni, Universit\`a `Federico II', Naples (Italy)}
\email{maurizio.brunetti@unina.it}


\maketitle
\begin{abstract}

The Hoffman program with respect to any real or complex square matrix $M$ associated to a graph $G$ stems from A. J. Hoffman's pioneering work on  the limit points for the spectral radius of  adjacency matrices of graphs less than $\sqrt{2+\sqrt{5}}$. The program consists of two aspects: finding all the possible limit points of $M$-spectral radii of graphs 
and detecting all the connected graphs whose $M$-spectral radius does not exceed a fixed limit point. In this paper, we summarize the results on this topic  concerning several graph matrices, including the adjacency, the Laplacian, the signless Laplacian, the Hermitian adjacency and skew-adjacency matrix of  graphs. As well, the tensors of hypergraphs are  discussed.  Moreover, we obtain new results about the Hoffman program with relation to the $A_\alpha$-matrix. Some further problems on this topic are also proposed. \\

\noindent {\it AMS classification:} 05C50\\[1mm]
\noindent {\it Keywords}: Hoffman program; Graphs matrices; Limit point; Spectral radius.
\end{abstract}

\baselineskip=0.2in

\section{\large Introduction}

All graphs considered here are simple, undirected and finite.  For a graph $G= (V(G),E(G))$, let $M(G)$ be a corresponding real or complex square matrix defined in a prescribed way. The {\it $M$-polynomial} of $G$ is defined as {\rm det}$(\lambda{I} - M(G))$, where $I$ is the identity matrix. The {\it
$M$-spectrum} of $G$ is the multiset ${\rm Sp}_M(G)$ consisting of the eigenvalues of $M(G)$, and the largest absolute value of them is called the {\it
$M$-spectral radius}  of $G$. We denote it by $\rho\!_{_M}\!(G)$. For $v \in V(G)$, $d(v)$ denotes the degree of a vertex $v$, $D(G)={\rm diag}(d(v_1),d(v_2),\dots,d(v_n))$ is the degree matrix of $G$, and $\Delta(G)$ is the maximum vertex degree in $G$.  

The adjacency matrix $A(G)$, the Laplacian matrix $L(G)=D(G)-A(G)$ and the signless Laplacian matrix $Q(G)=D(G)+A(G)$ stand among the most widely studied graph matrices.  For $M \in \{A, L, Q\}$, the $M$-eigenvalues are all real and the $M$-spectral radius is equal to the $M$-index, i.e.the largest $M$-eigenvalue.

In order to explain what the Hoffman program for graphs is, we recall that 
a real number $\gamma(M)$ is said to be an  {\it $M$-limit point} of the $M$-spectral radius of graphs if there exists a sequence of graphs $\{G_k\, |\, k\in \mathbb{N}\}$ such that  $$\rho\!_{_M}\!(G_i) \neq \rho\!_{_M}\!(G_j) \quad \text{whenever  $i \neq j$}, \quad \text{and}  \quad \lim_{k \rightarrow \infty}\rho\!_{_M}\!(G_k)=\gamma(M).$$
The Hoffman program consists of two steps: i)  establishing whether any $M$-limit point exists and determining all the possible values; ii) finding all the connected graphs whose $M$-spectral radius does not exceed a fixed $M$-limit point.

Hoffman program historically originated from two sources. The first one is the investigation carried out by A. J. Hoffman himself
\cite{hoffman} on the limit points of the $A$-index. The second source also goes back to the early seventies, and is connected to a geometric problem involving equiangular lines and the corresponding root systems  independently examined by 
Smith \cite{smith} and Lemmens and Seidel \cite{lem-sei}.

In this paper, we first survey the results on the Hoffman program scattered in literature. Then, we establish some new results on limit points for the $A_\alpha$-matrix of graphs introduced in \cite{niki} by Nikiforov.  

In Sections 2-4, several types of graphs will be mentioned  to acknowledge past achievements concerning the Hoffman program for $A$,$L$, and $Q$. Among them, we find the path $P_n$, the cycle $C_n$, the star $K_{1,n-1}$ and the graphs depicted in Fig.~1, where we assume that 
$c \geq b \geq a \geq 1$ for the $T$-shape tree $T_{a,b,c}$, and $c \geq a \geq 1$ for the $H$-shape tree $Q_{a,b,c}$. 

Section 5-8 also have a survey flavour and  are respectively devoted to collect what is known on the limit points of the Hermitian adjacency matrix ${\mathcal H} (G)$ associated to a mixed graph $G$; of the  $\{0, \pm 1\}$-matrix associated to a signed graph; of the skew-adjacency matrix associated to oriented graphs; and of the symmetric tensor associated to uniform hypergraphs.

Section 9 contains new results on the $A_{\alpha}$-limit points. In particular, we compute the $A_{alpha}$-limit points of some families of compound graphs. In another paper, we are going to show how these results can be employed to find out all the smallest $A_{\alpha}$-limit points larger than $2$.

For any fixed $\rho \geqslant 0$, the symbols  $\mathcal{G}^{\rho}_M$, $\mathcal{G}^{\leqslant \rho}_M$,   $\mathcal{G}^{<\rho}_M$,   and $\mathcal{G}^{>\rho}_M$ will respectively denote the set of connected graphs whose $M$-spectral radius is  equal, not exceeding, less than, and larger than $\rho$.

\begin{figure}[h]
\begin{center}
\includegraphics[width=0.9\textwidth]{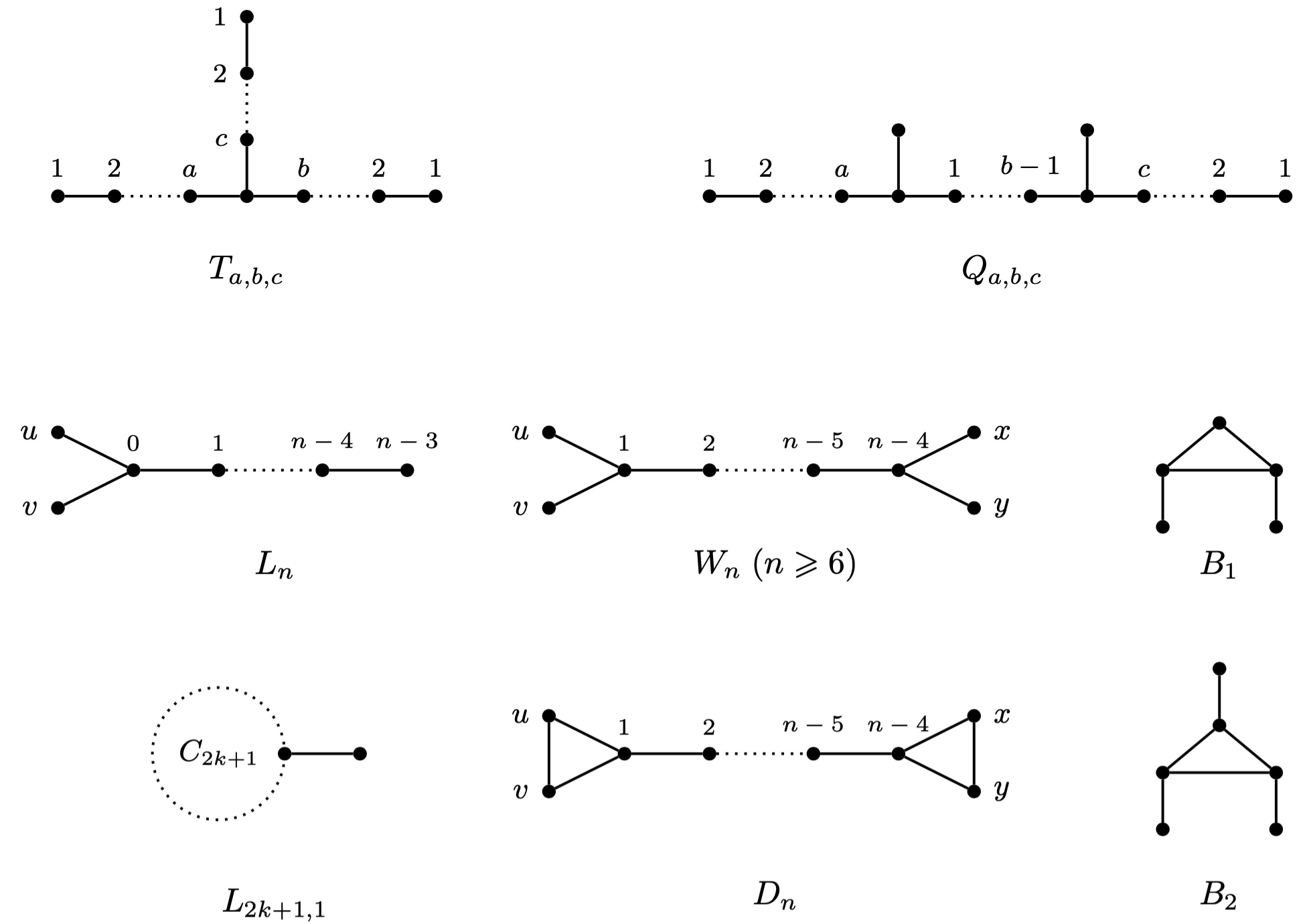}
\caption{ \label{fig1}  \small Some graphs used in the paper.}
\end{center}
\end{figure}

\section{Adjacency matrix}

The Hoffman program was initially carried out with respect to the adjacency matrix. Hoffman \cite{hoffman} contributed with the following pioneering result.

\begin{thm}{\rm \cite[Hoffman's theorem]{hoffman}}\label{hoffman} Let $\tau$ denote the number $(\sqrt{5}+1)/2$.
For $n \in \N$, let $\eta_n=\beta_n^{\frac{1}{2}}+\beta_n^{-\frac{1}{2}}$, where
$\beta_n$ is the positive root of
\[
\phi_n(x)=x^{n+1}-(1+x+x^2+\cdots+x^{n-1}).
\]
The numbers $2=\eta_1<\eta_2<\cdots$ are  the only $A$-limit points of the $A$-spectral radius of graphs smaller than
$\lim\limits_{n\rightarrow \infty}\eta_n = \tau^{\frac{1}{2}}+\tau^{-\frac{1}{2}}= \sqrt{2+\sqrt{5}}.$
\end{thm}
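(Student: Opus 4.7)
The strategy is to treat realisability and uniqueness of the $\eta_n$ as $A$-limit points as separate tasks.

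For realisability, for every fixed $n\in\N$ I would produce a family of trees $\{T_n^{(k)}\}_{k\in\N}$ whose $A$-spectral radii form a strictly monotone sequence converging to $\eta_n$. A natural construction is to attach a fixed ``tail'' depending on $n$ to one endpoint of a growing path $P_k$. Applying Schwenk's pendant-vertex recurrence, the characteristic polynomial of $T_n^{(k)}$ factors, and under the substitution $x=t^{1/2}+t^{-1/2}$ the factor controlling the spectral radius reduces, in the limit $k\to\infty$, to $\phi_n(t)$; this yields $\lim_{k\to\infty}\rho_A(T_n^{(k)})=\eta_n$. The inequalities $\eta_1<\eta_2<\cdots$ and the identification $\lim_{n}\eta_n=\sqrt{2+\sqrt{5}}$ then follow from the asymptotics of $\phi_n$: the relation $(x-1)\phi_n(x)=x^{n+2}-x^{n+1}-x^n+1$ forces $\beta_n$ to converge to the positive root $\tau$ of $x^2-x-1=0$ as $n\to\infty$.

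For uniqueness, suppose $\{G_k\}$ is an infinite sequence of graphs with pairwise distinct $A$-spectral radii tending to some $\gamma<\sqrt{2+\sqrt{5}}$. By Cauchy interlacing, none of the $G_k$ may contain a connected subgraph whose own $A$-spectral radius exceeds $\gamma$; combining this with Smith's classification of graphs having $\rho_A\leq 2$ (the extended Dynkin diagrams) produces strong structural restrictions. For $k$ sufficiently large, each $G_k$ must be a tree with bounded maximum degree, comprising a single ``long'' path on which only a bounded number of small, explicit decorations may be grafted. A finite case analysis then shows that, up to passing to a subsequence, $\{G_k\}$ eventually coincides in shape with one of the families $\{T_n^{(k)}\}$, and hence $\gamma=\eta_n$ for some $n$.

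The principal obstacle lies in the uniqueness step, specifically in assembling a sufficiently sharp catalogue of forbidden subgraphs: since $\sqrt{2+\sqrt{5}}\approx 2.0582$ lies only slightly above $2$, even minor deviations from the path-like skeleton—a third branch of moderate length, a second pendant in the wrong position, or the presence of a cycle—already push the spectral radius past the threshold. Establishing this catalogue with full rigour, by repeatedly invoking interlacing against suitable small trees whose radii are computed explicitly or bounded from below using characteristic polynomial manipulations, is what requires the bulk of the work; once it is in hand, the identification of all $A$-limit points below $\sqrt{2+\sqrt{5}}$ with members of $\{\eta_n\}$ is essentially forced.
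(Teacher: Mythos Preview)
The paper under review is a survey and does not contain a proof of this theorem: Theorem~\ref{hoffman} is stated with the citation \cite[Hoffman's theorem]{hoffman} and no argument is supplied, so there is no ``paper's own proof'' against which to compare your proposal.

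That said, your sketch is broadly consonant with Hoffman's original 1972 argument. He too separates existence (exhibiting, for each $n$, a sequence of trees---essentially the $T$-shape trees $T_{1,n,k}$ as $k\to\infty$---whose spectral radii converge to $\eta_n$) from completeness (showing no other limit points occur below $\sqrt{2+\sqrt{5}}$). The completeness step in Hoffman's paper proceeds much as you describe: one first observes that any graph with $\rho_A>2$ which is not a tree already has $\rho_A\geq\sqrt{2+\sqrt{5}}$, reducing to trees; then Smith's classification and interlacing force the tree to have maximum degree~$3$ and a tightly constrained shape, after which the possible limits are read off. Your identification of the forbidden-subgraph catalogue as the crux is accurate; later papers (Cvetkovi\'c--Doob--Gutman \cite{CDG} and Brouwer--Neumaier \cite{BN}, summarised in Theorem~\ref{22+5}(iii)) made this catalogue fully explicit. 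One small correction: your proposed identity $(x-1)\phi_n(x)=x^{n+2}-x^{n+1}-x^n+1$ is not quite right; multiplying $\phi_n$ by $x-1$ gives $x^{n+2}-2x^{n+1}+x^{n}+x-1$ for $n\geq 1$ (check $n=1$: $\phi_1(x)=x^2-1$, so $(x-1)\phi_1(x)=x^3-x^2-x+1$), though the conclusion $\beta_n\to\tau$ still follows.
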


As recalled in Section~1, the $A$-spectral radius of a graph $G$ is also the largest eigenvalue of $A(G)$. This is due to the fact that $A$ is non-negative  (see \cite[Theorem 0.2]{cve-book}).

Encouraged by Hoffman, Shearer \cite{she} subsequently determined all the remaining $A$-limit points. In fact, he proved the following theorem.
\begin{thm}{\rm \cite{she}}\label{she-lim}
For any $\lambda \geq \sqrt{2+\sqrt{5}}=2.058+$, there exists a sequence of graphs $\{G_k\, |\, k\in \mathbb{N}\}$ such that $\lim\limits_{k \rightarrow \infty}\rho_A(G_k) = \lambda$.
\end{thm}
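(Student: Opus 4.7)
The plan is to produce, for each target $\lambda\geq \sqrt{2+\sqrt{5}}$, an explicit infinite family of (distinct) connected graphs whose $A$-spectral radii converge to $\lambda$. The natural candidates are caterpillars, that is, trees obtained by attaching pendant vertices along a spine path $P_m$; following Shearer's idea, I would parametrise them by a sequence $(a_1,a_2,\ldots,a_m)$ where $a_i$ is the number of leaves hung at the $i$-th spine vertex, and denote the resulting caterpillar by $T(a_1,\ldots,a_m)$. Such caterpillars are a flexible testbed because (i) their characteristic polynomial satisfies a simple spine-traversal recursion that expresses $\phi_{T(a_1,\ldots,a_m)}(x)$ via $\phi_{T(a_1,\ldots,a_{m-1})}(x)$ and $\phi_{T(a_1,\ldots,a_{m-2})}(x)$, and (ii) by Perron--Frobenius and standard interlacing, both lengthening the spine by a vertex and adding one more pendant at any fixed position strictly increase $\rho_A$.

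The first concrete step would be to establish two monotonicity/continuity lemmas for caterpillars: lengthening the spine while keeping the pendant pattern bounded drives $\rho_A$ to a well-defined limit (an ``infinite caterpillar'' spectral radius), and small local modifications (adding one extra leaf at spine vertex $i$ well inside a long spine) change $\rho_A$ by an amount that can be made arbitrarily small by pushing $i$ far from both endpoints. This second estimate is the engine that creates density: if the spine is long enough, every choice of where to place the next pendant moves $\rho_A$ only slightly, so the set of caterpillar spectral radii becomes finer and finer.

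With these tools in hand, I would carry out a greedy construction. Starting from a caterpillar $T_1$ with $\rho_A(T_1)$ on the correct side of $\lambda$ (this uses Hoffman's Theorem~\ref{hoffman} together with Theorem~\ref{she-lim}'s threshold: the value $\sqrt{2+\sqrt{5}}$ is itself a limit point, so we can begin arbitrarily close to $\lambda$ by taking $\lambda\geq\sqrt{2+\sqrt{5}}$), inductively choose $T_{k+1}$ by either appending a new spine vertex or adding a pendant at a position chosen to minimise $|\rho_A(T_{k+1})-\lambda|$ while ensuring $\rho_A(T_{k+1})\ne \rho_A(T_j)$ for $j\leq k$. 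The monotonicity lemmas guarantee that all possible spectral radii available at each step form a set whose mesh tends to zero, so one can always land within $1/k$ of $\lambda$.

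The main obstacle is the quantitative control step: showing that the perturbation caused by one added leaf at an interior spine vertex can be made $O(1/m)$ or better as the spine length $m$ grows, and that by ranging the insertion position the achievable values of $\rho_A$ fill an interval whose length shrinks no faster than this perturbation bound. This requires delicate use of the spine recursion for $\phi_T(x)$ near $x=\lambda$, together with the observation that the Perron eigenvector of a long caterpillar decays away from any fixed interior vertex, so pendants added far inside contribute a vanishing rank-one perturbation. Once this quantitative density claim is in place, the diagonal/greedy sequence $\{T_k\}$ satisfies $\rho_A(T_k)\to\lambda$ with all values distinct, as required.
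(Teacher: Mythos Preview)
The paper does not supply its own proof of this statement: Theorem~\ref{she-lim} is quoted as a cited result from Shearer~\cite{she} in this survey. The only information the paper gives about the method is incidental --- in the remark preceding Proposition~6.3 and in the proof of Proposition~7.2 it is recorded that Shearer's argument produces, for each $\lambda\geq\sqrt{2+\sqrt{5}}$, a sequence of \emph{nested} open caterpillars whose $A$-spectral radii converge to~$\lambda$. Your proposal is fully consistent with that description and is, in outline, Shearer's approach.

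Two comments on the sketch itself. First, the sentence invoking ``Theorem~\ref{she-lim}'s threshold'' to justify the existence of a starting caterpillar is at best awkwardly worded and at worst circular; all you actually need here is that caterpillar spectral radii are unbounded above (stars $K_{1,m}$ suffice) and that some caterpillar has radius below any given $\lambda\geq\sqrt{2+\sqrt{5}}$ (paths suffice), both of which are elementary and require no appeal to the theorem being proved. Second, you have correctly isolated the real content: the quantitative claim that adding one leaf deep inside a long spine perturbs $\rho_A$ by an amount tending to zero, and that the available perturbations fill an interval. Your Perron-vector decay heuristic is morally correct but is only a heuristic as stated; Shearer handles this step not via eigenvector estimates but by a direct recursion on the characteristic polynomial along the spine together with an explicit greedy choice of the sequence $(a_1,a_2,\ldots)$, which makes the convergence elementary and yields the nestedness the paper alludes to. If you want to turn your outline into a proof, that recursion is where the work lies.
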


The graphs with $A$-index at most $\sqrt{2+\sqrt{5}}$ was characterized step-by-step in \cite{smith,hoffman,CDG,BN}. More precisely, Smith \cite{smith} determined all the connected graphs whose $A$-index is not greater than 2. They are now known in the literature as Smith graphs. After \cite{hoffman}, Cvetkovi\'c et al. determined the structure of graphs with $A$-index between 2 and $\sqrt{2+\sqrt{5}}$ in \cite{CDG}. Their description was completed a few years later by Brouwer and Neumaier \cite{BN}. We summarize such achievements
in the following theorem.
\begin{thm}\label{22+5} Let $\rho_1= \sqrt{2 +\sqrt{5}}$. The three sets  $\mathcal{G}^{<2}_A$, $\mathcal{G}^{2}_A$ and $\mathcal{G}^{ >2}_A \cap \mathcal{G}^{<\rho_1}_A$ can be described as follows.
\begin{itemize}
\item[$\mathrm{(i)}$]{\rm \cite{smith}} $\mathcal{G}^{<2}_A = \{ P_n,\; T_{1,1,n}, \; T_{1,2,c} \mid n \in \N, \; 2 \leqslant c \leqslant 4 \}$.
\\[-.6em]
\item[$\mathrm{(ii)}$]{\rm \cite{smith}}
$\mathcal{G}^{2}_A = \{ C_{n+2}, \; W_{n+5} \mid n \in \N \} \cup \{ K_{1,4}, \; T_{2,2,2}, \; T_{1,2,5}, \; T_{1,3,3} \}$.\\[-.6em]
\item[$\mathrm{(iii)}$]{\rm \cite{BN, CDG}}  $\mathcal{G}^{ >2}_A \cap \mathcal{G}^{<\rho_1}_A$ is the disjoint union of 
the subsets 
$$ \mathcal T_1= \{ T_{1,2,n+5}, \; T_{1,n+2,m+3}, \; T_{2,2,n+2} \mid n\in \N, \; m \geqslant n-1\} \cup \{T_{2,3,3} \}.$$
and
\[\label{mink}  {\mathcal T}_2= \{ Q_{1,1,2}, \; Q_{2,4,2}, \; Q_{2,5,3}, \; Q_{3,7,3}, \; Q_{3,8,4} \} \cup \{ Q_{a,b,c} \mid (a,c) \in \N^2 \setminus (1,1), \; b \geqslant b^*(a,c)  \}, 
\]
where  $ b^{*}(a, c) = \begin{cases}  a+c+2, \qquad \text{ for $a > 2$},\\
c+3,\qquad \quad \;\;\, \text{ for $a = 2$},\\
c,\qquad \qquad \quad \;\, \text{ for $a =1$}.\\
\end{cases}
$ 
\end{itemize}
\end{thm}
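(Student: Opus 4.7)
The plan is to combine three standard ingredients: (a) strict monotonicity of $\rho_A$ under proper connected subgraph inclusion, a Perron--Frobenius consequence of $A(G)$ being nonnegative and irreducible; (b) the pendant-vertex recursion
\[
\phi(G,x)=x\,\phi(G-v,x)-\phi(G-v-u,x),
\]
valid for any pendant $v$ with unique neighbour $u$, which computes $\phi(T,x)$ for any tree $T$ in closed form; and (c) the list of $A$-limit points below $\rho_1=\tau^{1/2}+\tau^{-1/2}$ supplied by Theorem~\ref{hoffman}. Parts (i) and (ii) together amount to Smith's classical theorem that the connected graphs with $\rho_A\leq 2$ are exactly the connected subgraphs of the extended Dynkin diagrams; part (iii) requires finer analysis near $\rho_1$.

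For (i) and (ii), the forward inclusions reduce to direct computation on each listed graph: $\rho_A(P_n)=2\cos(\pi/(n+1))<2$ is classical; for each exceptional tree $T_{1,1,n}$ or $T_{1,2,c}$ with $c\leq 4$ one evaluates $\phi(T,2)$ via the recursion and finds $\phi(T,2)>0$; and every graph in $\mathcal{G}^2_A$ admits an explicit positive $2$-eigenvector (constant on the cycles; symmetric entries dictated by the automorphism group on $K_{1,4}$, $T_{2,2,2}$, $T_{1,2,5}$, $T_{1,3,3}$, and $W_{n+5}$). For the converse, any connected $G$ with $\rho_A(G)\leq 2$ cannot properly contain any graph in $\mathcal{G}^2_A$ (else strict monotonicity forces $\rho_A(G)>2$), so $G$ must be a tree of maximum degree at most $3$ with at most one branch vertex; a finite inspection then places $G$ in the stated lists.

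The heart of the proof is (iii). For $G$ with $2<\rho_A(G)<\rho_1$, note that $G$ properly contains some graph in $\mathcal{G}^2_A$. First I would rule out cyclic graphs: for $C_n$ with one pendant edge attached, the recursion gives $\phi(x)=x\,\phi(C_n,x)-\phi(P_{n-1},x)$, and the Chebyshev substitution $x=2\cosh\theta$ reveals that the corresponding spectral radius decreases to $\rho_1$ from above as $n\to\infty$, so it stays strictly above $\rho_1$ for every finite $n$; any other multi-cyclic connected graph contains such a cycle-plus-pendant and exceeds the bound by monotonicity. Hence $G$ is a tree. Next, $\Delta(G)\leq 3$: otherwise $G$ properly contains $K_{1,4}$, and both minimal tree extensions $K_{1,5}$ (with $\rho_A=\sqrt{5}$) and $T_{1,1,1,2}$ (with $\rho_A=\sqrt{(5+\sqrt{13})/2}$) already exceed $\rho_1$. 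Finally, $G$ has at most two vertices of degree $3$: a short case analysis on the possible three-branch topologies---e.g.\ the minimal tree with three branch vertices lying on a path of length two (whose spectral radius is $\sqrt{(5+\sqrt{17})/2}$), and the Y-shaped tree with a central degree-$3$ vertex joined to three degree-$3$ vertices each carrying two leaves (whose spectral radius is $\sqrt{5}$)---shows each exceeds $\rho_1$, and monotonicity handles all further extensions. Therefore $G$ must be a $T$-shape $T_{a,b,c}$ or an $H$-shape $Q_{a,b,c}$.

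It remains to determine the admissible parameter triples. Writing $\phi(T_{a,b,c},x)$ and $\phi(Q_{a,b,c},x)$ explicitly as linear combinations of the path polynomials $\phi(P_j,x)$ via the pendant recursion applied along each arm, substituting $x=\rho_1$, and using $\rho_1^2=2+\sqrt 5$, one reduces the classification to a sign analysis of $\phi(\cdot,\rho_1)$. For the $T$-shapes, lengthening a single arm flips the sign of $\phi(T_{a,b,c},\rho_1)$ at most once as the arm grows, and a short monotonicity argument in each of $a$, $b$, $c$ extracts the list $\mathcal{T}_1$. The main obstacle is the $H$-shape family. Here the asymptotic $\lim_{b\to\infty}\rho_A(Q_{a,b,c})$ is a Hoffman limit point strictly below $\rho_1$ (so an entire tail of $b$-values qualifies for each fixed $(a,c)$), and a careful sign analysis of $\phi(Q_{a,b,c},\rho_1)$ treated as a linear recursion in $b$ pins down the piecewise threshold $b^*(a,c)$ in the statement. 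The five sporadic exceptions $Q_{1,1,2}$, $Q_{2,4,2}$, $Q_{2,5,3}$, $Q_{3,7,3}$, $Q_{3,8,4}$---each of which lies below $\rho_1$ despite failing $b\geq b^*(a,c)$---must be identified by hand; ruling out all other sub-threshold $H$-graphs by a direct finite check is the most intricate part of the whole argument.
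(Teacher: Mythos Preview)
The paper itself does not prove this theorem; it is a survey result assembled from \cite{smith,CDG,BN}, so there is no in-paper argument to compare against. Your outline follows the standard route those references take, and most of it is sound: the Smith classification in (i)--(ii), the lollipop argument ruling out non-trees (the limit $\rho_A(C_n+\text{pendant})\searrow\rho_1$ is correct and comes out exactly as you indicate via $x=2\cosh\theta$), and the $\Delta\le 3$ step (every tree properly containing $K_{1,4}$ contains $K_{1,5}$ or $T_{1,1,1,2}$) are all fine.

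The genuine gap is in your reduction to at most two branch vertices. You check two specific trees with three branch vertices and then write ``monotonicity handles all further extensions.'' But the minimal trees with three degree-$3$ vertices form an \emph{infinite} family, parametrised by the lengths $p,q$ of the two internal paths separating the branch vertices, and a tree with $p,q$ large does \emph{not} contain your $8$-vertex example (three consecutive branch vertices) nor your $10$-vertex $Y$-tree as a subgraph --- its only degree-$3$ vertices are far apart. Worse, monotonicity runs the wrong way here: by the Hoffman--Smith internal-path lemma, increasing $p$ or $q$ \emph{decreases} $\rho_A$, so checking the smallest case gives no information about the large-$p,q$ tail. What actually needs to be shown is that the three-branch tree $T'_{p,q}$ satisfies $\rho_A(T'_{p,q})>\rho_1$ for \emph{all} $p,q\ge 1$; since $\rho_A(T'_{p,q})$ is strictly decreasing in each of $p,q$, this amounts to proving the limit as $p,q\to\infty$ is $\ge\rho_1$. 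That limit computation (it equals $\rho_1$ exactly, matching $\lim_m\rho_A(T_{1,m,m})=\rho_1$) is the missing piece, and it is of the same flavour as --- not a consequence of --- your lollipop calculation. The references \cite{CDG,BN} handle this via explicit characteristic-polynomial asymptotics; without it, your case analysis does not close.
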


Woo and Neumaier \cite{woo} characterized the structure of the graphs whose $A$-index is between $\sqrt{2+\sqrt{5}}$ and $\frac{3}{2}\sqrt{2}$. We recall that  an {\it open quipu} is a tree of maximum vertex degree $3$ such that all vertices of degree 3 lie on a path; a {\it closed quipu} is a connected graph  of maximum vertex degree $3$ containing just one cycle $C$, and all vertices of degree 3 lie on $C$; finally,  a {\it dagger} is a path with a $3$-claw attached to an end vertex.

\begin{thm} {\rm \cite{woo}} \label{woo}
Let $\rho_1= \sqrt{2+\sqrt{5}}$ and $\rho_2= \frac{3}{2}\sqrt{2}$. A graph in $\mathcal{G}^{> \rho_1}_A  \cap  \mathcal{G}^{<\rho_2}_A $ is either an open quipu, a closed quipu, or a dagger.
\end{thm}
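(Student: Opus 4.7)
The plan is to combine monotonicity of the $A$-spectral radius under connected induced subgraphs with a finite forbidden-subgraph analysis. In a connected graph $G$, every proper connected induced subgraph $H$ satisfies $\rho_A(H) < \rho_A(G)$, so the hypothesis $\rho_A(G) < \rho_2 = \tfrac{3}{2}\sqrt{2}$ rules out every connected $F \subseteq_{\text{ind}} G$ with $\rho_A(F) \geq \rho_2$. I would aim to compile a finite list $\mathcal{F}$ of minimal such forbidden graphs, and then show that any connected graph avoiding $\mathcal{F}$ must be an open quipu, a closed quipu, or a dagger.

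The first structural consequence is a degree bound. Because $\rho_A(K_{1,5}) = \sqrt{5} > \rho_2$, every vertex of $G$ has degree at most $4$. When $G$ has a vertex $v$ of degree exactly $4$, the plan is to show that $v$ is the unique high-degree vertex and that three of its four branches are pendant edges, forcing $G$ to be a dagger. Concretely, extending two or more of the branches at $v$ by extra edges produces trees with $A$-spectral radius already exceeding $\rho_2$; these small enlargements of $K_{1,4}$ form the first entries of $\mathcal{F}$.

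When $\Delta(G) \leq 3$, I would add further forbidden configurations to $\mathcal{F}$ of three flavours: $T$-shape-like trees carrying a third branching vertex outside the axis spanned by the other two; unicyclic graphs bearing a degree-$3$ vertex joined to the cycle through a nontrivial pendant path; and graphs containing two edge-disjoint cycles (theta-graphs and two-cycle dumbbells with sufficiently many edges). Each of these families, for sufficiently large parameters, satisfies $\rho_A \geq \rho_2$, which I would verify by direct computation of characteristic polynomials or equitable-partition quotients. Excluding them forces $G$ to contain at most one cycle, all degree-$3$ vertices to lie on a common path in the acyclic case (giving an open quipu), and all degree-$3$ vertices to lie on the unique cycle in the unicyclic case (giving a closed quipu).

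The main obstacle is calibrating the forbidden list sharply. The allowed families — quipus and daggers with long tails — accumulate at $\rho_2$ from below, so the forbidden subgraphs hover only just above $\rho_2$ in $A$-spectral radius, and each bound $\rho_A(F) \geq \rho_2$ must be verified by a clean sign argument on the characteristic polynomial of $F$ evaluated at $x = \rho_2$, rather than by a crude comparison with $K_{1,5}$ or $T_{2,2,2}$. Once the forbidden list is in place, the structural case analysis that pins down $G$ as one of the three advertised shapes is largely routine, but the delicate spectral estimates underlying $\mathcal{F}$ form the technical heart of the argument.
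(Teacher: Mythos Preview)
The paper does not prove this theorem; it is a survey that quotes the result from Woo and Neumaier \cite{woo} without reproducing the argument. There is therefore no proof in the paper to compare your proposal against.

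That said, your outline matches in spirit the strategy of \cite{woo}: bound the maximum degree via $K_{1,5}$, dispose of the $\Delta=4$ case by showing that $G$ must be a dagger, and in the $\Delta\leq 3$ case rule out extra branching vertices and extra cycles by exhibiting a finite list of forbidden induced subgraphs with spectral radius at least $\rho_2$. As a plan this is sound. What you have written, however, is explicitly a plan rather than a proof: the substantive content lies in the precise forbidden list and in the verifications $\rho_A(F)\geq\rho_2$ for each $F$, none of which you actually carry out. You correctly identify this as the technical heart, but you do not supply it. If you intend a self-contained proof you will need those computations, which Woo and Neumaier perform in detail; otherwise a citation to \cite{woo} is the appropriate move, and that is exactly what the present paper does.
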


All daggers are in $\mathcal{G}^{< \rho_2}_A$. On the contrary, many open quipus and closed quipus have spectral radii greater than  $\frac{3}{2}\sqrt{2}$, and the structural conditions ensuring whether a quipu $T$  is in  $\mathcal{G}^{< \rho_2}_A$ or not are still to be completely determined. In any case, restrictions on the diameters of quipus belonging to $\mathcal{G}^{< \rho_2}_A$ are given by the following theorem.

\begin{thm} {\rm \cite{LL}} Let $\rho_2= \frac{3}{2}\sqrt{2}$. If an open quipu with $n\geqslant 6$ vertices belongs to $\mathcal{G}^{< \rho_2}_A$, then its diameter is at least $(2n-2)/3$. If a closed quipu with $n \geqslant 13$ vertices belongs to $\mathcal{G}^{< \rho_2}_A$, then
its diameter lies in the interval $(n/3, (2n-2)/3]$. 
\end{thm}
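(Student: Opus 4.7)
The plan is to convert the hypothesis $\rho_A(G) < \rho_2 = \tfrac{3}{2}\sqrt{2}$ into a structural sparseness constraint on the quipu and then to read off a diameter bound.

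For the open quipu case, write $T$ as its main path $P = v_1 v_2 \cdots v_p$ together with pendant paths of lengths $t_1, \ldots, t_k$ attached at interior positions $a_1 < a_2 < \cdots < a_k$, so that $n = p + \sum_{i=1}^{k} t_i$. The first step is a local constraint: the subgraph induced on two consecutive branch vertices, their tails, and the segment of $P$ joining them is an $H$-shape tree of type $Q_{t_i,\,a_{i+1}-a_i-1,\,t_{i+1}}$, and (being an induced subgraph) must itself have $A$-spectral radius strictly less than $\rho_2$. By extending the classification of Theorem~\ref{22+5}(iii) from the threshold $\rho_1 = \sqrt{2+\sqrt{5}}$ up to $\rho_2$, one derives explicit inequalities $a_{i+1} - a_i \geqslant \varphi(t_i, t_{i+1})$ with $\varphi$ growing linearly in each variable; an analogous argument on $T$-shape subtrees hanging off either end of $P$ yields endpoint bounds $a_1 - 1 \geqslant \varphi'(t_1)$ and $p - a_k \geqslant \varphi'(t_k)$.

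The second step is a telescoping sum. Adding the $k+1$ local inequalities gives
\[
p - 1 \;\geqslant\; \sum_{i=1}^{k-1} \varphi(t_i, t_{i+1}) + \varphi'(t_1) + \varphi'(t_k),
\]
and since each unit of tail length contributes at least a fixed amount on each side to $\varphi$, the right-hand side is at least $2\sum_i t_i + O(1)$. Combining with $n = p + \sum_i t_i$ yields $3p \geqslant 2n + O(1)$, and the trivial inequality $d \geqslant p - 1$ (the diameter is at least the length of the main path) produces $d \geqslant (2n-2)/3$ once $n \geqslant 6$ absorbs the $O(1)$ slack.

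The closed quipu case is treated analogously by replacing $P$ with the unique cycle $C$ of length $c$ carrying all degree-$3$ vertices. The local constraints around each branch on $C$ sum to $c \geqslant \alpha \sum_i t_i + O(1)$. The diameter of a closed quipu obeys
\[
\lfloor c/2 \rfloor + \max_i t_i \;\leqslant\; d \;\leqslant\; \lfloor c/2 \rfloor + t_{(1)} + t_{(2)},
\]
where $t_{(1)}, t_{(2)}$ denote the two largest tail lengths. The left inequality, together with the bound $c \geqslant \alpha \sum_i t_i + O(1)$ and $n = c + \sum_i t_i$, forces $d > n/3$; the right inequality, combined with the same length bound on $c$, gives $d \leqslant (2n-2)/3$ once $n \geqslant 13$.

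The main obstacle is the opening step: strengthening the characterization in Theorem~\ref{22+5}(iii) above the threshold $\rho_1$. Because $\rho(T_{1,3,n})$ tends to $\rho_2$ from below as $n \to \infty$, the family of $H$-trees and $T$-trees with $A$-spectral radius less than $\rho_2$ is infinite and is not covered by the explicit finite lists of Theorem~\ref{22+5}(iii). The inequalities defining $\varphi$ and $\varphi'$ must therefore be produced by a direct eigenvalue argument---for instance, by analyzing the transfer matrix for the eigenvector equation $A x = \lambda x$ along the pendant paths, and comparing $\lambda$ against $\rho_2$---rather than via a finite case analysis. Once $\varphi$ and $\varphi'$ are in hand with the linear growth rate indicated above, both the open and the closed quipu conclusions follow by the bookkeeping described.
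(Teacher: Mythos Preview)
The paper is a survey and does not supply a proof of this theorem; the statement is quoted from Lan and Lu~\cite{LL} without argument, so there is no in-paper proof to compare against.

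Judged on its own, your proposal correctly identifies the architecture of the argument in~\cite{LL}: local spectral constraints on $H$-shape and $T$-shape induced subtrees force spacing inequalities between consecutive branch vertices, and telescoping these along the spine (or the cycle) converts them into a global diameter bound. But what you have written is a strategy, not a proof. You explicitly defer the derivation of the spacing functions $\varphi$ and $\varphi'$, and that is exactly where the spectral hypothesis $\rho_A < \tfrac{3}{2}\sqrt{2}$ does its work. Writing the outcome as ``$3p \geqslant 2n + O(1)$'' and then asserting that ``$n \geqslant 6$ absorbs the $O(1)$ slack'' is not an argument: the constant hidden in the $O(1)$ \emph{is} the calculation, and whether it is absorbed at $n = 6$ (rather than at $n = 60$, or never) depends entirely on the precise form of $\varphi$ and $\varphi'$, which you have not supplied. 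The same applies to the closed-quipu case, where the unspecified constant $\alpha$ in ``$c \geqslant \alpha \sum_i t_i + O(1)$'' is what decides whether the bounds $n/3$ and $(2n-2)/3$ emerge with those exact coefficients and thresholds. In short, the outline is right, but the proof is the part you have labelled ``the main obstacle.''
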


\begin{figure}[h]
\begin{center}
\includegraphics[width=0.9\textwidth]{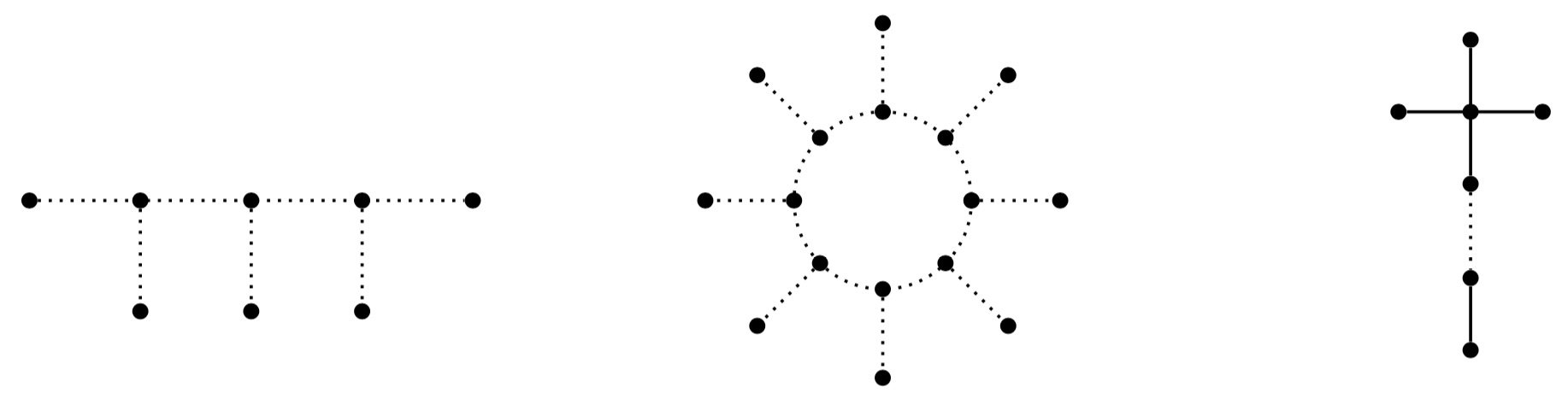}
\caption{ \label{fig2}  \small From left to right, an open quipu, a closed quipu and a dagger}
\end{center}
\end{figure}

\section{Laplacian matrix}

Guo obtained a Hoffman-like theorem for the $L$-spectrum in  \cite{guo-lim}. More precisely, he obtained the $L$-limit points of the $L$-spectral radius of graphs less than $2+\omega+\omega^{-1}=4.38+$,  where $\omega = \frac{{\;}1{\;}}{3}\left((19 + 3\sqrt{33})^{\frac{{\;}1{\;}}{3}} + (19 - 3\sqrt{33})^{\frac{{\;} 1{\;}}{3}}+1 \right)$.

\begin{thm}{\rm \cite{guo-lim}}\label{GLlimit}
Let $\beta_0 = 1$ and $\beta_n (n \geq 1)$ be the largest positive root of $$f_n(x) = x^{n+1}-(1+x+\cdots+x^{n-1})(\sqrt{x}+1)^2.$$ Let
$\alpha_n = 2+ \beta_n^{\frac{1}{2}} + \beta_n^{\frac{-1}{2}}$. Then,
$$4 = \alpha_0 < \alpha_1 < \alpha_2 < \cdots$$ are all the limit points
of $L$-spectral radius of graphs less than $\lim\limits_{n\rightarrow\infty}\alpha_n=2+\omega+\omega^{-1}$.
\end{thm}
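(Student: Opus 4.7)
The strategy is to mirror Hoffman's original argument (Theorem~\ref{hoffman}) adapted to the Laplacian setting. The heuristic that motivates the substitution $\alpha_n = 2 + \beta_n^{1/2} + \beta_n^{-1/2}$ is the following: if $G$ contains a long internal path, then an eigenvector of $L(G)$ for an eigenvalue $\alpha$ satisfies, at interior vertices of degree two on that path, a second-order linear recurrence whose characteristic equation is $t^2 - (\alpha - 2)t + 1 = 0$, with roots $\beta^{\pm 1/2}$. The extra factor $(\sqrt{x}+1)^2$ in $f_n(x)$ will arise as the boundary data imposed at a pendant-decorated endpoint of such a path.

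The first half of the proof is \emph{realization}. For each $n$ one would exhibit a family of trees $\{G_n^{(k)}\}_{k \in \N}$ --- most naturally, certain caterpillar or $T$-shape trees with two branch lengths frozen and the third growing with $k$ --- whose Laplacian characteristic polynomial one computes via the standard vertex- and edge-deletion recursions for $L(G)$. Substituting the transfer-matrix ansatz into the resulting recursion yields the polynomials $f_n$; their largest positive roots are exactly the limits $\beta_n$ of the associated $\rho_L$-sequence. Monotonicity $\alpha_n < \alpha_{n+1}$ is equivalent, via the increasing map $\beta \mapsto 2 + \sqrt{\beta} + 1/\sqrt{\beta}$ on $[1,\infty)$, to $\beta_n < \beta_{n+1}$; this one verifies by showing that $f_{n+1}(\beta_n)$ has a definite sign using $f_n(\beta_n) = 0$. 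Passing to the limit in $f_n(x) = 0$ on $\{x > 1\}$, after dividing by $x^{n-1}$ and using the geometric-sum formula, produces the algebraic equation $x(x-1) = (\sqrt{x}+1)^2$; with $y = \sqrt{x}$ this reduces to $y^3 - y^2 - y - 1 = 0$, whose real root is exactly the constant $\omega$ from the statement.

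The hard half is the \emph{exhaustion}: showing that no real number in a gap $(\alpha_n, \alpha_{n+1})$, below the threshold $2 + \omega + \omega^{-1}$, is an $L$-limit point. One argues inductively on $n$. Assuming $\rho_L(G) < \alpha_{n+1}$, Laplacian interlacing applied to the realizing trees $G_n^{(k)}$ forbids certain substructures (long pendant paths attached at designated vertices, or several long branches meeting at a hub), and these are eliminated round by round to force $G$ into a finite parametric family whose only accumulation of $L$-indices occurs at $\alpha_n$ itself, contradicting any hypothetical limit in $(\alpha_n,\alpha_{n+1})$. The principal obstacle is that Laplacian interlacing under induced-subgraph deletion is weaker than its adjacency counterpart --- one only obtains a one-step shift --- so the forbidden-subgraph dictionary must be built with more delicate tools, namely edge-subdivision lemmas and monotonicity under pendant addition; the nested combinatorial bookkeeping needed to faithfully imitate Hoffman in the Laplacian setting is by far the most labor-intensive part of the argument.
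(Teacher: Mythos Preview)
The paper does not contain a proof of this theorem at all: it is a survey, and Theorem~\ref{GLlimit} is simply quoted from Guo's original article \cite{guo-lim} without any argument reproduced here. There is therefore no ``paper's own proof'' against which to compare your proposal.

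As for the proposal itself, it is a reasonable high-level outline of Guo's actual strategy: the realization half via caterpillar-type trees and transfer-matrix recursions is indeed how the $\alpha_n$ are shown to be $L$-limit points, and your derivation of the limiting cubic $y^3-y^2-y-1=0$ with root $\omega$ is correct. But the text you have written is a plan, not a proof; every step (``one would exhibit'', ``one argues inductively'', ``forbidden-subgraph dictionary must be built'') is deferred. In particular, the exhaustion half requires a concrete structural reduction---showing that any connected graph $G$ with $\rho_L(G)$ below the threshold must be a tree of bounded shape apart from one growing path---and you have not supplied this. Your remark that Laplacian interlacing is ``weaker'' than adjacency interlacing is also somewhat misleading: for trees the $L$- and $Q$-spectra coincide, and the usual reduction proceeds by first restricting to trees (since non-tree graphs contain cycles and hence have $\rho_L\geqslant 4$), after which the argument runs closely parallel to Hoffman's. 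If you intend this as a proof rather than a sketch, the structural reduction and the explicit family of trees realizing each $\alpha_n$ need to be written out.
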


Until now, the $L$-limit points of $L$-spectral radius of graphs no less than $2+\omega+\omega^{-1}$ have not been investigated. 
We here propose a conjecture. If true, it would be the Laplacian-counterpart of Shearer's result on $A$-limit points.

\begin{conj}
For any $\sigma \geq 2+\omega+\omega^{-1}$, there exists  a sequence of graphs $\{G_k\, |\, k\in \mathbb{N}\}$ such that $\lim\limits_{k \rightarrow \infty}\rho_L(G_k) = \sigma$.
\end{conj}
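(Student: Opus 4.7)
The plan is to adapt Shearer's argument underlying Theorem~\ref{she-lim} to the Laplacian setting. Shearer encoded, for each $\lambda \geq \sqrt{2+\sqrt{5}}$, a one-sided infinite caterpillar whose adjacency operator on $\ell^{2}$ has spectral radius exactly $\lambda$; finite truncations then produce the required strictly increasing sequence of $A$-spectral radii converging to $\lambda$. I would replicate this template for $L$, working with caterpillars $T(a_{1}, \ldots, a_{n})$ whose central path $v_{1}v_{2}\cdots v_{n}$ carries $a_{i}$ pendant leaves at $v_{i}$, and passing to one-sided infinite analogues $T_{\mathbf{a}}$ parameterized by patterns $\mathbf{a} = (a_{i})_{i \in \N}$ of non-negative integers.

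The eigenvalue equation for $L(T_{\mathbf{a}})$, viewed as a bounded self-adjoint operator on $\ell^{2}(V(T_{\mathbf a}))$, reduces when restricted to path vertices to a three-term transfer recursion whose coefficients depend on the spectral parameter $\mu$ and on the local attachment numbers $a_{i}$. The operator spectrum of $L(T_{\mathbf a})$ is the set of real $\mu$ for which this recursion admits a bounded positive solution, and its supremum is $\rho_L(T_{\mathbf{a}})$. The core claim to be established is that as $\mathbf{a}$ varies over admissible patterns, the map $\mathbf{a} \mapsto \rho_L(T_{\mathbf{a}})$ is continuous in the product topology, equals $2+\omega+\omega^{-1}$ on a well-chosen \emph{minimal} pattern (matching Guo's Theorem~\ref{GLlimit} in the infinite limit), and is unbounded. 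An intermediate-value argument then realizes every $\sigma \geq 2+\omega+\omega^{-1}$ as some $\rho_L(T_{\mathbf{a}_{\sigma}})$. Finally, letting $G_{k}$ be the caterpillar truncation retaining the first $k$ path vertices of $T_{\mathbf a_\sigma}$, strict Perron--Frobenius monotonicity under proper connected subgraph inclusion guarantees that $\rho_L(G_{k})$ is strictly increasing with limit $\sigma$, yielding the pairwise distinctness required by the definition of $L$-limit point.

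The main obstacle is the density statement itself. In the adjacency case, Shearer's transfer equation has constant coefficients away from pendant attachments, which makes the recursion amenable to an elegant M\"obius-iteration analysis. In the Laplacian setting, by contrast, every vertex degree---hence every diagonal entry of $L$---varies with the local pattern, so the transfer recursion has coefficients that jump at each position. Establishing continuity of $\rho_L(T_{\mathbf{a}})$ in $\mathbf{a}$ and ruling out gaps in the achievable values therefore demands a more delicate spectral analysis of the associated Jacobi-type operator. Identifying the threshold $2+\omega+\omega^{-1}$ as the sharp infimum of $\rho_L$ over the nontrivial infinite caterpillars is also nontrivial, and would presumably have to be proved in tandem with an infinite-graph reinterpretation of the upper-bound half of Theorem~\ref{GLlimit}. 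A possible shortcut worth exploring is the identity $\rho_L(T) = \rho_A(\mathcal{L}(T)) + 2$ for trees $T$ (via bipartiteness and the incidence factorization $B B^{\!\top}\!= Q$, $B^{\!\top}\! B = 2I+A(\mathcal{L}(T))$), which would reduce the problem to deciding which $A$-limit points of Shearer's family are actually realized by line graphs of trees.
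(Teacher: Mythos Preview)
The paper contains \emph{no proof} of this statement: it is explicitly labelled a conjecture, introduced with the words ``Until now, the $L$-limit points of $L$-spectral radius of graphs no less than $2+\omega+\omega^{-1}$ have not been investigated. We here propose a conjecture.'' There is therefore nothing to compare your proposal against; you are attempting to settle an open problem, not to reproduce an existing argument.

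As a proof, what you have written is a strategy rather than an argument, and you are candid about this yourself. The two load-bearing claims---continuity of $\mathbf a\mapsto\rho_L(T_{\mathbf a})$ in the product topology, and identification of $2+\omega+\omega^{-1}$ as the infimum over nontrivial infinite caterpillars---are asserted but not proved, and you correctly flag the second as nontrivial. Until those are established, the intermediate-value step has no traction. The monotonicity step for the truncations is fine because your $G_k$ are trees, so $\rho_L(G_k)=\rho_Q(G_k)$ and Perron--Frobenius for $Q$ applies; note, however, that Laplacian spectral radius is \emph{not} monotone under subgraph inclusion in general, so it matters that you stay within trees.

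On the proposed shortcut $\rho_L(T)=\rho_A(\mathcal L(T))+2$: the identity is correct, but it does not reduce the problem to Shearer's theorem. Shearer realises every $\lambda\geq\sqrt{2+\sqrt5}$ via caterpillars, and any caterpillar with a vertex of degree at least $3$ contains an induced $K_{1,3}$, whereas line graphs are $K_{1,3}$-free. So Shearer's witnesses are never line graphs of trees, and you would still need a separate density argument for $\{\rho_A(\mathcal L(T)):T\text{ a tree}\}$ in $[\omega+\omega^{-1},\infty)$. That is not obviously easier than the original question.
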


Graphs with relatively small $L$-spectral radius  were independently studied by several scholars. 
Omidi \cite{omi-4,omi-5} characterized the connected graphs whose $L$-index does not exceed $(5+\sqrt{13})/2$ (note that $2 +  \omega+\omega^{-1} = 4.38+ > (5+\sqrt{13})/2 =4.302+$). Later on, Simi\'c, Huang, Belardo and the first author of this paper \cite{WB-L4} studied the spectral determination of disjoint union of graphs with $L$-index at most $4$.
 Wang, Belardo and Huang \cite{WB-L4.5} also characterized the graphs whose $L$-index lies either in $[4,2+\sqrt{5}]$, $(2+\sqrt{5},2+\omega+\omega^{-1}]$ or $(2+\omega+\omega^{-1},4.5]$. Next theorem outlines in more details the results summarized in this paragraph. 
The last sentence in its statement  depends on results concerning the $Q$-index recalled in Section 4 and proved in \cite{WB-L4.5}.
\begin{thm}\label{Lindex4.5} Let $\tau_1=2+\sqrt{5}$ and $\tau_2=2+\omega+\omega^{-1}$.  The following equalities of sets hold. 
\vspace{.6em}
\begin{itemize}
\item[$\mathrm{(i)}$]{\rm \cite{omi-4,WB-L4}}  $\mathcal{G}^{< 4}_L = \{ P_n, C_{2n+1} \mid n \in \N \}$.\\[-.65em]
\item[$\mathrm{(ii)}$]{\rm \cite{omi-4,WB-L4}}
$\mathcal{G}^{4}_L = \{ K_{1,3},K_{1,3}+e,K_4-e,K_4 \} \cup \{C_{2k} \mid k \geq 2\}$.\\[-.65em]
\item[$\mathrm{(iii)}$]{\rm \cite{WB-L4.5}}
$\mathcal{G}^{> 4}_L \cap \mathcal{G}^{\leqslant  \tau_1}_L = \{T_{1,1,n-3}, \;
L_n \mid n \geq 5\}$;\\[-.65em]
\item[$\mathrm{(iv)}$]{\rm \cite{WB-L4.5}}
$\mathcal{G}^{>  \tau_1}_L \cap \mathcal{G}^{\leqslant \tau_2}_L = \bigcup_{i=1}^5 \mathcal U_i$, where
$$ \mathcal U_1 = \{ B_1, B_2\}; \qquad \mathcal U_2 = \{ L_{2k+2,1} \mid k \geq 2\}; \qquad  \mathcal U_3 = \{ T_{1,b,c} \mid c \geq b \geq 2\};
$$
$$ \mathcal U_4 = \{ Q_{a,b,c} \mid b \geq a+c+1\};  \qquad \mathcal U_5 = \{  W_n, D_n-xy, D_n \mid n \geq 8\}.
$$
\end{itemize}

\vspace{.8em} \noindent
Moreover, the set $\mathcal{G}^{> \tau_2}_L \cap \mathcal{G}^{\leqslant  4.5}_L$
just contains open and closed quipus.

\end{thm}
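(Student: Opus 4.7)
The overall strategy combines explicit spectral computation for the graphs listed in (i)--(iv) with structural exclusion arguments built on two standard tools: edge-addition monotonicity $\rho_L(H)\leqslant\rho_L(G)$ whenever $H$ is a spanning subgraph of $G$, and the bound $\rho_L(G)\geqslant\Delta(G)+1$ for any graph with an edge. Since $\tau_2<4.5<5$, this degree bound immediately restricts attention to subcubic graphs in every regime of the theorem.

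For the verification side: the $L$-spectra of $P_n$ and $C_n$ are the trigonometric values $2-2\cos(\pi k/n)$ and $2-2\cos(2\pi k/n)$, handling the path and cycle contributions to (i) and (ii) at once. The sporadic graphs $K_{1,3}$, $K_{1,3}+e$, $K_4-e$, $K_4$, $B_1$, $B_2$ are direct computations. For the parametric families $T_{a,b,c}$, $Q_{a,b,c}$, $L_n$, $L_{2k+2,1}$, $W_n$, $D_n$, $D_n-xy$ I would write the $L$-characteristic polynomial via a standard pendant-path/rooted-tree recurrence and check that $\rho_L$ lies in the claimed interval for the stated parameter ranges.

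For the exclusion side of (i)--(iv): in (i), $\rho_L<4$ forces $\Delta\leqslant 2$, leaving only paths and odd cycles. In (ii), a graph with $\Delta=3$ contains $K_{1,3}$ as a subgraph so $\rho_L\geqslant 4$, and equality plus a short case analysis produces the four listed sporadic graphs; combined with the even cycles this gives (ii). For (iii)--(iv), I enumerate subcubic trees of $T_{a,b,c}$ and $Q_{a,b,c}$ type together with subcubic unicyclic and bicyclic graphs; the characteristic polynomials pin down precisely which parameters land in $(4,\tau_1]$ or $(\tau_1,\tau_2]$, and any remaining subcubic candidate is killed by locating an induced subgraph whose $L$-index already exceeds $\tau_2$.

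The final claim, that $\mathcal{G}^{>\tau_2}_L\cap\mathcal{G}^{\leqslant 4.5}_L$ consists solely of open and closed quipus, is the main obstacle. Unlike Theorem~\ref{woo}, daggers and sufficiently long $T$- or $Q$-shape trees must now be excluded, because their $L$-indices surpass $4.5$ once they grow---this will be verified by evaluating the $L$-characteristic polynomial at $4.5$ via a Schwenk-type rooted-tree formula. My plan here is to reduce to the $Q$-index classification in Section~4 proved in \cite{WB-L4.5}: for bipartite $G$ we have $\rho_L(G)=\rho_Q(G)$, so the $Q$-result applies verbatim; for non-bipartite subcubic $G$, the inequality $\rho_L(G)\leqslant\rho_Q(G)$ combined with the structural fact that subcubic connected non-bipartite graphs with small enough $Q$-index are precisely closed quipus with an odd cycle finishes the job. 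The technical crux is producing polynomial bounds sharp enough to separate the quipu families from near-miss $T$- and $Q$-shape trees inside the narrow window $(\tau_2,4.5]$.
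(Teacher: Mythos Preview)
The paper itself does not prove Theorem~\ref{Lindex4.5}; this is a survey statement quoting \cite{omi-4,WB-L4,WB-L4.5}, with only the remark that the last sentence ``depends on results concerning the $Q$-index recalled in Section~4 and proved in \cite{WB-L4.5}''. There is no in-paper argument to compare against, so your sketch has to stand on its own.

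For parts (i)--(iv) your outline is reasonable and follows the standard route. One small point: Laplacian monotonicity in fact holds for \emph{arbitrary} subgraphs, not only spanning ones, since $L(G)=\sum_{e\in E(G)}L_e$ with each edge-Laplacian $L_e\succeq 0$; you implicitly rely on this stronger version when you later ``locate an induced subgraph whose $L$-index already exceeds $\tau_2$'', so you should state it that way from the start.

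There is, however, a genuine gap in your handling of the final quipu assertion for non-bipartite $G$. You invoke $\rho_L(G)\leqslant\rho_Q(G)$ and then the $Q$-index classification of Theorem~\ref{Qindex4}, but the inequality points the wrong way: from $\rho_L(G)\leqslant 4.5$ you obtain no upper bound whatsoever on $\rho_Q(G)$, so Theorem~\ref{Qindex4} cannot be applied. The bipartite half of your reduction works precisely because $\rho_L=\rho_Q$ there; for the non-bipartite half you must argue directly. The fix is a forbidden-subgraph analysis for $\rho_L$ itself: show that every connected subcubic graph which is neither an open nor a closed quipu contains one of finitely many small configurations $H$ with $\rho_L(H)>4.5$, and then conclude via the subgraph monotonicity of $\rho_L$. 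That is presumably what \cite{WB-L4.5} carries out; in any case the shortcut through $\rho_Q$ is not available for non-bipartite graphs.
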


\section{Signless Laplacian matrix}

Inspired by Hoffman's theorem and Guo's Theorem \ref{GLlimit}, the first authors of this paper et al.\  \cite{wang-lim} 
determined the $Q$-limit points smaller than $2+\varepsilon$, where $\varepsilon=\frac{{\;}1{\;}}{3}\left((54 - 6\sqrt{33})^{\frac{{\;}1{\;}}{3}} + (54 + 6\sqrt{33})^{\frac{{\;} 1{\;}}{3}} \right)$. Note that $\varepsilon=\omega + \omega^{-1} =2.38+$, where $\omega$ is the number defined Section~3. The $L$-limit points and the $Q$-limits points less than $2+\varepsilon$ are the same. This is not surprising, since 
the proof of  \cite[Theorem~3.1]{wang-lim} consists in a reduction to trees, and it is well-known the $L$- and $Q$-spectra of bipartite graphs are equal.

\begin{thm}{\rm \cite{wang-lim}}\label{Qlimit}
Let $\beta_0 = 1$ and $\beta_n (n \geq 1)$ be the largest positive root of $$f_n(x) = x^{n+1}-(1+x+\cdots+x^{n-1})(\sqrt{x}+1)^2.$$ Let
$\alpha_n = 2+ \beta_n^{\frac{1}{2}} + \beta_n^{\frac{-1}{2}}$. Then
$$4 = \alpha_0 < \alpha_1 < \alpha_2 < \cdots$$ are all the limit points
of the $L$-index and  the $Q$-index of graphs less than {\small $\lim\limits_{n\rightarrow\infty}\alpha_n=2+\varepsilon$},
where $\varepsilon = \frac{{\;}1{\;}}{3}\left((54 - 6\sqrt{33})^{\frac{{\;}1{\;}}{3}} + (54 + 6\sqrt{33})^{\frac{{\;} 1{\;}}{3}} \right) = 2.38+$.
\end{thm}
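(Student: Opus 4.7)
The plan revolves around the classical identity that for every bipartite graph $G$ (in particular, every tree), $L(G)$ and $Q(G)$ are similar via the diagonal $\pm 1$ matrix $S$ encoding the bipartition: $S^{-1}Q(G)S=L(G)$. Hence $\mathrm{Sp}_L(G)=\mathrm{Sp}_Q(G)$ and, in particular, $\rho_L(G)=\rho_Q(G)$ whenever $G$ is bipartite. This identity is what turns Guo's theorem for $L$ into the desired one for $Q$, provided one can reduce all limit-producing sequences to trees.

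First I would verify that each $\alpha_n$ is a $Q$-limit point. Guo's proof of Theorem~\ref{GLlimit} produces, for every $n\in\N\cup\{0\}$, an explicit sequence of trees $\{T_k^{(n)}\}_{k\in\N}$ whose $L$-indices are pairwise distinct and converge to $\alpha_n$. Since trees are bipartite, the bipartite equivalence gives $\rho_Q(T_k^{(n)})=\rho_L(T_k^{(n)})\to\alpha_n$ with distinct $\rho_Q$-values, so each $\alpha_n$ is a $Q$-limit point below $2+\varepsilon$.

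Next I would prove the converse. Let $\{G_k\}$ be a sequence of connected graphs with pairwise distinct $\rho_Q(G_k)\to\sigma<2+\varepsilon$; the goal is $\sigma\in\{\alpha_n\}$. The strategy is to reduce to a sequence of trees with the same $\rho_Q$-limit and then invoke Guo's theorem on the corresponding $L$-indices. Three structural ingredients drive the reduction: (i) the classical bound $\rho_Q(G)\geqslant\Delta(G)+1$, valid for any connected graph with at least one edge, which together with $\sigma<2+\varepsilon<4.4$ forces $\Delta(G_k)\leqslant 3$ eventually; (ii) subgraph monotonicity of $\rho_Q$, a consequence of the PSD identity $Q(G)-Q(H)=\sum_{e\in E(G)\setminus E(H)}(\mathbf{e}_u+\mathbf{e}_v)(\mathbf{e}_u+\mathbf{e}_v)^{\!\top}$ together with Cauchy interlacing for vertex-deletion; (iii) a finite list of forbidden non-tree (in particular, non-bipartite) small subgraphs whose $Q$-spectral radius already meets or exceeds $2+\varepsilon$. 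Combining (i)--(iii), the graphs $G_k$ are forced, from some point on, into a tree-like regime where a further surgery (collapsing pendant structures, cutting a short odd cycle, and the like) produces a tree sequence with asymptotically the same $\rho_Q$; on that tree sequence $\rho_Q=\rho_L$ and Guo's Theorem~\ref{GLlimit} yields $\sigma=\alpha_n$.

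The main obstacle will be step (iii), namely compiling the forbidden non-tree subgraphs and carrying out the surgery that replaces a non-bipartite element of the sequence by a bipartite one without shifting the limit. Unlike the Laplacian, the signless Laplacian genuinely distinguishes bipartite from non-bipartite structure, and a careful case analysis is required to rule out non-bipartite limit-producing sequences below $2+\varepsilon$. This structural analysis is the technical heart of \cite{wang-lim}; once carried through, the rest of the argument is an immediate transfer via the bipartite equivalence.
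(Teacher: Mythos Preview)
Your proposal is correct and follows essentially the same approach the paper attributes to \cite{wang-lim}: the paper explicitly notes that ``the proof of \cite[Theorem~3.1]{wang-lim} consists in a reduction to trees, and it is well-known the $L$- and $Q$-spectra of bipartite graphs are equal,'' which is precisely your strategy of (a) transferring Guo's tree sequences to the $Q$-setting via the bipartite similarity $S^{-1}Q(G)S=L(G)$, and (b) reducing an arbitrary limit-producing sequence below $2+\varepsilon$ to trees through degree and forbidden-subgraph arguments. You are also right to flag the non-bipartite reduction (your step~(iii)) as the technical heart that must be filled in from \cite{wang-lim}.
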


So far, the $Q$-limit points of the $Q$-index which are not less than $2+\varepsilon$ have not yet been identified. As for the correspondent problem in the $L$-context, we state the following conjecture.

\begin{conj}
For any $\theta \geq 2+\varepsilon$,  there exists a sequence of graphs $\{G_k\, |\, k\in \mathbb{N}\}$ such that $\lim\limits_{k \rightarrow \infty}\rho_Q(G_k) = \theta$.
\end{conj}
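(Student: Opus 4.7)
The plan is to adapt Shearer's strategy for the adjacency matrix (Theorem~\ref{she-lim}) to the signless Laplacian setting. The central reduction is the classical identity $\rho_Q(G) = \rho_L(G)$ for bipartite $G$, so it suffices to restrict attention to trees; a dense set of $Q$-spectral radii of trees inside $[2+\varepsilon,+\infty)$ automatically yields the conjecture, as one can always extract a subsequence with pairwise distinct values.

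First I would consider caterpillars $T_{n,\sigma}$ built from a spine $v_1 v_2 \cdots v_n$ by attaching $\sigma_i$ pendant vertices at $v_i$, where $\sigma = (\sigma_1,\ldots,\sigma_n)$ is a prescribed integer sequence. Using the standard Laplacian recursion along a pendant edge together with a two-term transfer-matrix recurrence along the spine, one obtains a quasi-periodic recurrence for the $Q$-characteristic polynomial. This machinery gives quantitative control over $\rho_Q(T_{n,\sigma})$ in terms of $\sigma$ and $n$, and in particular it lets one take $n\to\infty$ with $\sigma$ asymptotically periodic or quasi-periodic.

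Next I would establish a three-step control scheme. First, for every real $\theta > 2+\varepsilon$ I would construct an ``itinerary'' $\sigma^{(\theta)}$ (a rule for where to place pendants along the spine) such that $\rho_Q(T_{n,\sigma^{(\theta)}})$ converges to $\theta$ as $n\to\infty$; the idea is to solve, for the spectral parameter $\theta$, the discrete dynamical system coming from the transfer matrix and to read off $\sigma^{(\theta)}$ from its orbits, mirroring Shearer's itinerary construction. Second, the boundary value $\theta = 2+\varepsilon$ is already covered by Theorem~\ref{Qlimit} via the trivial itinerary. Third, one must upgrade mere convergence to the statement that the values $\rho_Q(T_{n,\sigma^{(\theta)}})$ are pairwise distinct, which can be arranged by a mild perturbation of $\sigma^{(\theta)}$ combined with a monotonicity argument for adding pendants.

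The hard part will be the dynamical-systems step, namely converting the spectral parameter $\theta$ into an itinerary $\sigma^{(\theta)}$ and proving that the associated transfer-matrix product is uniformly bounded and yields the required spectral convergence. In the adjacency case this rests on the fact that once $\rho_A > 2$ the Perron eigenvector decays geometrically along a pendant path, so the transfer matrices become contractive; the $Q$-counterpart requires an analogous decay once $\rho_Q > 4$. Since the entire target range $[2+\varepsilon,+\infty)$ satisfies $2+\varepsilon > 4$, such a decay is expected to hold, but making it uniform and effective, and coupling it with the construction of $\sigma^{(\theta)}$, is where the bulk of the technical effort is expected to lie.
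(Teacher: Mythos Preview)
The statement you are trying to prove is presented in the paper as an \emph{open conjecture}; the paper gives no proof of it. So there is no ``paper's own proof'' to compare against, and your proposal should be assessed on its own merits as an attack on an open problem.

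Your strategy---restricting to trees (so that $\rho_Q=\rho_L$) and adapting Shearer's caterpillar/itinerary construction to the signless Laplacian---is natural and, as far as I can see, not obstructed by anything obvious. The transfer-matrix picture you sketch is correct: at a spine vertex carrying $\sigma_i$ pendants the $Q$-eigenvector recurrence reads
\[
\Bigl(\theta-2-\tfrac{\sigma_i\,\theta}{\theta-1}\Bigr)\,x_{v_i}=x_{v_{i-1}}+x_{v_{i+1}},
\]
which is structurally identical to Shearer's $A$-recurrence $(\lambda-\sigma_i/\lambda)\,x_{v_i}=x_{v_{i-1}}+x_{v_{i+1}}$, just with a different one-parameter family of coefficients. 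Since $\theta>4$ throughout the target range, the ``no-pendant'' coefficient $\theta-2$ exceeds $2$, giving the hyperbolic regime (geometric decay along bare path segments) that Shearer's argument needs.

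That said, what you have written is a plan, not a proof. The genuine work---showing that for every $\theta\geqslant 2+\varepsilon$ the associated circle-map/itinerary construction produces a well-defined sequence of caterpillars whose $Q$-spectral radii actually converge to $\theta$, and that the convergence is not eventually constant---is exactly the part you flag as hard, and it is not a triviality. In Shearer's paper this step occupies essentially the entire argument and relies on delicate monotonicity and interlacing facts specific to the coefficient $\lambda-\sigma/\lambda$; you would need to redo that analysis for the coefficient $\theta-2-\sigma\theta/(\theta-1)$, checking in particular that the range of attainable coefficients is wide enough to hit every target in $[2+\varepsilon,\infty)$. Until that is carried out, the proposal remains a credible outline rather than a proof.
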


The graphs with $Q$-index not exceeding $2+\varepsilon$ were gradually characterized in \cite{cve1,wangsl1,BW-Conj}. More in details, Cvetkovi\'c, Rowlinson and Simi\'c \cite{cve1} characterized the graphs in $\mathcal{G}^{\leqslant  4}_Q$. Afterwards, Wang et al. \cite{wangsl1}  started to describe the graphs in  $\mathcal{G}^{>  4}_Q \cap  \mathcal{G}^{\leqslant  2+\sqrt{5}}_Q$ and in  $\mathcal{G}^{>  2+\sqrt{5}}_Q \cap  \mathcal{G}^{\leqslant  2+\varepsilon}_Q$
Their work was brought to completion two years later by Belardo et al. in \cite{BW-Conj}. Finally, Wang et al. \cite{wangsl1} gave the structure of graphs in $\mathcal{G}^{>  2+\varepsilon}_Q \cap  \mathcal{G}^{\leqslant  4.5}_Q$.

\begin{thm}\label{Qindex4} Let $\tau_1=2+\sqrt{5}$ and $\tau_2=2+\varepsilon$.
The following equalities of sets hold. 
\vspace{.6em}
\begin{itemize}
\item[$\mathrm{(i)}$]{\rm \cite{cve1}}
$\mathcal{G}^{< 4}_Q= \{ P_n \mid n \in \N \}$.\\[-.65em]
\item[$\mathrm{(ii)}$]{\rm \cite{cve1}}
$\mathcal{G}^{4}_Q= \{ K_{1,3}, C_n \mid n \geqslant 3 \}$;\\[-.65em]
\item[$\mathrm{(iii)}$]{\rm \cite{wangsl1}}
$\mathcal{G}^{> 4}_Q \cap \mathcal{G}^{\leqslant  \tau_1}_Q = \{T_{1,1,n-3} \mid n \geq 5\}$;\\[-.65em]
\item[$\mathrm{(iv)}$]{\rm \cite{wangsl1,BW-Conj}}
$\mathcal{G}^{> \tau_1}_Q \cap \mathcal{G}^{\leqslant  \tau_2}_Q = \{ T_{1,b,c} \mid c \geq b \geq 2\} \cup \{Q_{a,b,c} \mid b \geq a+c+1\}$.
\end{itemize}

\vspace{.8em} \noindent
Moreover, the set $\mathcal{G}^{> \tau_2}_Q \cap \mathcal{G}^{\leqslant  4.5}_Q$
just contains open and closed quipus.
\end{thm}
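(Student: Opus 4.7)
The plan is to exploit the identity ${\rm Sp}_L(G) = {\rm Sp}_Q(G)$ valid for bipartite $G$, so that Theorem~\ref{Lindex4.5} does most of the heavy lifting, and then to separately dispose of the non-bipartite cases.

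First, I would verify that each listed graph lies in the announced spectral range. The trees $P_n$, $T_{1,1,n-3}$, $T_{1,b,c}$ and $Q_{a,b,c}$ are bipartite, so their $Q$-indices coincide with their $L$-indices and the required memberships follow directly from parts (i), (iii) and (iv) of Theorem~\ref{Lindex4.5}. Even cycles are bipartite, hence $\rho_Q(C_{2k}) = \rho_L(C_{2k}) = 4$; for odd cycles, $2$-regularity gives $Q(C_{2k+1}) = 2I + A(C_{2k+1})$, so $\rho_Q(C_{2k+1}) = 2 + 2 = 4$. A direct three-by-three calculation yields $\rho_Q(K_{1,3}) = 4$.

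For the reverse direction I would use the classical lower bound $\rho_Q(G) \geq \Delta(G) + 1$, valid for every connected graph with at least one edge (obtainable by applying Cauchy interlacing to the principal submatrix of $Q(G)$ supported on a maximum-degree vertex and its neighbours, which dominates $Q(K_{1,\Delta})$ in the positive-semidefinite order). Since $\tau_2 < 4.5$, any connected $G$ with $\rho_Q(G) \leq \tau_2$ satisfies $\Delta(G) \leq 3$. When $G$ is bipartite, the spectrum identity reduces matters to Theorem~\ref{Lindex4.5}, whose tree entries in the relevant ranges yield precisely the families claimed in (i), (iii), (iv), together with the even cycles in (ii). When $G$ is non-bipartite, $G$ contains an odd cycle $C$; if $G = C$ then $G$ appears in (ii); otherwise $G$ properly contains $C$ as a spanning subgraph, so $G$ contains an induced tadpole (odd cycle with a pendant path) or a theta-type subgraph.

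The main obstacle is then to show that every non-bipartite connected graph with $\Delta \leq 3$ which is not itself an odd cycle already satisfies $\rho_Q(G) > \tau_2$. I would dispose of this via a short list of ``minimal'' non-bipartite configurations---the smallest tadpoles, the smallest theta-graphs, and a few sporadic cases---computing or estimating $\rho_Q$ for each by a well-chosen test vector or an equitable-partition quotient matrix, and then extending to all larger non-bipartite non-cycle members by subgraph monotonicity $Q(H) \preceq Q(G)$ for spanning subgraphs $H$. This is precisely the computational engine developed in \cite{cve1,wangsl1,BW-Conj}. The concluding ``moreover'' clause for $\rho_Q \in (\tau_2, 4.5]$ is obtained by the same two-pronged strategy at the next threshold: the bipartite case reduces via the $L$-analogue in Theorem~\ref{Lindex4.5} to open and closed quipus, and the same explicit estimates rule out the remaining non-bipartite candidates.
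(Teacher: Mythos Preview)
The paper does not itself prove Theorem~\ref{Qindex4}; it is a survey statement collecting results from \cite{cve1,wangsl1,BW-Conj}, so there is no ``paper's own proof'' to compare against. Your outline must therefore stand on its own, and it has two genuine problems.

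First, the reduction runs in the wrong direction. You invoke Theorem~\ref{Lindex4.5} to carry the bipartite case, but the paper explicitly notes (in the paragraph introducing Theorem~\ref{Lindex4.5}) that the last sentence of that theorem \emph{depends on results concerning the $Q$-index recalled in Section~4}; chronologically, \cite{wangsl1} and \cite{BW-Conj} predate \cite{WB-L4.5}. So using the $L$-classification to establish the $Q$-classification is circular unless you supply an independent proof of Theorem~\ref{Lindex4.5}, which amounts to redoing everything directly.

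Second, and more seriously, your ``subgraph monotonicity'' plan for the non-bipartite case does not work as stated. Consider the odd tadpoles $L_{2k+1}$ (an odd cycle $C_{2k+1}$ with a single pendant edge). No two members of this family are comparable under subgraph inclusion, so there is no finite list of ``minimal'' configurations whose $\rho_Q$ bounds the rest from below. Worse, the cycle in $L_{2k+1}$ is an internal path of type~I, so by the $Q$-analogue of Proposition~\ref{alpha-internal}(iii) the sequence $\rho_Q(L_{2k+1})$ is strictly \emph{decreasing} in $k$: computing $\rho_Q$ for the paw and a handful of small tadpoles tells you nothing about large $k$. To exclude these graphs from $(4,\tau_2]$ you must control $\lim_{k\to\infty}\rho_Q(L_{2k+1})$ and show it is at least $\tau_2$ --- which is precisely a limit-point computation of the kind the cited papers carry out. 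The same obstruction recurs for every non-bipartite family obtained by lengthening an internal path, and it also undermines your treatment of the ``moreover'' clause: odd-cycle closed quipus with $\rho_Q\in(\tau_2,4.5]$ genuinely exist (e.g.\ $C_5$ with a pendant edge has $\rho_Q\approx 4.43$), so they cannot simply be ``ruled out''.
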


\section{Hermitian adjacency matrix}

Liu and Li \cite{liu-li} and Guo and Mohar \cite{mohar3} independently introduced the Hermitian adjacency matrix $\mathcal{H}(G)$ associated to  a mixed graph $G$.

We recall that a mixed graph $G$ consists of a vertex set $V=V(G)$ and an arc set $$\vv{E}(G) \subseteq (V \times V)\setminus \{(v,v) \mid v \in V\}.$$ Note that if an arc $uv=(u,v)$ belongs to  $\vv{E}(G)$, then the arc $vu$ may or may not belong to $\vv{E}(G)$. A {\em digon}  $\{u,v\}$ is determined by every pair of arcs $uv$ and $vu$ both belonging to $\vv{E}(G)$, and can be regarded as an undirected edge connecting $u$ and $v$.
Mixed graphs are also called {\it directed graphs} or {\it digraphs} in \cite{guo-mohar-dm}. The underlying graph of a digraph $G$ is the graph with vertex-set $V$ and edge-set
$E = \{ \{x, y\} \mid xy \in  \vv{E}(G) \; \text{or} \; yx \in \vv{E}(G) \}$.

The entries $\mathcal{H}_{uv}$ of the Hermitian adjacency matrix $\mathcal{H}(G) \in \mathbb{C}^{|V|\times|V|}$  are as follows:
$$\mathcal{H}_{uv} =
\begin{cases}
\phantom{-}1 \quad \text{if $uv$ and $vu \in \vv{E}(G)$};\\
\phantom{-}i  \quad \text{if $uv\in \vv{E}(G)$ and $vu \not\in \vv{E}(G)$};\\
-i \quad \text{if $uv \not\in \vv{E}(G)$ and $vu \in \vv{E}(G)$};\\
\phantom{-}0   \quad \text{otherwise}.
\end{cases}
$$
Spectral properties of the $\mathcal{H}$-matrix have been investigated in \cite{chen1,chen2,mohar2,guo-mohar-dm,hu-li-liu,mohar1,tian,wang-yuan1,wang-yuan,wis-dam}. 
Let $uv$ an arc in $\vv{E}(G)$ such that $vu \not\in \vv{E}(G)$. `Reversing the direction (or the orientation) of an arc $uv$' means to consider the mixed graph $G'$ obtained from $G$ by replacing $uv$ with $vu$ in its arc set. The $\mathcal{H}$-spectrum is preserved if we reverse the direction of all arcs not involved in a digon. The mixed graph obtained in this way is called the {\it converse} of the original one. However, Guo and Mohar \cite{mohar3,mohar1} unveiled a more complicated transformation leaving the $\mathcal{H}$-spectrum unchanged. Suppose that the vertex-set of $G$ is partitioned in four (possibly empty) sets, $V(G) =V_1 \cup V_{-1} \cup V_i \cup V_{-i}$. An arc $xy \in \vv{E}(G)$ is said to be of type $(j, k)$ for $j, k \in \{\pm1, \pm i\}$ if $x \in V_j$ and $y \in V_k$. The partition is {\it admissible} if the following conditions hold:
\begin{itemize}
\item[(a)]
There are no digons of types $(1, -1)$ or $(i, -i)$.\\[-.65em]
\item[(b)]
All edges of types $(1, i)$, $(i, -1)$, $(-1, -i)$, $(-i, 1)$ are contained in digons.\\[-.65em]
\end{itemize}
A {\it four-way switching} with respect to a partition $V(G) =V_1 \cup V_{-1} \cup V_i \cup V_{-i}$ is the operation of changing $G$ into the mixed graph $G'$ by making the following changes:\\[-.65em]
\begin{itemize}
\item[(i)]
reversing the direction of all arcs of types $(1, -1), (-1, 1), (i, -i), (-i, i)$;\\[-.65em]
\item[(ii)]
(replacing each digon of type $(1, i)$ with a single arc directed from $V_1$ to $V_i$ and replacing each digon of type $(-1, -i)$ with a single arc directed from $V_{-1}$ to $V_{-i}$;\\[-.65em]
\item[(iii)]
replacing each digon of type $(1, -i)$ with a single arc directed from $V_{-i}$ to $V_1$ and replacing each digon of type $(-1, i)$ with a single arc directed from $V_i$ to $V_{-1}$;\\[-.65em]
\item[(iv)]
replacing each non-digon of type $(1, -i), (-1, i), (i, 1)$ or $(-i, -1)$ with the digon.\\[-.65em]
\end{itemize}
Two mixed graphs $G_1$ and $G_2$  are {\it switching equivalent} if,  after choosing an admissible partition on their common vertex set, one of them  can be obtained from the other by a suitable sequence of four-way switchings and the operation of taking the converse. It turns out that two switching equivalent mixed graphs have the same $\mathcal{H}$-spectrum. Guo and Mohar \cite{guo-mohar-dm} make use of switching equivalence to list  all connected digraphs with $\mathcal{H}$-index less than 2.

Let $F$ be any forest. We regard $F$ as the mixed graph obtained by replacing each of its edges by a digon.  From this perspective $\mathcal{H}(F)=A(F)$, and $\rho_{\mathcal H}(T)=\rho_A(T)$ for any tree $T$.  All mixed graphs whose underlying graph is a forest $F$ are switching equivalent \cite{mohar3, liu-li}; therefore, their $\mathcal{H}$-spectra are all equal to the $A$-spectrum of $F$.

To describe Guo and Mohar's results, we  still need some extra notation. We denote by $D_n$ the directed cycle on $n$ vertices. The digraph $\widetilde{C_n}$ is obtained from $D_n$ by reversing the direction of one of the directed edges. The digraph $\widetilde{C_n'}$ is the digraph obtained from $D_n$ by replacing one edge with a digon. The digraph $\widetilde{C_n''}$ is the digraph obtained from $D_n$ by taking two consecutive arcs and then replacing the first one by a digon and reversing the direction of the second one.  A {\it quadrangle} is a mixed graph whose underlying graph is $C_4$. A quadrangle is positive if either of the following holds: it has four digons, or it has two digons and the two non-digon arcs are oriented differently with respect to the order on the
cycle $C_4$ (one clockwise and one anticlockwise), or it has no digons and two pairs of oppositely oriented arcs, or it has no
digons and all arcs are oriented in the same direction with respect to the order on the cycle. It is a negative quadrangle if it
has an even number of digons and does not fall into the three cases of positive quadrangles.

Let $a, b, c, d$ be nonnegative integers. Let $\Box_{abcd}$ be a digraph obtained from a  negative quadrangle with (consecutive) vertices $v_1, v_2, v_3, v_4$ by adding directed paths of lengths $a, b, c, d$ that are attached to $v_1,v_2,v_3,v_4$ respectively. For the mixed graphs in the statement of Theorem~\ref{tgufo} and not defined above, see Fig.~3, where each arrow from a vertex $u$ to $v$ in $V(G)$ means that the arc $uv$ belong to $\vv{E}(G)$.

\begin{figure}[h]
\begin{center}
\includegraphics[width=1\textwidth]{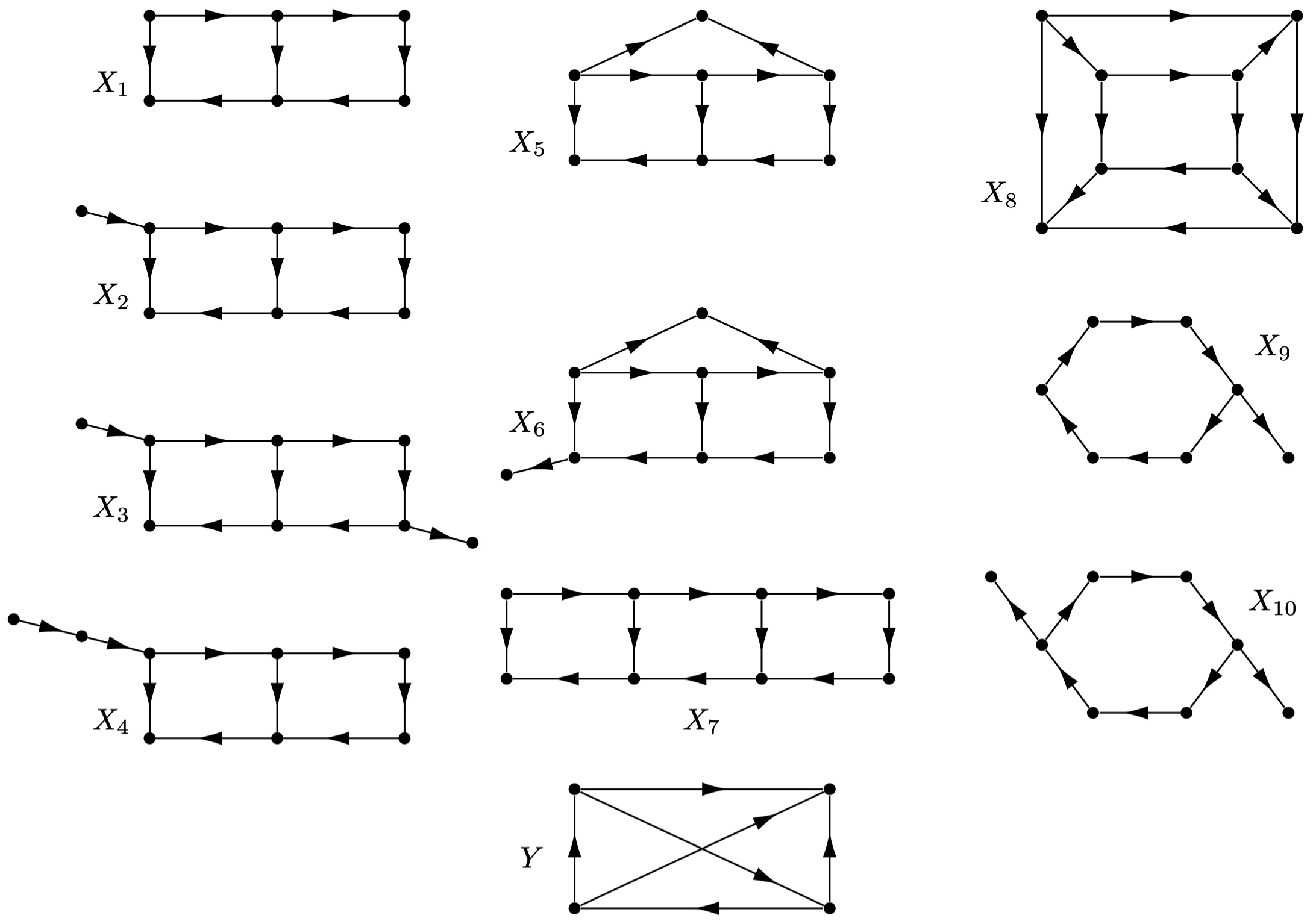}
\caption{\small The mixed graphs $X_i$ for $1 \leqslant i \leqslant 10$ and $Y$.}
\label{ex1}
\end{center}
\end{figure}

\begin{thm}{\rm \cite{guo-mohar-dm}}\label{tgufo}
A connected digraph $G$ has $\rho_{\mathcal{H}}(G) < 2$ if and only if $G$ is switching equivalent to one of the following:

{\small   \begin{multicols}{2}
   \begin{itemize}
        \item[{\rm (a)}] $P_n$;\\[-.65em]
        \item[{\rm (b)}]  $D_n$ for $n \not\equiv 0 \mod 4$;\\[-.65em]
        \item[{\rm(c)}]
 $\widetilde{C_n}$ for $n \not\equiv 2 \mod 4$;\\[-.65em]
        \item[{\rm(d)}] $\widetilde{C_n'}$ $n \not\equiv 3 \mod 4$;\\[-.65em]
        \item[{\rm(e)}]
$\widetilde{C_n''}$ $n \not\equiv 1 \mod 4$;\\[-.65em]
\item[{\rm(f)}]  $T_{a,1,1}$, with $a \in \N$;\\[-.65em]
\item[{\rm(g)}]  $T_{a,2,1}$, with $2 \leqslant a \leqslant 4$;\\[-.65em]
        \item[{\rm(h)}] $X_i$ for $ 1 \leqslant i \leqslant 10$;\\[-.65em]
         \item[{\rm(i)}] $Y$;\\[-.65em]  
         \item[{\rm(j)}]
$\square_{a,0,c,0}$, where $a \geq c \geq 0$;\\[-.65em]
\item[{\rm(k)}]
$\square_{3,1,0,0},\square_{2,1,1,0},\square_{2,1,0,0},$\\
$
\square_{1,1,1,1},\square_{1,1,1,0},\square_{1,1,0,0}$;\\[-.65em]
 \item[{\rm(l)}]
the digraph obtained from the directed triangle $D_3$ by adding a vertex and an arc from this vertex to one of the vertices of $D_3$;\\

    \end{itemize}
    \end{multicols}
    \vspace{-1em}
}
\end{thm}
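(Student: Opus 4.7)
The plan rests on three tools: the switching invariance of the $\mathcal{H}$-spectrum recalled immediately before the theorem; Cauchy interlacing for Hermitian principal submatrices, which gives $\rho_{\mathcal{H}}(H)\leqslant\rho_{\mathcal{H}}(G)$ for every induced sub-digraph $H$ of $G$; and Smith's theorem (Theorem~\ref{22+5}(i)--(ii)) classifying undirected graphs with $\rho_A\leqslant 2$.

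First, I would dispose of the case when the underlying graph of $G$ is a forest. As recalled in the excerpt, any such $G$ is switching equivalent to its underlying forest $F$, so $\rho_{\mathcal{H}}(G)=\rho_A(F)$, and Smith's theorem then produces exactly the connected cases (a), (f), (g). The same theorem forces the underlying graph of any $G$ with $\rho_{\mathcal{H}}(G)<2$ to be subcubic, since $K_{1,4}$ has $\rho_A=2$.

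Second, I would handle the case when the underlying graph is a single cycle $C_n$. The Hermitian spectrum then depends only on $n$ and on the gauge-invariant product of arc weights around the cycle. A direct calculation for the circulant $\mathcal{H}(D_n)=i(P-P^{\top})$, with $P$ the cyclic shift, yields eigenvalues $-2\sin(2\pi k/n)$, so $\rho_{\mathcal{H}}(D_n)<2$ precisely when $n\not\equiv 0\pmod 4$; analogous perturbed-circulant computations, where replacing an arc by a digon or by a reversed arc shifts the cyclic product by a power of $i$ and hence rotates the spectral phase by a multiple of $\pi/2$, produce the parallel congruences in (c)--(e). This step also singles out the negative quadrangles as the only mixed $C_4$-configurations that can serve as cores of the extended families (j) and (k).

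For digraphs with cycles plus branching, I would argue by forbidden induced sub-digraphs together with switching normalization. Since the underlying graph is subcubic and contains at least one cycle, a finite case analysis on the number and position of degree-$3$ vertices relative to the cycles---combined with interlacing against the cycle configurations singled out above---restricts the underlying graph to a short list of skeletons. On each skeleton, a four-way switching reduces every admissible orientation either to a directed path attached to a canonical cycle (producing the infinite family (j) and the exceptional members in (k)), to the triangle-plus-pendant of (l), or to a sporadic list that, after collapsing switching classes, coincides with $X_1,\dots,X_{10},Y$ in (h)--(i). The converse direction---verifying $\rho_{\mathcal{H}}<2$ for each listed digraph---is a direct eigenvalue computation, where for the infinite family $\Box_{a,0,c,0}$ one exploits a reflection-type automorphism to reduce to a smaller matrix. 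I expect the main obstacle to lie precisely in this last step: the combinatorial explosion of orientations on each subcubic skeleton must be tamed down to the finite sporadic list, which requires identifying the right forbidden sub-digraphs (the ``Smith cycles'' $D_{4k}$, $\widetilde{C_{4k+2}}$, and their perturbed siblings coming from (c)--(e)) to be ruled out by interlacing, and a systematic use of four-way switching to merge distinct orientations that produce the same $\mathcal{H}$-spectrum before they are counted.
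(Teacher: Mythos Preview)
The paper does not contain a proof of this theorem: it is a survey, and Theorem~\ref{tgufo} is quoted verbatim from Guo and Mohar \cite{guo-mohar-dm} with only the surrounding discussion of four-way switching and the definitions of the digraphs $D_n$, $\widetilde{C_n}$, $\widetilde{C_n'}$, $\widetilde{C_n''}$, $\Box_{abcd}$ as context. There is therefore nothing in the present paper to compare your proposal against.

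That said, your outline is broadly consonant with how such classification results are obtained, and the three tools you single out (switching invariance, interlacing, Smith's list) are indeed the structural backbone of the original argument. Your own diagnosis of the obstacle is accurate: the substantive work lies entirely in the ``digraphs with cycles plus branching'' phase, and your proposal remains at the level of a strategy there rather than a proof. In particular, phrases like ``a finite case analysis \ldots\ restricts the underlying graph to a short list of skeletons'' and ``after collapsing switching classes, coincides with $X_1,\dots,X_{10},Y$'' are precisely the places where the actual labor of \cite{guo-mohar-dm} resides, and you have not supplied the mechanism that makes either reduction terminate. If you intend to reconstruct the proof, you should consult \cite{guo-mohar-dm} directly; the present paper will not help you fill those gaps.
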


The following corollary follows from the above theorem.

\begin{cor}
The smallest limit point for the $\mathcal{H}$-spectral radius of mixed graphs is $2$.
\end{cor}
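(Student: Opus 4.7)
The plan is to verify both that $2$ is an $\mathcal{H}$-limit point of the $\mathcal{H}$-spectral radius of mixed graphs and that no real number $\lambda<2$ is one.

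For the first point, I would exploit the paths. Since any mixed graph whose underlying graph is a tree $T$ is switching equivalent to the one in which all arcs form digons, we have $\rho_{\mathcal{H}}(T)=\rho_A(T)$. In particular $\rho_{\mathcal{H}}(P_n)=2\cos(\pi/(n+1))$, yielding a strictly increasing sequence of pairwise distinct values converging to $2$ from below, so $2$ qualifies as an $\mathcal{H}$-limit point. For the second point, suppose for a contradiction that some $\lambda<2$ were an $\mathcal{H}$-limit point, witnessed by a sequence $\{G_k\}$ of mixed graphs with pairwise distinct $\rho_{\mathcal{H}}(G_k)\to\lambda$. For $k$ large enough $\rho_{\mathcal{H}}(G_k)<2$, so by Theorem~\ref{tgufo} every such $G_k$ is switching equivalent, hence cospectral, to a member of one of the families (a)--(l) of that theorem. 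Families (g), (h), (i), (k), (l) are finite and supply only finitely many distinct spectral-radius values, so the accumulation at $\lambda$ would have to occur inside one of the infinite families (a)--(f) or (j).

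It remains to show that within each of these infinite families the set of $\mathcal{H}$-spectral radii has no accumulation point strictly below $2$. For families (a)--(e) -- the paths, the directed cycles $D_n$, and the cycle variants $\widetilde{C_n}$, $\widetilde{C_n'}$, $\widetilde{C_n''}$ -- the Hermitian adjacency matrices are circulants or low-rank perturbations of circulants, and an explicit diagonalisation gives $\rho_{\mathcal{H}}$ as a trigonometric expression in $n$ that tends monotonically to $2$ under the stated congruence restrictions. For $T_{a,1,1}$, Smith's theorem and the Perron--Frobenius monotonicity of $\rho_A$ under vertex addition give $\rho_A(T_{a,1,1})\nearrow 2$ as $a\to\infty$. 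For $\square_{a,0,c,0}$, a pendant-path deletion recursion for the $\mathcal{H}$-characteristic polynomial, combined with an interlacing argument, yields strict monotonicity in $a+c$ with supremum $2$. In each case, for any fixed $\lambda<2$ only finitely many members satisfy $\rho_{\mathcal{H}}\leqslant\lambda$, contradicting the existence of infinitely many distinct values accumulating at $\lambda$.

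The main obstacle is precisely this family-by-family analysis, most notably the case $\square_{a,0,c,0}$: the negative quadrangle interacts with the two attached directed paths through complex $\mathcal{H}$-entries, and the careful bookkeeping required to extract strict monotonicity and the correct supremum from the characteristic-polynomial recursion is the technically delicate step. The remaining families reduce either to the real adjacency case via switching equivalence or to elementary circulant computations.
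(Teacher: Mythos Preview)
Your plan is correct and is exactly the route the paper intends: the corollary is stated immediately after Theorem~\ref{tgufo} with the remark that it ``follows from the above theorem'', and your two-step outline --- exhibit the paths to realise $2$ as a limit point, then use the classification to rule out any $\lambda<2$ --- is precisely how one cashes that remark out.

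That said, the family-by-family analysis you sketch for the second step is heavier than necessary, and the ``main obstacle'' you flag for $\square_{a,0,c,0}$ evaporates under a single uniform observation. Every member of each infinite family (a)--(f) and (j) contains an induced sub-mixed-graph whose underlying graph is a path of length tending to infinity with the parameter: delete one vertex from any cycle variant in (b)--(e) to obtain $P_{n-1}$; families (a) and (f) are already trees; and $\square_{a,0,c,0}$ contains the induced path on $a+c+3$ vertices running from one pendant end through $v_1,v_2,v_3$ to the other. Since any mixed graph whose underlying graph is a path is switching equivalent to the all-digon version, its $\mathcal{H}$-spectral radius is $\rho_A$ of that path, and Cauchy interlacing for Hermitian matrices then yields $\rho_{\mathcal{H}}(G)\geqslant 2\cos(\pi/m)$ with $m\to\infty$. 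Together with the bound $\rho_{\mathcal{H}}<2$ supplied by Theorem~\ref{tgufo}, this squeezes every infinite family to $2$, so for any fixed $\lambda<2$ only finitely many switching classes --- hence finitely many distinct spectral-radius values --- lie below $(\lambda+2)/2$, giving the contradiction. No characteristic-polynomial recursion, circulant diagonalisation, or monotonicity claim is needed; in particular, the monotonicity you assert for the cycle variants (b)--(e) is neither obvious under the congruence restrictions nor required, since mere convergence to $2$ suffices.
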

As in previous sections, let $\rho_1= \sqrt{2+\sqrt{5}}$.
By Theorem~\ref{22+5} we see that there exist infinite families of trees (and hence of mixed graphs) in ${\mathcal G}_A^2$ and in   ${\mathcal G}_A^{>2} \cap {\mathcal G}_A^{< \rho_1}$. Thereby, it makes sense to  consider the following problem.
\begin{prob}
For the Hoffman program of mixed graphs with respect to the $\mathcal{H}$-matrix,
\begin{itemize}
\item[{\rm (i)}]
determine all the $\mathcal{H}$-limit points of the $\mathcal{H}$-spectral radius of mixed graphs less than $\sqrt{2+\sqrt{5}}$;
\item[{\rm (ii)}]
characterize the graphs with $\rho_{\mathcal{H}}(G) =2$ and $\rho_{\mathcal{H}}(G) \in (2, \sqrt{2+\sqrt{5}}]$.
\end{itemize}
\end{prob}

\section{Signed-adjacency matrix}

A {\it signed graph} $\Gamma = (G; \sigma)$ is a  non-empty graph $G = (V, E)$, with vertex set $V$ and edge set $E$, together with a function $\sigma: E \rightarrow \{+1, -1\}$ assigning a positive or negative sign to each edge. The (unsigned) graph $G$ is said to be the {\it underlying graph} of $\Gamma$, and the function $\sigma$ is called the {\it signature} of $\Gamma$. Unsigned graphs are treated as  signed graphs equipped with the  {\it all-positive signature $\sigma^+$} such that $\sigma^+(E)=\{1\}$.  Clearly, the {\it all-negative} signature $\sigma^-=-\sigma^+$ maps all edges onto $-1$.

For a subset $U \subseteq V(G)$, let $\Gamma^U$ be the signed graph obtained from $\Gamma$ by reversing the signs of the edges in the cut $[U, V(G) \backslash U]$, namely $\sigma_{\Gamma^U}(e) = -\sigma_{\Gamma}(e)$ for any edge $e$ between $U$ and $VG) \backslash U$, and $\sigma_{\Gamma^U}(e) = \sigma_{\Gamma}(e)$ otherwise. The signed graph $\Gamma$ and $\Gamma^U$ (and the signatures  $\sigma_{\Gamma}$ and $\sigma_{\Gamma^U}$ as well)  are said to be {\it switching equivalent}.

The {signed adjacency matrix} $\mathcal{S}(\Gamma) = (s_{ij})$ is the symmetric of $\{0,1,-1\}$-matrix such that $s_{ij} = \sigma(ij)$ whenever the vertices $i$ and $j$ are adjacent, and $a_{ij} = 0$ otherwise.  The above switching can also be explained from a matrix viewpoint. In fact, let $\Gamma$ and $\Gamma^U$ be two switching equivalent graphs. Consider the {\it signature matrix} $S_U = {\rm diag}(\epsilon_1, \epsilon_2, \cdots, \epsilon_n)$ such that
$$\epsilon_i =
\begin{cases}
+1 &\text{if $i \in U$};\\
-1, &\text{if $i \in \Gamma \backslash U$}.
\end{cases}
$$
It is easy to check that $A(\Gamma^U) = S_UA(\Gamma)S_U$. In other words, signed graphs from the same switching class share similar graph matrices by means
of signature matrices. This in particular implies that $\Gamma$ and $\Gamma^U$ are $\mathcal{S}$-cospectral. It is worthy to note that all signatures on forests are switching equivalent; moreover,
the $\mathcal S$-spectral radius is not always equal to the largest $\mathcal S$-eigenvalue, the minimal example being $(C_3, \sigma^-)$ whose $\mathcal S$-spectrum is $\{ -2, 1,1\}$.

For basic results in the theory of signed graphs, the reader is referred to \cite{zas-3,zas-4}.  On the same topic, Zaslavsky currently edits two dynamic surveys \cite{zas-1,zas-2}. For a recent list of open problems concerning signed graphs, see \cite{ber-cio-koo-wang}.



We now focus on Hoffman program in relation to signed graphs.  

\begin{prop}
The smallest limit point of $\mathcal{S}$-spectral radius of signed graphs is $2$.
\end{prop}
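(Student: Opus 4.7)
The plan is to prove both that $2$ is a limit point of the $\mathcal{S}$-spectral radius, and that no value strictly less than $2$ is one. For the first, I take the ordinary paths viewed as signed graphs via the all-positive signature, $\Gamma_n = (P_n, \sigma^+)$. Since $\mathcal{S}(\Gamma_n) = A(P_n)$, one has $\rho_{\mathcal{S}}(\Gamma_n) = \rho_A(P_n) = 2\cos(\pi/(n+1))$, a pairwise distinct sequence converging to $2$ from below. Thus $2$ is an $\mathcal{S}$-limit point; this is also consistent with Hoffman's Theorem~\ref{hoffman}, since every unsigned graph is a signed graph.

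For the second claim, I fix any $c \in [0, 2)$ and show that the set $\mathcal{F}_c$ of connected signed graphs $\Gamma$ (up to isomorphism) with $\rho_{\mathcal{S}}(\Gamma) \leq c$ is finite. This suffices: the associated set $\{\rho_{\mathcal{S}}(\Gamma) : \Gamma \in \mathcal{F}_c\}$ is then finite, so no $\gamma \leq c$ can be a limit point, and letting $c \uparrow 2$ rules out every $\gamma < 2$. I prove the finiteness of $\mathcal{F}_c$ in three steps. First, since $\mathcal{S}(\Gamma)^2$ is positive semidefinite, its largest eigenvalue $\rho_{\mathcal{S}}(\Gamma)^2$ dominates each diagonal entry $(\mathcal{S}(\Gamma)^2)_{vv} = d_G(v)$, giving $\Delta(G) \leq c^2 < 4$, hence $\Delta(G) \leq 3$. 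Second, for any induced sub-signed-graph whose underlying graph is a path $P_m$, all signatures are switching-equivalent to $\sigma^+$ (paths being forests, as the paper already records), so $\rho_{\mathcal{S}}(P_m, \sigma|_{P_m}) = \rho_A(P_m) = 2\cos(\pi/(m+1))$; Cauchy interlacing on the corresponding principal submatrix of $\mathcal{S}(\Gamma)$ then yields $\rho_{\mathcal{S}}(\Gamma) \geq 2\cos(\pi/(m+1))$, bounding $m$ by an explicit $M(c)$. Third, in any graph a shortest path between two vertices is necessarily induced (any chord would shorten it), so $\diam(G)$ is at most the length of the longest induced path, hence $\diam(G) \leq M(c)$; combined with $\Delta(G) \leq 3$, a Moore-type count bounds $|V(G)|$ by an explicit function of $c$, and only finitely many signatures live on each fixed underlying graph.

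The main subtle point is the second step, where I combine the switching-invariance of tree signatures (which is what eliminates the dependence on $\sigma$ for a path subgraph) with Cauchy interlacing; the remaining observations are quantitative and routine.
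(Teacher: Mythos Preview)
Your proof is correct and uses essentially the same ingredients as the paper's: paths witness that $2$ is a limit point, the bound $\sqrt{\Delta(G)}\le \rho_{\mathcal S}(\Gamma)$ controls the maximum degree, switching-equivalence of signatures on trees identifies the spectrum of an induced signed path with that of the unsigned path, and Cauchy interlacing transfers this bound to $\Gamma$. The only difference is packaging: the paper argues directly that any sequence with pairwise distinct spectral radii and limit $\gamma\le 2$ must have diameters tending to infinity (hence contain arbitrarily long induced paths, forcing $\gamma\ge 2$), whereas you prove the contrapositive in a stronger quantitative form, namely that $\mathcal F_c$ is finite for every $c<2$ via an explicit Moore-type bound on the order. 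Your version yields a bit more (finiteness rather than just absence of accumulation), but the two arguments are essentially dual to one another.
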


\begin{proof} The real number $2$ is surely an ${\mathcal S}$-limit point. In fact $\lim_{k \rightarrow \infty}\rho_{\mathcal{S}}(P_k) = 2$. In order to prove that there are no ${\mathcal S}$-limit points less than $2$, let 
 $\{\Gamma_k=(G_k, \sigma_k)\, |\, k\in \mathbb{N}\}$ be a sequence of signed graphs such that  $$\rho_{\mathcal S}(\Gamma_i) \neq \rho_{\mathcal S}(\Gamma_j) \quad \text{whenever  $i \neq j$}, \quad \text{and}  \quad \lim_{k \rightarrow \infty}\rho_{\mathcal S}(\Gamma_k) =\gamma \leqslant 2.$$
Since the $\Gamma_i$'s are pairwise distinct, and their maximum vertex degree is bounded being $\sqrt{\Delta(G_i)} \leqslant \rho_{\mathcal S}(\Gamma_i)$, then $\lim_{k \rightarrow \infty}\diam (\Gamma_k)=+\infty$. It follows that there exists a subsequence
of signed graphs  $\{\Gamma_{k_n}\, |\, n\in \mathbb{N}\}$ such that $(P_n, \sigma_{|P_n})$ is a subgraph of $ \Gamma_{k_n}$. 
Now,  $(P_n,(\sigma_{k_n})_{|P_n})$ and $P_n$ are switching equivalent, and, 
as a consequence of Cauchy Interlacing Theorem holding for all Hermitian matrices (see, for instance \cite[Theorem 0.10]{cve-book}), we have
 $\rho_{\mathcal S} (P_n, (\sigma_{k_n})_{|P_n}) \leqslant  \rho_{\mathcal S} (\Gamma_{k_n})$. Hence,
$$ 2=  \lim_{n \rightarrow \infty} \rho_{A} (P_n) =   \lim_{n \rightarrow \infty}   \rho_{\mathcal S}(P_n, (\sigma_{k_n})_{|P_n}) \leq \lim_{k \rightarrow \infty}\rho_{\mathcal S}(\Gamma_k)  =\gamma \leq 2,$$
which is possible only if $\gamma=2$.
\end{proof}

A (signed) graph is said to be {\it maximal} with respect to some property $\mathcal P$ if it is not a  proper induced subgraph of some other (signed) graph satisfying $\mathcal P$. All maximal signed graphs in $\mathcal G_{\mathcal S}^{\leqslant 2}$ have been detected by McKee and Smyth \cite{mck-smy}; they are the signed graphs $T_{2k}$ ($k \geqslant 3$), $S_{14}$ and $S_{16}$ depicted in Fig.~\ref{maximal}.

 \begin{thm}{\rm \cite{mck-smy}}\label{sign-2}
	Signed graphs in $\mathcal G_{\mathcal S}^{\leqslant 2}$ are switching equivalent to the induced subgraphs of (i) the $ 2k $-vertex toral tessellation $ T_{2k} $, for $k \geqslant 3$; (ii) the $14$-vertex signed graph $ S_{14} $; (iii) the $16$-vertex signed hypercube $ S_{16} $. Moreover, $\rho_{\mathcal S} (S_{14}) = \rho_{\mathcal S} (S_{16}) = \rho_{\mathcal S} (T_{2k}) = 2$ for all $k\geqslant 3$.
\end{thm}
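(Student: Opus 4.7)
The plan is to translate the spectral condition into an integral Gram-matrix problem and then invoke the classification of $ADE$ root lattices. Since $\rho_{\mathcal S}(\Gamma) \leqslant 2$ forces $-2I \preceq \mathcal S(\Gamma) \preceq 2I$, the symmetric integer matrix $M := \mathcal S(\Gamma) + 2I$ is positive semidefinite, with diagonal entries equal to $2$ and off-diagonal entries in $\{0,\pm 1\}$. Factoring $M$ as a Gram matrix produces vectors $v_1, \ldots, v_n$ in a Euclidean space satisfying $\|v_i\|^2 = 2$ and $\langle v_i, v_j\rangle \in \{0,\pm 1\}$; these vectors generate an integral lattice $\Lambda$ in which they are norm-$\sqrt 2$ elements. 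Crucially, a switching $\Gamma \leadsto \Gamma^U$ corresponds precisely to the sign-change $v_i \mapsto -v_i$ for $i \in U$, an isometry of $\Lambda$, so the desired classification up to switching becomes the classification of vector configurations up to signs.

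Next, I would invoke the theorem of Cameron, Goethals, Seidel, and Shult (equivalently, the Witt classification of integral lattices generated by norm-$2$ vectors) to conclude that $\Lambda$ decomposes as an orthogonal direct sum of root lattices of types $A_m$, $D_m$, $E_6$, $E_7$, $E_8$, possibly together with affine extensions when infinite families are involved. A connected signed graph corresponds to a single indecomposable component. A type-by-type inspection then identifies the maximal connected configurations: the exceptional pieces $E_7$ and $E_8$ host only finitely many norm-$\sqrt 2$ vectors, and their Weyl-group symmetry isolates the maximal cases as $S_{14}$ (from $E_7$) and the signed hypercube $S_{16}$ (from $E_8$); the infinite connected families must live in an $\widetilde{A}$-type affine component, whose geometry on a cylinder identifies the maximal even-order instance as the toral tessellation $T_{2k}$. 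For each of $T_{2k}$, $S_{14}$, $S_{16}$, the claim $\rho_{\mathcal S} = 2$ is then checked by exhibiting an explicit eigenvector dictated by the ambient symmetry (a translation character of $\mathbb Z_{2k}$ for $T_{2k}$, and a Weyl-group orbit sum for $S_{14}$ and $S_{16}$).

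The main obstacle will be the enumeration step: one must rule out ``hybrid'' maximal configurations mixing an exceptional block with an affine tail, and then prove true maximality, i.e.\ that no larger connected signed graph containing one of the three as a proper induced sub-signed graph still has all eigenvalues in $[-2, 2]$. This is typically handled by compiling a short catalogue of small forbidden induced sub-configurations whose $\mathcal S$-spectral radius strictly exceeds $2$, and then using Cauchy interlacing (as already invoked in the preceding Proposition) to propagate the obstruction. By contrast, the reduction to the Gram-matrix setting and the appeal to the $ADE$ classification are conceptual and essentially formal; the real combinatorial content of the theorem sits in the final enumeration and in the verification of maximality.
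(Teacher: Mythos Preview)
The paper itself contains no proof of this theorem: it is a survey, and Theorem~\ref{sign-2} is simply quoted from McKee and Smyth \cite{mck-smy}. There is thus no in-paper argument to compare your proposal against; any comparison has to be with the original source.

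Your Gram-matrix/root-lattice strategy is a legitimate line of attack, and it is close in spirit to the Cameron--Goethals--Seidel--Shult treatment of graphs with least eigenvalue at least $-2$. It is, however, \emph{not} what McKee and Smyth actually do. Their proof is a direct, partly computer-assisted enumeration: they call integer symmetric matrices with spectrum contained in $[-2,2]$ \emph{cyclotomic}, show that every indecomposable cyclotomic matrix embeds in a maximal one, and then grow maximal cyclotomic matrices one row and column at a time, using interlacing to prune the search. The output of that search, restricted to $\{0,\pm1\}$ off-diagonal entries, is precisely the list $T_{2k}$, $S_{14}$, $S_{16}$. So your proposal is a genuinely different route; what it buys is a conceptual explanation via lattices, at the cost of a harder final enumeration, whereas McKee--Smyth trade conceptual clarity for a finite, checkable search.

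One point in your sketch does need tightening. You invoke only the condition $\mathcal S(\Gamma)+2I\succeq 0$, which lands you in a root lattice \`a la CGSS; but that by itself classifies signed graphs with \emph{least} eigenvalue at least $-2$, a vastly larger class. The companion condition $2I-\mathcal S(\Gamma)\succeq 0$ must also be imposed, and it is precisely this two-sided constraint that forces the ambient configuration to be of affine (rank-deficient) type and collapses the picture to the short list in the statement. You gesture at this in your ``enumeration step'', but as written the argument does not yet explain where the upper bound on the spectrum is used. That is the real gap to fill if you want a self-contained lattice proof.
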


\begin{figure}[h]
\begin{center}
\includegraphics[width=0.7\textwidth]{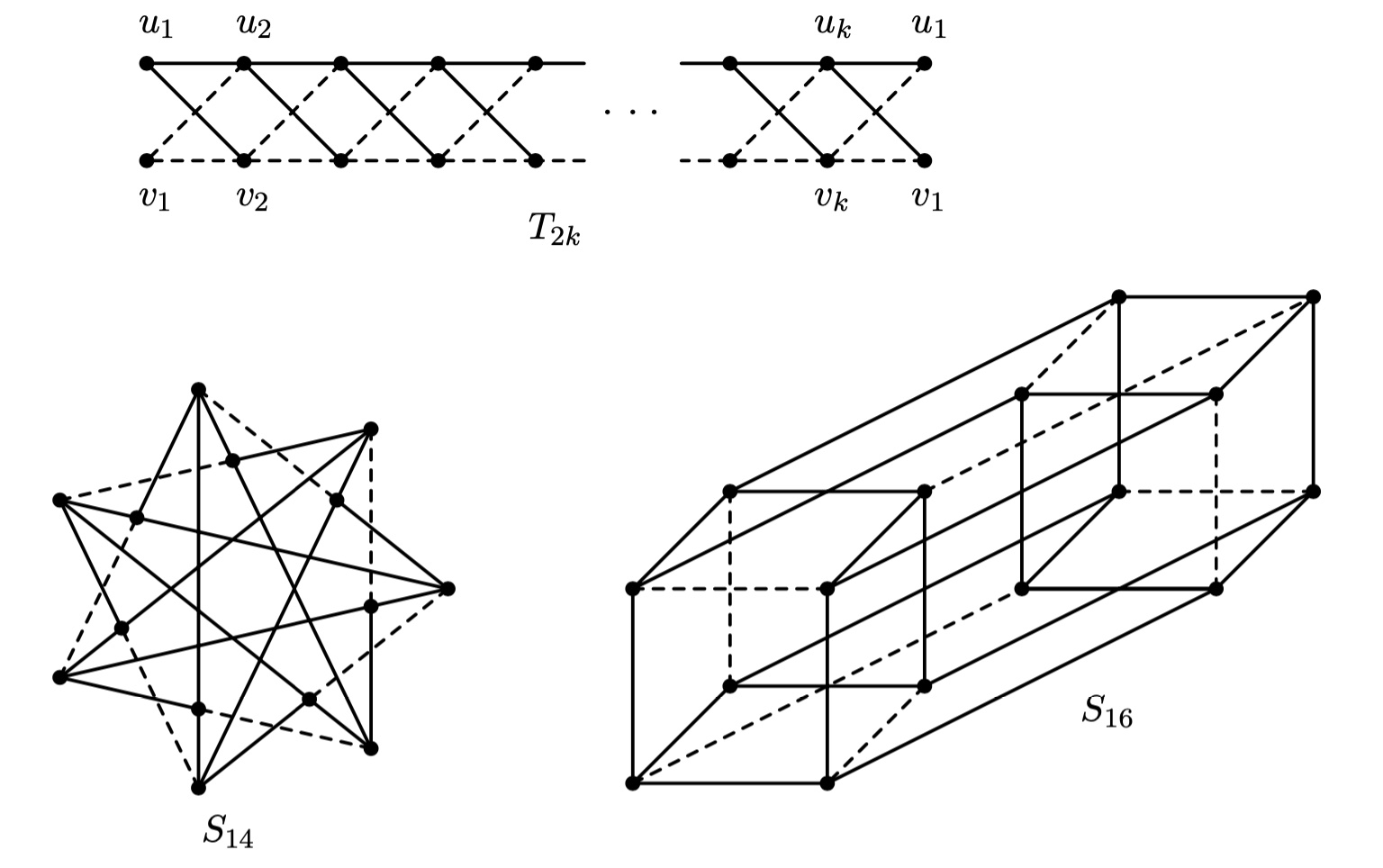}
\caption{\small Maximal signed graphs in $\mathcal G_{\mathcal S}^{\leqslant 2}$. Negative edges are depicted by dashed lines.}
\label{maximal} 
\end{center}
\end{figure}

As proved, for instance,  in \cite[Theorem 2.5]{ber-cio-koo-wang}, for a signed graph $\Gamma = (G; \sigma)$ we obtain $\rho_{\mathcal{S}}(\Gamma) \leq \rho_A(G)$. Thus, the $A$-spectral radius of the underlying graph naturally limits the magnitude of the eigenvalues of the corresponding signed graph. 

\begin{prob}{\rm \cite[Problem 3.11]{ber-cio-koo-wang}} Let $\rho_1= \sqrt{2+\sqrt{5}}$. 
Characterize all signed graphs in $\mathcal G_{\mathcal S}^{\leqslant \rho_1}$.
\end{prob}

Fortunately, the theory of limit points for the $\mathcal S$-spectral radius of signed graphs partially overlaps the one related to the  $A$-index of unsigned graphs. For instance, since all signed graphs sharing a fixed forest $F$  as underlying graph are $\mathcal S$-cospectral,
then  $\rho_{\mathcal S} (F, \sigma) = \rho_{\mathcal S} (F, \sigma^+) = \rho_A(F)$. It follows by Cauchy's Interlacing Theorem that an acyclic subgraph of a graph in $\mathcal G_{\mathcal S}^{\leqslant \rho_1}$ necessarily appears among the ones listed in Theorem~\ref{22+5}.

For the same reason, we can use the very same sequences of (acyclic) open caterpillars used by  Shearer in its proof of Theorem~\ref{she-lim}, to prove the following proposition concerning $\mathcal{S}$-spectra (an open caterpillar is a graph such that the removal of all pendant vertices results in a chordless path).

\begin{prop}
For any $\lambda \geq \sqrt{2+\sqrt{5}}$, there exists a sequence of signed caterpillars $\{\Gamma_k\, |\, k\in \mathbb{N}\}$ such that $\lim\limits_{k \rightarrow \infty}\rho_{\mathcal{S}}(\Gamma_k) = \lambda$. \end{prop}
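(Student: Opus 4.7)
My plan is to bootstrap Shearer's construction for the $A$-spectral radius directly into the signed setting, exploiting two facts already established in the paper. First, Shearer's proof of Theorem~\ref{she-lim} produces, for every $\lambda \geq \sqrt{2+\sqrt{5}}$, a sequence of open caterpillars $\{G_k\}_{k \in \N}$ such that $\rho_A(G_k) \to \lambda$; this is precisely the reason why the paragraph preceding the proposition emphasizes that Shearer's families are acyclic. Second, as noted earlier in this section, all signatures on a forest are switching equivalent, so for any tree $T$ and any signature $\sigma$ the signed graph $(T,\sigma)$ is $\mathcal{S}$-cospectral with $(T,\sigma^+)$, whose signed adjacency matrix coincides with $A(T)$.

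The first step would be to take Shearer's caterpillar sequence $\{G_k\}$ and equip each $G_k$ with an arbitrary signature $\sigma_k \in \{\pm 1\}^{E(G_k)}$, setting $\Gamma_k = (G_k, \sigma_k)$. The second step is to invoke the switching invariance just recalled: since $(G_k, \sigma_k)$ is switching equivalent to $(G_k, \sigma^+)$, we have
\[
\rho_{\mathcal S}(\Gamma_k) \;=\; \rho_{\mathcal S}(G_k, \sigma^+) \;=\; \rho_A(G_k).
\]
Passing to the limit yields
\[
\lim_{k \to \infty} \rho_{\mathcal S}(\Gamma_k) \;=\; \lim_{k \to \infty} \rho_A(G_k) \;=\; \lambda,
\]
which is the asserted limit.

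The only point that deserves attention is the pairwise-distinctness condition in the definition of an $\mathcal S$-limit point, namely that $\rho_{\mathcal S}(\Gamma_i) \neq \rho_{\mathcal S}(\Gamma_j)$ for $i \neq j$. This is inherited for free, because the values $\rho_A(G_k)$ in Shearer's sequence are already pairwise distinct, and if this ever failed for a particular reading of his construction one could pass to a subsequence without disturbing the limit. I do not anticipate a genuine obstacle: the entire argument is a one-line reduction, with all the analytic content outsourced to Shearer's theorem and the structural content to the switching equivalence of signatures on forests.
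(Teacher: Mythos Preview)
Your proposal is correct and follows exactly the approach the paper intends: the proposition is stated without a separate proof, with the preceding paragraph explaining that one uses Shearer's acyclic caterpillar sequences together with the fact that all signatures on a forest are switching equivalent, so that $\rho_{\mathcal S}(G_k,\sigma_k)=\rho_A(G_k)$. Your additional remark on pairwise distinctness is a welcome clarification that the paper leaves implicit.
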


Although Hoffman's theorem was ultimately based on a tree, its proof cannot be directly translated to $\mathcal{S}$-spectra. In fact, if $G$ is not a tree or a cycle, then $\rho_A(G) > (\sqrt{5}+1)/2$, whereas $\mathcal G_{\mathcal S}^{<2}$ contains signed unicyclic graphs  which are not cycles and signed bicyclic graphs as well (see \cite{akb-fb-eatl, BBC2}.

\begin{prob}
Characterize the limit points of the  $\mathcal{S}$-spectral radius of signed graphs less than $\sqrt{2+\sqrt{5}}$.
\end{prob}

\section{Skew-adjacency matrix}

To our knowledge, the first attempts to build a spectral theory based on skew-adjacency matrices associated to oriented graphs go back to around 2009 \cite{adi-bal-so,ima,sha-so}. The paper \cite{cave} by Cavers, Cioab\u{a} et al. provides a comprehensive introduction to this topic. An  oriented graph is a mixed graph without digons. In any case, our notation and  terminology  will be largely consistent with  \cite{stan1}.

Let  $G =(V, E)$ be an undirected  non-empty graph of order $n$, an {\it oriented graph} is a pair $\widetilde{G} = (G, \tilde{\sigma})$, where the {\it edge orientation} $\tilde{\sigma} : E \rightarrow V$ is a map satisfying $\tilde{\sigma}(ij) \in \{i, j\}$, for every $ij \in E$. As in the context of signed graphs, we say that $G$ is the {\it underlying graph} of $\widetilde{G}$. The skew-adjacency matrix $S(\widetilde{G}) = (\tilde{s}_{ij})$ of $\widetilde{G}$ is the $n \times n$ matrix defined by
$$\tilde{s}_{ij} =
\begin{cases}
\phantom{-}0 &\text{if $ij \not\in E$};\\
\phantom{-}1 &\text{if $\sigma'(ij) = i$};\\
-1 &\text{if $\sigma'(ij) = j$}.
\end{cases}
$$
Note that the non-zero eigenvalues in ${\rm Sp}_S(\widetilde{G})$ are all purely imaginary, the matrix $S(\widetilde{G})$ being real skew symmetric.  Then {\it $S$-index} $\rho_S(G')$ of $G'$ is defined as the largest modulus of the $S$-eigenvalues of $G'$. 

As in \cite{stan1, xu-gong1, xu-gong2}, if $\sigma (ij)=j$, we say that the edge $ij$ is oriented from $i$ to $j$ and write $i \!\rightarrow\!j$. Other authors  adopt the other possible choice (see for instance, \cite{cave}); in any case, these two approaches are equivalent from a spectral perspective.

Clearly, the {\it $S$-spectral radius} $\rho_S(\widetilde{G})$ of $\widetilde{G}$ is given by the largest modulus of its $S$-eigenvalues.  For any $U \subseteq V(G)$, let $\widetilde{G}^{U}$ be the oriented  graph obtained from $\widetilde{G}$ by reversing the orientation of each edge between a vertex in $U$
and a vertex in $V(G) \backslash U$. We say that $\widetilde{G}$ and $\widetilde{G}^{U}$ are {\it switching equivalent}. Note that ${\rm Sp}_S(\widetilde{G}^U)={\rm Sp}_S(\widetilde{G})$; in fact, $S(\widetilde{G}^U)$ and $S(\widetilde{G}^U)$ are similar via the matrix $S_U$  defined in Section~6.

Let $\widetilde{G}$ be an oriented graph with vertex set
$\{u_1,u_2,\dots,u_n\}$. Stani\'c \cite{stan1} defined the {\it bipartite double} ${\rm bd}(\widetilde{G})$ of $\widetilde{G}$ to be an oriented graph with vertices $\{u_11,u_12,u_21,u_22,\dots,u_n1,u_n2\}$ and  $u_ik \rightarrow u_jl$ if and only if $u_i \rightarrow u_j$ and $k \neq l$. It is easily seen that $S({\rm bd} (G'))$ is the Kronecker product $S(G') \otimes A(K_2)$.

We say that an oriented graph $\widetilde{G} = (G, \tilde{\sigma})$ is bipartite if so is $G$. The bipartite double ${\rm bd}(\widetilde{G})$ is bipartite and turns out to be connected if and only if $G$ is non-bipartite. Recall that, if $G$ is bipartite, then ${\rm Sp}_A(G)$ and ${\rm Sp}_{\mathcal S} (G, \sigma)$ are symmetric with respect to $0$ for each signature $\sigma$. Shader and So \cite{sha-so} proved that $G$ is bipartite if and only if there is an orientation $\tilde{\sigma}$ such that ${\rm Sp}_S(G,\tilde{\sigma}) = i{\rm Sp}_A(G)$.

The problem of determining the oriented graphs in $\mathcal G_S^{\leqslant 2}$ has been first investigated by Xu and Gong, and some partial results are given in \cite{xu-gong1,xu-gong2}. Stani\'c \cite{stan1}  succeeded in detecting all maximal oriented graphs in $\mathcal G_S^{\leqslant 2}$ by  forging a nice bridge between the $S$-eigenvalues of oriented graphs and the $\mathcal S$-spectrum of suitably associated signed graphs. 
Let $G=(V,E)$ be a non empty graph. A signature $\sigma: E \rightarrow \{-1,1\}$ is said to be associated to an edge orientation $\tilde{\sigma} : V \rightarrow E$ if
\begin{equation}\label{ss}
\sigma(ik)\sigma(jk) = \tilde{s}_{ik}\tilde{s}_{jk}\; \text{holds for every pair of adjacent edges}\; ik\; \text{and}\; jk.
\end{equation}
Together with the bipartite double Stani\'c provided the following two theorems  (in their statements the exponential notation is used to denote the multiplicity of an eigenvalue).

\begin{thm}{\rm \cite{stan2}}\label{ori-sign1} Let $\widetilde{G}=(G, \tilde{\sigma})$  be a bipartite oriented graph. If ${\rm rank}(S(\widetilde{G})) =2k$ and $\sigma$ is associated with $\tilde{\sigma}$, then
\[ \scalemath{.90}{ {\rm Sp}_S(\widetilde{G}) = \{ \pm i\lambda_1, \pm i\lambda_2, \ldots, \pm i\lambda_k, 0^{n-2k} \}
\quad \Longleftrightarrow \quad {\rm Sp}_{\mathcal S}(G,\sigma) = \{ \pm\lambda_1, \pm \lambda_2, \ldots, \pm \lambda_k, 0^{n-2k} \}.} \]
\end{thm}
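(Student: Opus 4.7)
The plan is to reduce the spectral correspondence to a single diagonal unitary similarity, after first fixing a convenient representative of the switching class of $\sigma$. Let $(V_1,V_2)$ denote the bipartition of $V(G)$, and order the vertices so that $V_1$ comes first; then
$$S(\widetilde G)=\begin{pmatrix} 0 & B \\ -B^T & 0 \end{pmatrix},$$
where $B_{ij}=\tilde s_{ij}$ for $i\in V_1$, $j\in V_2$. I would begin by showing that any two signatures $\sigma_1,\sigma_2$ both satisfying \eqref{ss} are switching equivalent: for every vertex $k$ the product $\sigma_1\sigma_2$ assumes the same value on each edge incident to $k$, so it is constant on every connected component; on a bipartite graph $\sigma$ and $-\sigma$ lie in one switching class (switching on $V_1$ negates every edge in $[V_1,V_2]=E$), hence so do $\sigma_1$ and $\sigma_2$. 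Consequently $\mathcal S(G,\sigma)$ and $\mathcal S(G,\sigma')$ are cospectral for any associated $\sigma'$, and I may work with the explicit choice $\sigma'(ij):=\tilde s_{ij}$ for $i\in V_1$, $j\in V_2$. A brief case distinction on whether the shared vertex $k$ of $ik,jk\in E$ lies in $V_1$ or $V_2$, combined with $\tilde s_{ab}=-\tilde s_{ba}$, confirms that $\sigma'$ satisfies \eqref{ss} (the two minus signs square to one when $k\in V_1$).

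With this choice one has $\mathcal S(G,\sigma')=\begin{pmatrix} 0 & B \\ B^T & 0 \end{pmatrix}$. I would then introduce the diagonal unitary $U:={\rm diag}(I_{|V_1|},\,iI_{|V_2|})$, with $U^{*}={\rm diag}(I,-iI)$, and verify by block multiplication that
$$U\,\bigl(iS(\widetilde G)\bigr)\,U^{*}=\mathcal S(G,\sigma'),$$
so that the Hermitian matrices $iS(\widetilde G)$ and $\mathcal S(G,\sigma')$ are unitarily similar and hence cospectral with identical rank. Multiplication by $i$ sends $\pm i\lambda_j$ to $\mp\lambda_j$ (which, as an unordered pair, equals $\pm\lambda_j$) and fixes $0$, so a spectrum of the form $\{\pm i\lambda_1,\ldots,\pm i\lambda_k,0^{n-2k}\}$ for $S(\widetilde G)$ corresponds exactly to $\{\pm\lambda_1,\ldots,\pm\lambda_k,0^{n-2k}\}$ for $\mathcal S(G,\sigma')$, and the implication runs in both directions. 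The rank hypothesis ${\rm rank}(S(\widetilde G))=2k$ is preserved by the similarity and accounts for the $n-2k$ zero eigenvalues on both sides.

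The main obstacle I anticipate is the preliminary switching-class step, not the matrix computation: establishing that all associated signatures belong to a single switching class essentially uses bipartiteness twice, both to force $\sigma\sim-\sigma$ (so the alternative $\sigma_1\sigma_2\equiv -1$ causes no harm) and to impose on $S(\widetilde G)$ and $\mathcal S(G,\sigma')$ the compatible $2\times 2$ block forms that make the diagonal unitary $U$ work. Once this framework is in place, the block identity $U(iS)U^{*}=\mathcal S(G,\sigma')$ and the passage between the spectra $\{\pm i\lambda_j\}$ and $\{\pm\lambda_j\}$ are routine.
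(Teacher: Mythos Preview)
The paper does not supply its own proof of this theorem: it is quoted verbatim from Stani\'c \cite{stan2} and used as a black box, so there is nothing in the present paper to compare against. That said, your argument is correct and self-contained. The diagonal unitary $U=\mathrm{diag}(I_{|V_1|},\,iI_{|V_2|})$ does indeed conjugate $iS(\widetilde G)$ to the symmetric block matrix $\bigl(\begin{smallmatrix}0&B\\ B^T&0\end{smallmatrix}\bigr)=\mathcal S(G,\sigma')$, and your verification that $\sigma'(ij):=\tilde s_{ij}$ (with $i\in V_1$) satisfies \eqref{ss} is accurate in both cases $k\in V_1$ and $k\in V_2$. The preliminary reduction---that any two associated signatures are switching equivalent because the product $\sigma_1\sigma_2$ is constant on each connected component and, on a bipartite component, $\sigma\sim-\sigma$ via switching on one colour class---is also sound; note only that for disconnected $G$ the constant may differ across components, but the componentwise switching you invoke handles this without change. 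The passage from the spectrum $\{\pm i\lambda_j,0^{n-2k}\}$ of $S(\widetilde G)$ to $\{\pm\lambda_j,0^{n-2k}\}$ of $\mathcal S(G,\sigma')$ is then immediate, and the equivalence is genuinely two-sided because unitary similarity preserves the full spectrum.
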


\begin{thm}{\rm \cite{stan2}}\label{ori-sign2} Let $\widetilde{H}=(H,\tilde{\sigma})$ denote the bipartite double of the oriented graph $\widetilde{G}$, and let $\sigma$ be  the signature on $H$ associated with $\tilde{\sigma}$. If ${\rm rank}(S(\widetilde{G})) =2k$, then 
\[ \scalemath{.90}{{\rm Sp}_S(\widetilde{G}) = \{ \pm i\lambda_1, \pm i\lambda_2, \ldots, \pm i\lambda_k, 0^{n-2k} \}
\quad \Longleftrightarrow \quad {\rm Sp}_{\mathcal S}(H,\sigma) = \{ (\pm\lambda_1)^2, (\pm \lambda_2)^2, \ldots, (\pm \lambda_k)^2, 0^{2(n-2k)} \}.}\]
\end{thm}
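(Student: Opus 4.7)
The strategy is to exploit the identity $S(\widetilde{H}) = S(\widetilde{G}) \otimes A(K_2)$ already noted in the text, together with the fact that $\widetilde{H}$ is bipartite by construction as the bipartite double of $\widetilde{G}$, so as to reduce the assertion to an application of Theorem~\ref{ori-sign1}. The plan is to transport the spectral information across the chain
\[
{\rm Sp}_S(\widetilde{G}) \;\longleftrightarrow\; {\rm Sp}_S(\widetilde{H}) \;\longleftrightarrow\; {\rm Sp}_{\mathcal S}(H,\sigma),
\]
where the first double-arrow is governed by the Kronecker product structure and the second by Theorem~\ref{ori-sign1}.

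For the direction ``$\Rightarrow$'', I would start from ${\rm Sp}_S(\widetilde{G}) = \{\pm i\lambda_1, \ldots, \pm i\lambda_k, 0^{n-2k}\}$ and apply the standard eigenvalue rule for Kronecker products: because $A(K_2)$ has eigenvalues $\pm 1$, each eigenvalue $\mu$ of $S(\widetilde{G})$ with multiplicity $m$ produces both $\mu$ and $-\mu$ in $S(\widetilde{H})$, each with multiplicity $m$. This immediately yields ${\rm Sp}_S(\widetilde{H}) = \{(\pm i\lambda_1)^2, \ldots, (\pm i\lambda_k)^2, 0^{2(n-2k)}\}$. Since rank is multiplicative under Kronecker products, ${\rm rank}(S(\widetilde{H})) = (2k)\cdot 2 = 4k$, so Theorem~\ref{ori-sign1} applied to the bipartite oriented graph $\widetilde{H}$ converts the above statement into ${\rm Sp}_{\mathcal S}(H,\sigma) = \{(\pm\lambda_1)^2, \ldots, (\pm\lambda_k)^2, 0^{2(n-2k)}\}$, exactly as claimed.

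For the direction ``$\Leftarrow$'', the same chain runs in reverse. From the assumed $\mathcal S$-spectrum of $(H,\sigma)$, Theorem~\ref{ori-sign1} recovers ${\rm Sp}_S(\widetilde{H}) = \{(\pm i\lambda_1)^2, \ldots, (\pm i\lambda_k)^2, 0^{2(n-2k)}\}$. To pass back to ${\rm Sp}_S(\widetilde{G})$, I would use that $S(\widetilde{G})$ is real skew-symmetric, so its non-zero eigenvalues occur in conjugate pairs $\pm i\lambda_j$ of equal multiplicity; consequently the multiplicity of every non-zero eigenvalue of $S(\widetilde{G}) \otimes A(K_2)$ equals exactly twice its multiplicity in $S(\widetilde{G})$. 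Halving the multiplicities of $\pm i\lambda_j$ in ${\rm Sp}_S(\widetilde{H})$ then reconstructs ${\rm Sp}_S(\widetilde{G}) = \{\pm i\lambda_1, \ldots, \pm i\lambda_k, 0^{n-2k}\}$.

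The main obstacle I anticipate is the careful multiplicity bookkeeping, especially when some of the $\lambda_j$ happen to coincide: one must distinguish the multiset structure from the set of distinct eigenvalues, and verify that the halving step is unambiguous—which it is, by symmetry of ${\rm Sp}_S(\widetilde{G})$ about the origin. A secondary, more structural point to check is that the signature $\sigma$ on $H$ dictated by condition~(\ref{ss}) relative to the orientation inherited by the bipartite double is a valid input for Theorem~\ref{ori-sign1}; this is essentially guaranteed by the construction, but should be confirmed before that theorem is invoked.
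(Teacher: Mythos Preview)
The paper is a survey and does not supply its own proof of this statement; Theorem~\ref{ori-sign2} is simply quoted from \cite{stan2}, so there is no in-paper argument to compare against.

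That said, your proposal is correct and is the natural proof. The Kronecker identity $S(\widetilde{H}) = S(\widetilde{G}) \otimes A(K_2)$ combined with ${\rm Sp}_A(K_2)=\{1,-1\}$ indeed doubles every multiplicity (each eigenvalue $\mu$ of $S(\widetilde{G})$ contributes $\mu$ and $-\mu$ to $S(\widetilde{H})$, and since the spectrum of $S(\widetilde{G})$ is already symmetric about $0$, the net effect is multiplicity $2$ for every $\pm i\lambda_j$ and for $0$). The rank becomes $4k=2(2k)$, so Theorem~\ref{ori-sign1} applies to the bipartite $\widetilde{H}$ and yields the claimed $\mathcal S$-spectrum. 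Your observation that the symmetry of skew-symmetric spectra about the origin makes the halving step in the reverse direction unambiguous is exactly the point needed to close the equivalence. The only caveat you flag yourself---that the signature $\sigma$ associated to the induced orientation on $H$ via \eqref{ss} is the one intended---is indeed just a matter of unwinding the definitions and poses no obstacle.
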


\begin{figure}[h]
\begin{center}
\includegraphics[width=0.7\textwidth]{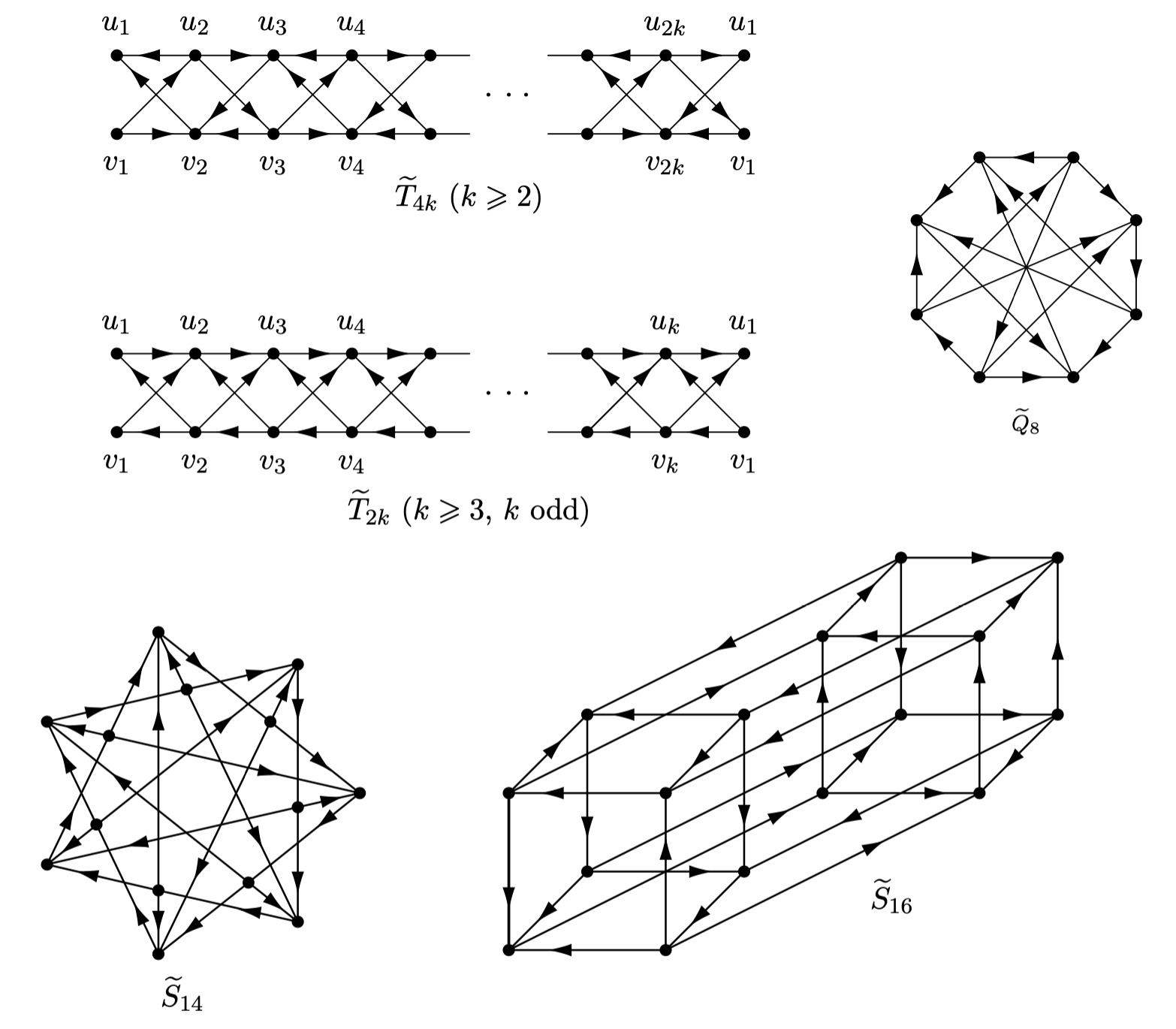}
\caption{\small Maximal connected oriented graphs whose skew spectral radius does not exceed $2$.}
\label{maximal2} 
\end{center}
\end{figure}

Theorems \ref{ori-sign1} and \ref{ori-sign2}, together with  Theorem \ref{sign-2} and \eqref{ss} are
the key ingredients to show that
if $\widetilde{G}$ is maximal  and bipartite in ${\mathcal G}_S^{\leqslant 2}$, than it is switching equivalent to an object in the set $\mathcal B=\{ \widetilde{S}_{14}, \, \widetilde{Q}_{16}, \, \widetilde{T}_{4k} \mid k \geqslant 2 \}$ (see Fig.~\ref{maximal2}). Moreover, Stani\'c proved that if $\widetilde{G}$ is a connected oriented graph such that ${\rm bd}(\widetilde{G}) \in \mathcal B$,  then $\widetilde{G}$ is switching equivalent to either $\widetilde{Q}_{8}$ or  $\widetilde{T}_{2k}$ with $k$ odd and $k \geqslant 3$.  In particular,  ${\rm bd}(\widetilde{Q}_8) = \widetilde{Q}_{16}$, and, for any odd $k \geqslant 3$, ${\rm bd}(\widetilde{T}_{2k}) = \widetilde{T}_{4k}$.
Stani\'c's results are summarized in the following theorem.

\begin{thm}{\rm \cite{stan1}}
Every maximal connected oriented graph whose skew spectral radius does not exceed $2$ is switching equivalent to one of following oriented graphs:
$$\widetilde{Q}_{8}; \qquad  \widetilde{S}_{14}; \qquad \widetilde{Q}_{16}; \qquad \widetilde{T}_{4h+2};  \qquad \widetilde{T}_{4(h+1)} \quad (h \in \N).$$
They are all illustrated in Fig.~\ref{maximal2}.
\end{thm}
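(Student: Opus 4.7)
The plan is to reduce the classification to the known list in Theorem~\ref{sign-2} via the orientation--signature correspondence \eqref{ss}, splitting on whether the underlying graph of $\widetilde{G}$ is bipartite.

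Suppose first that $\widetilde{G}=(G,\tilde{\sigma})$ is bipartite. Since $G$ has no odd cycles, the local product constraints in \eqref{ss} on pairs of adjacent edges are globally consistent and yield a signature $\sigma$ on $G$ associated with $\tilde{\sigma}$. Theorem~\ref{ori-sign1} then gives $\rho_{\mathcal S}(G,\sigma)=\rho_S(\widetilde{G})\leqslant 2$, so $(G,\sigma)\in\mathcal G_{\mathcal S}^{\leqslant 2}$. By Theorem~\ref{sign-2}, up to switching $(G,\sigma)$ is an induced subgraph of some $T_{2k}$, $S_{14}$, or $S_{16}$. The maximality of $\widetilde{G}$, combined with the fact that induced-subgraph inclusions at the signature level correspond to induced-subgraph inclusions at the orientation level via \eqref{ss}, forces $(G,\sigma)$ to coincide with one of these graphs in full. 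Lifting back via \eqref{ss}, and retaining only the values of $k$ for which a valid orientation lift exists, gives $\widetilde{G}$ switching-equivalent to $\widetilde{S}_{14}$, $\widetilde{Q}_{16}$, or $\widetilde{T}_{4k}$ with $k\geqslant 2$.

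Now suppose $\widetilde{G}$ is non-bipartite. Then $\mathrm{bd}(\widetilde{G})$ is connected and bipartite, and since $S(\mathrm{bd}(\widetilde{G}))=S(\widetilde{G})\otimes A(K_2)$, it satisfies $\rho_S(\mathrm{bd}(\widetilde{G}))=\rho_S(\widetilde{G})\leqslant 2$. The bipartite case above (or equivalently Theorem~\ref{ori-sign2} combined with Theorem~\ref{sign-2}) places $\mathrm{bd}(\widetilde{G})$, up to switching, in $\mathcal B=\{\widetilde{S}_{14},\widetilde{Q}_{16},\widetilde{T}_{4k}:k\geqslant 2\}$. It remains to determine which non-bipartite oriented graphs have their bipartite double in $\mathcal B$: an inspection of the fixed-point-free involutions swapping the two copies of the double rules out $\widetilde{S}_{14}$, singles out $\widetilde{Q}_8$ as the unique pre-image of $\widetilde{Q}_{16}$, and shows that $\widetilde{T}_{4k}=\mathrm{bd}(\widetilde{T}_{2k})$ arises precisely when $k$ is odd and $k\geqslant 3$ (which is also exactly when $\widetilde{T}_{2k}$ itself is non-bipartite). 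Combining the two cases gives the five families asserted in the statement.

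The main obstacle is this reconstruction step in the non-bipartite case. While Theorems~\ref{ori-sign1}--\ref{ori-sign2} together with Theorem~\ref{sign-2} funnel the problem cleanly toward the set $\mathcal B$, enumerating the admissible quotients of each member of $\mathcal B$ by an orientation-compatible free $\mathbb Z_2$-action, and verifying that the resulting oriented graph is still maximal in $\mathcal G_S^{\leqslant 2}$, requires careful combinatorial bookkeeping -- particularly on the toral tessellations $\widetilde{T}_{4k}$, where the parity of $k$ is what distinguishes the bipartite from the non-bipartite regime.
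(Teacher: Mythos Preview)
Your proposal is correct and follows essentially the same route as the paper's outline: split into the bipartite and non-bipartite cases, handle the bipartite case via the orientation--signature correspondence \eqref{ss} together with Theorems~\ref{ori-sign1} and~\ref{sign-2} to land in $\mathcal B=\{\widetilde{S}_{14},\widetilde{Q}_{16},\widetilde{T}_{4k}:k\geqslant 2\}$, and handle the non-bipartite case by passing to the bipartite double and invoking Theorem~\ref{ori-sign2} before classifying the pre-images of elements of $\mathcal B$. Your identification of the reconstruction step --- enumerating the orientation-compatible $\mathbb Z_2$-quotients of the members of $\mathcal B$ and checking the parity condition on $k$ for $\widetilde{T}_{4k}$ --- as the place where the real work sits is exactly what the paper (and Stani\'c's original argument) also isolates.
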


\begin{prop}
For any $\lambda \geq \sqrt{2+\sqrt{5}}$, there exists a sequence of oriented graphs (namely, oriented caterpillars) $\{\widetilde{T}_k\, |\, k\in \mathbb{N}\}$ such that $\lim\limits_{k \rightarrow \infty}\rho_{S}(\widetilde{T}_k) = \lambda$.
\end{prop}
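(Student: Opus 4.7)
The plan is to follow the exact strategy used for signed graphs in the proposition just above: first show that for any oriented tree the skew spectral radius equals the adjacency spectral radius of the underlying tree, and then transport Shearer's Theorem~\ref{she-lim} through this identity.

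For the reduction, I would let $\widetilde{T}=(T,\tilde\sigma)$ be any orientation of a tree $T$ and begin by constructing a signature $\sigma$ on $T$ associated with $\tilde\sigma$ in the sense of~\eqref{ss}. Because $T$ is acyclic, this signature can be built by fixing the value of $\sigma$ on one arbitrarily chosen edge and then propagating~\eqref{ss} outward along $T$; no cycle is ever closed, so no consistency obstruction arises. Theorem~\ref{ori-sign1} then applies, since $T$ is bipartite, and yields $\rho_S(\widetilde{T})=\rho_{\mathcal{S}}(T,\sigma)$. Since all signatures on a forest are switching equivalent (Section~6), this last quantity coincides with $\rho_{\mathcal{S}}(T,\sigma^+)=\rho_A(T)$. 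Combining the two identities gives the key equality $\rho_S(\widetilde{T})=\rho_A(T)$, valid for every orientation of every tree.

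Once this reduction is in hand, the proof is concluded by importing Shearer's construction verbatim, just as was done in the signed-graph proposition just above. For any fixed $\lambda \geqslant \sqrt{2+\sqrt{5}}$, Shearer produces a sequence of caterpillars $\{T_k\}$ with pairwise distinct $A$-spectral radii converging to $\lambda$. Picking any edge orientation $\tilde\sigma_k$ on each $T_k$ and setting $\widetilde{T}_k=(T_k,\tilde\sigma_k)$, the key equality gives $\rho_S(\widetilde{T}_k)=\rho_A(T_k)\to\lambda$, and the values remain pairwise distinct, as required by the definition of a limit point.

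The genuinely new content is confined to the reduction step, and within it the only delicate point is justifying the existence of an associated signature on a tree. I expect this to be the main (though still very mild) obstacle: it is really a bookkeeping argument resting on the acyclicity of $T$ rather than an analytic one, so the overall argument should be short.
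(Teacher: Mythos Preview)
Your proposal is correct and follows the same overall architecture as the paper's proof: import Shearer's caterpillars and transfer the convergence through the identity $\rho_S(\widetilde{T})=\rho_A(T)$ for oriented trees. The only difference lies in how that identity is obtained. The paper invokes the Shader--So result (already recalled earlier in Section~7) that ${\rm Sp}_S(G,\tilde\sigma)=i\,{\rm Sp}_A(G)$ for any orientation of a forest, which gives $\rho_S(\widetilde{T})=\rho_A(T)$ in one line. You instead construct an associated signature via~\eqref{ss}, apply Theorem~\ref{ori-sign1} to pass to the signed graph $(T,\sigma)$, and then use switching equivalence of all signatures on a forest to land on $\rho_A(T)$. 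Your route is a little longer but is self-contained within the machinery explicitly developed in the section, and your justification for the existence of an associated signature on a tree is fine (the local constraints at a vertex are automatically mutually consistent, and acyclicity prevents any global obstruction). Either argument works; the paper's is simply the shorter citation.
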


\begin{proof}
In order to prove Theorem \ref{she-lim},  for any  $\lambda \geq \sqrt{2+\sqrt{5}}$ Shearer found a  sequence of   nested caterpillars $\{T_k\, |\, k\in \mathbb{N}\}$ such that $\lim\limits_{k \rightarrow \infty}\rho_{A}(T_k) = \lambda$, In addition, Shader and So \cite{sha-so} showed that ${\rm Sp}_{S}(G, \tilde{\sigma}) = i{\rm Sp}_A(G)$ for any orientation $\tilde{\sigma}$ if and only if $G$ is a forest. Thereby, for any oriented tree $(T,\tilde{\sigma})$, we have $\rho_S(T,\tilde{\sigma}) = \rho_A(T)$. Thus, 
whatever orientation $\tilde{\sigma}_k$ we choose on the caterpillar $T_k$, we obtain 
$\rho_S(T_i,\tilde{\sigma}_i) \not= \rho_S(T_j,\tilde{\sigma}_j)$ whenever $i\not=j$ and $\lim\limits_{k \rightarrow \infty}\rho_{S}(T'_k) = \lambda$.
\end{proof}

\section{Adjacency tensor}

Since Lim \cite{lim} and Qi \cite{qi} independently introduced the eigenvalues of tensors or hypermatrices in 2005, the spectral theory of tensors has rapidly developed.  A {\it hypergraph} $H$ is a pair $(V,E)$, where $E \subseteq \mathcal{P}(V)$. The elements of $V=V(H)$ are referred to as vertices and the elements of $E=E(H)$ are called edges. A hypergraph H is said to be {\it k-uniform} for an integer $k \geq 2$ if, for all $e \in E(H)$, $ \left| e \right|=k$. To avoid trivial cases, we assume that $E$ is non-empty.

\begin{Def}{\rm \cite{coo-dut}}
Let $H$  be an $r$-uniform hypergraph. Then the adjacency tensor of $H$ is defined as $\mathcal{A}(H) = (a_{{i_1}{i_2}\ldots{i_r}})$ $k$th order and $n$-dimensional tensor, where
$$a_{{i_1}{i_2}\ldots{i_r}} =
\left\{
\begin{array}{ll}
\frac{1}{(r-1)!} & \mbox{if $\{i_1, i_2,\ldots,i_r \} \in E(H)$;}\\
0                & \mbox{otherwise}.
\end{array}
\right.
$$
\end{Def}
It is immediately seen that the adjacency tensor of hypergraphs is symmetric and generalizes the adjacency matrix of graphs. 
Let $[n]$ denote the set $\{1, 2, \dots, n\}$. The polynomial form $f_H(x)$: $\mathbb{R}^n \rightarrow \mathbb{R}$ is defined for any vector ${\rm x}=(x_1,\ldots, x_n) \in \mathbb{R}^n$ as
\begin{equation}\label{polpol}
f_H(x) =\sum_{(i_1,i_2,\dots, i_r) \in [n]^{r}}x_{i_1}x_{i_2}\cdots x_{i_r} = r\sum_{\{i_1,i_2,\ldots,i_r\} \in E(H)}x_{i_1}x_{i_2}\cdots x_{i_r}
\end{equation}

Cooper and Dutle \cite{coo-dut} call  $\lambda \in \mathbb C$ an {\it $H$-eigenvalue}  if there is a non-zero vector $x \in \mathbb C^n$ 
satisfying
$$ \sum_{(i_2,\dots, i_r) \in [n]^{r-1}}a_{j, i_2\dots i_r}x_{i_2}\cdots x_{i_r}= \lambda x_j^{r-1}$$
for all $j \in [n]$, and prove that the {\it $\mathcal{A}$-spectral radius} of $H$, i.e.\  the largest modulus among the $H$-eigenvalues, is also equal to
\begin{equation}\label{Hspect} \rho_{\mathcal{A}}(H) = \max_{\parallel {\rm x} \parallel_r=1}f_H(x), 
\end{equation}
the maximum value assumed by \eqref{polpol} over the $r$-norm unit sphere.
Note that some authors (e.g. \cite{ lu-man}) just skip the definition of an $H$-eigenvalue and define the {\it $\mathcal{A}$-spectral radius} of $H$ to be the real number \eqref{Hspect}. For more details on the eigenvalues of tensors see \cite{lim,qi}.

Lu and Man \cite{lu-man} obtained the smallest limit point of $\mathcal{A}$-spectral radii of connected $r$-uniform hypergraphs. In fact, they proved the following result.

\begin{thm}{\rm \cite{lu-man}}
The smallest limit point of $\mathcal{A}$-spectral radii of connected $r$-uniform hypergraphs is $\sqrt[r]4$.
\end{thm}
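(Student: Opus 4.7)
My overall plan is to mirror Hoffman's original argument for the adjacency matrix of graphs, taking the loose $r$-uniform path as the hypergraph counterpart of $P_n$. Concretely, let $P_n^{(r)}$ denote the loose path whose edges are $e_i=\{v_{(i-1)(r-1)+1},\ldots,v_{(i-1)(r-1)+r}\}$ for $1\leqslant i\leqslant n$, so that consecutive edges meet in exactly one vertex. The key claim driving the whole proof is that the sequence $\{\rho_{\mathcal{A}}(P_n^{(r)})\}$ is strictly increasing and satisfies $\lim_{n\to\infty}\rho_{\mathcal{A}}(P_n^{(r)})=\sqrt[r]{4}$. This immediately exhibits $\sqrt[r]{4}$ as a limit point.

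To prove the limit identity, I would work with the variational characterization \eqref{Hspect} and a suitable transfer-matrix / three-term recursion on the polynomial form $f_{P_n^{(r)}}$ restricted to non-negative maximizers: by the Perron-Frobenius theory for non-negative symmetric tensors one can assume the maximizer is non-negative and (by symmetry of $P_n^{(r)}$) take the two pendant coordinates equal, the ``inner'' entries of each edge equal, and the ``joint'' vertices forming a one-dimensional recursion. This reduces the problem to a one-variable polynomial recurrence whose limiting characteristic equation evaluates to $x^r=4$. Monotonicity of $\rho_{\mathcal{A}}(P_n^{(r)})$ follows since $P_n^{(r)}$ is a proper sub-hypergraph of $P_{n+1}^{(r)}$, using the standard monotonicity of the $\mathcal{A}$-spectral radius under taking connected sub-hypergraphs (this monotonicity is a direct consequence of \eqref{Hspect} applied to extensions by zero).

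For the lower bound, I would argue no value $\gamma<\sqrt[r]{4}$ can be a limit point. Assume for contradiction that $\{H_k\}$ is a sequence of connected $r$-uniform hypergraphs with pairwise distinct $\mathcal{A}$-spectral radii and $\rho_{\mathcal{A}}(H_k)\to\gamma<\sqrt[r]{4}$. Since $\rho_{\mathcal{A}}(H)\geqslant\sqrt[r]{\Delta(H)}$ (the analogue of $\rho_A(G)\geqslant\sqrt{\Delta(G)}$, obtainable by plugging a suitable indicator-style vector into \eqref{Hspect}), the maximum degrees $\Delta(H_k)$ are uniformly bounded. Consequently the sizes $|V(H_k)|$ must diverge, because there are only finitely many connected $r$-uniform hypergraphs on a bounded number of vertices with distinct spectral radii. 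A connected hypergraph with bounded maximum degree and arbitrarily many vertices necessarily has arbitrarily large diameter, and therefore, for every $n$, contains $P_n^{(r)}$ as a sub-hypergraph whenever $k$ is large enough. Applying the monotonicity of $\rho_{\mathcal{A}}$ once more yields $\rho_{\mathcal{A}}(H_k)\geqslant\rho_{\mathcal{A}}(P_n^{(r)})$, so passing to the limit in $k$ first and then in $n$ gives $\gamma\geqslant\sqrt[r]{4}$, a contradiction. Combined with step one, this forces the smallest limit point to be exactly $\sqrt[r]{4}$.

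The step I expect to be most delicate is the precise asymptotic computation $\rho_{\mathcal{A}}(P_n^{(r)})\to\sqrt[r]{4}$, because the tensor eigenvalue equation for $P_n^{(r)}$ is a system of nonlinear polynomial equations of degree $r-1$, and one must genuinely exploit the symmetry of the loose path together with a monotone fixed-point argument to extract the sharp limit; the sub-hypergraph monotonicity and the degree bound are comparatively routine, but should be stated carefully since Cauchy-type interlacing is not available for tensors and all comparisons have to be run through the variational characterization \eqref{Hspect}.
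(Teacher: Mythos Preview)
The paper is a survey and does not supply its own proof of this theorem; it simply quotes the result from \cite{lu-man}. Your outline mirrors the argument the paper \emph{does} give for signed graphs (Proposition~6.1), and the two pieces you flag as routine---monotonicity of $\rho_{\mathcal A}$ under sub-hypergraphs via \eqref{Hspect}, and the degree bound $\rho_{\mathcal A}(H)\geqslant\Delta(H)^{1/r}$ coming from the $d$-edge loose star---are indeed correct.

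There is, however, a genuine gap, and it is not where you expect it. You write: ``A connected hypergraph with bounded maximum degree and arbitrarily many vertices necessarily has arbitrarily large diameter, and therefore, for every $n$, contains $P_n^{(r)}$ as a sub-hypergraph.'' The first clause is fine; the ``therefore'' is false for $r\geqslant 3$. Take $r=4$ and the hypergraph with edge set $\bigl\{\{2i-1,2i,2i+1,2i+2\}:1\leqslant i\leqslant m\bigr\}$. Consecutive edges share exactly two vertices, non-consecutive edges are disjoint, the maximum degree is $2$, and the diameter is unbounded in $m$; yet no two edges meet in exactly one vertex, so even $P_2^{(4)}$ is not a sub-hypergraph. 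For graphs ($r=2$) this obstruction cannot occur, because two edges of a simple graph share at most one vertex---this is precisely why the diameter argument in Proposition~6.1 goes through without comment.

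What is missing is a \emph{linearity} step: one must first show that any two edges sharing $j\geqslant 2$ vertices already force $\rho_{\mathcal A}\geqslant\sqrt[r]{4}$. For $j=2$ the two-edge configuration is exactly $C_2^{(r)}$, and a direct optimisation of \eqref{Hspect} (with the symmetric ansatz $b$ on the two shared vertices and $a$ on the $2(r-2)$ leaves) gives $\rho_{\mathcal A}(C_2^{(r)})=2^{2/r}=\sqrt[r]{4}$; larger overlaps only increase the radius. Hence every $H_k$ with $\rho_{\mathcal A}(H_k)<\sqrt[r]{4}$ is linear, and \emph{then} a shortest edge-walk between two far-apart vertices is automatically a loose path (consecutive edges meet in one vertex by linearity, non-consecutive edges are disjoint by minimality), after which your comparison with $P_n^{(r)}$ goes through. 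This linearity reduction is not a technicality---it is a substantial part of the work in \cite{lu-man}, and your plan should name it explicitly rather than folding it into the diameter step.
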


For any real number $\rho$, we denote by $\mathcal G_{\mathcal A}^\rho$ (resp.,  $\mathcal G_{\mathcal A}^{<\rho}$) the set of uniform hypergraphs whose $\mathcal A$-spectral radius is equal to $\rho$ (resp. less than $\rho$). We also set $\rho_r= \sqrt[r]4$.

There is a rationale for the way many elements detected in $\mathcal G_{\mathcal A}^{<\rho}$ and in $\mathcal G_{\mathcal A}^{\rho}$  are denoted in \cite{lu-man} and below: they are the $r$-uniform counterpart of the Smith graphs listed in Theorem \label{otto}(i) and (ii); and the Smith graphs, in turn, are all simply-laced Dynkin diagrams.  In $\mathcal G_A^{<2}$ we find, in fact, $P_n$, $T_{1,1,n-1}$; $T_{1,2,2}$, $T_{1,2,3}$ and  $T_{1,2,4}$, which are the Dynkin diagrams usually denoted by $A_n$, $D_n$, $E_6$, $E_7$ and $E_8$ respectively. The graphs  $C_n$, $W_n$ and $K_{1,4}$, $T_{2,2,2}$, $T_{1,3,3}$ and $T_{1,2,5}$ are instead the extended Dynkin diagrams respectively known as $\tilde{A}_n$, $\tilde{D}_n$, $\tilde{E}_6$, $\tilde{E_7}$, and $\tilde{E_8}$.

\begin{figure}[h]
\begin{center}
\includegraphics[width=0.8\textwidth]{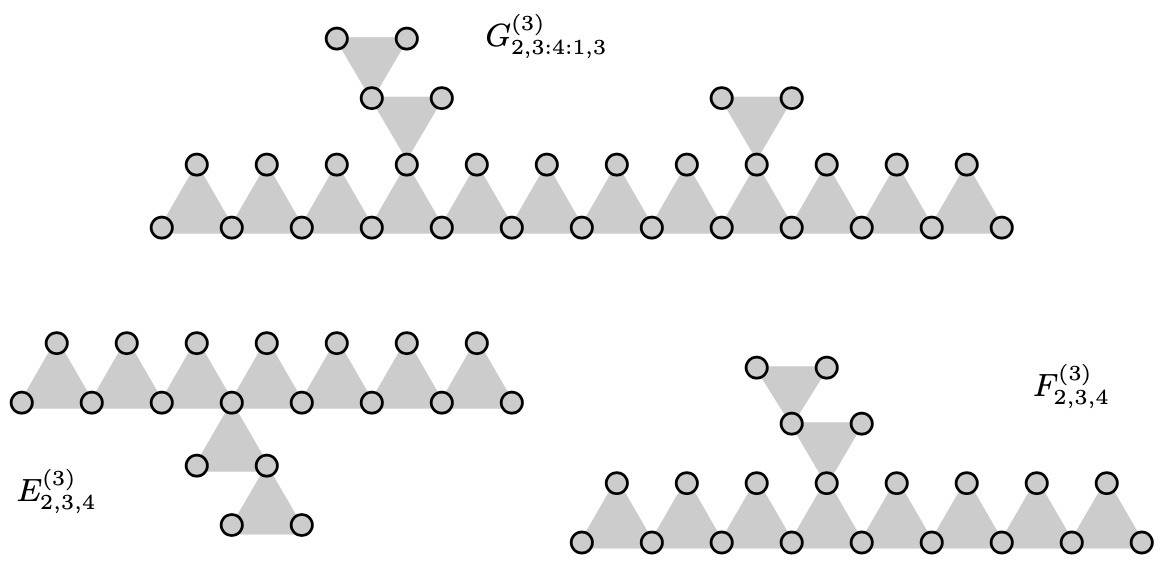}
\caption{ \label{fig400}  \small Examples of hypergraphs of type $E_{i,j,k}^{(3)}$, $F_{i,j,k}^{(3)}$ and $G^{(3)}_{i,j:k:l,t}$.}
\end{center}
\end{figure}

Apart from $C_2^{(3)}$, for all uniform hypergraphs defined in the rest of this section, we assume that adjacent edges has just one vertex in common. The $3$-uniform graphs $E^{(3)}_{i,j,k}$, $F^{(3)}_{i,j,k}$ and $G^{(3)}_{i,j:k:l,t}$   are respectively obtained: 
\begin{itemize}
\item[(E)] by attaching three hyperpaths of length $i, j, k$ to one vertex;
\item[(F)] by attaching three hyperpaths of length $i, j, k$ to each vertex of a fixed edge;
\item[(G)] by attaching four hyperpaths of length $i, j, l, t$ to four ending vertices of a hyperpath $A^{(3)}_{k+2}$ of length $k+2\geqslant 2$ (see Fig.~\ref{fig400}).
\end{itemize}
To make notation consistent with the $r=2$ case, we set: 
$E^{(3)}_6 :=E^{(3)}_{1,2,2}$, $E^{(3)}_7 :=E^{(3)}_{1,2,3}, E^{(3)}_8 :=E^{(3)}_{1,2,4}, \tilde{E}^{(3)}_6 :=E^{(3)}_{2,2,2}, \tilde{E}^{(3)}_7 :=E^{(3)}_{1,3,3}$, $\tilde{E}^{(3)}_8 :=E^{(3)}_{1,2,5}$, and $D^{(3)}_n :=E^{(3)}_{1,1,n-2}$.

We are now in the stage to describe the elements in  $\mathcal G_{\mathcal A}^{\rho_r}$ and in  $\mathcal G_{\mathcal A}^{<\rho_r}$. Lu and Man \cite{lu-man} first characterized the $3$-uniform hypergraphs in the two sets, finding the $r$-uniform hypergraphs for $r \geqslant 4$ at a later time.
\begin{thm}{\rm \cite{lu-man}}\label{lu-1}
Let $\rho_3 = \sqrt[3]4$. If a  $3$-uniform hypergraph $H$ belongs to $\mathcal G_{\mathcal A}^{<\rho_3}$, then it is one of the following graphs:
{\small   \begin{multicols}{2}
\begin{itemize}
\item[{\rm (i)}]
the hyperpath $A^{(3)}_m$ of $m\geqslant 1$ edges.\\[-.8em]
\item[{\rm (ii)}]
$D^{(3)}_m$ for $m \geqslant 3$;\\[-.8em]
\item[{\rm (iii)}]
$D'^{(3)}_m:=F^{(3)}_{1,1,m-3}$ for $m \geqslant 4$;\\[-.8em]
\item[{\rm (iv)}]
$B^{(3)}_m=F^{(3)}_{1,2,m-4}$ for $m \geqslant 5$;\\[-.8em]
\item[{\rm (v)}]
$B'^{(3)}_m:=G^{(3)}_{1,1:(m-6):1,1}$ for $m \geqslant 6$;\\[-.8em]
\item[{\rm (vi)}]
$\overline{B}^{(3)}_m:= G^{(3)}_{1,1:(n-7):1,2}$ for $m \geqslant 7$;\\[-.8em]
\item[{\rm (vii)}]
$BD^{(3)}_m$ for $m \geq 5$ (see Fig.~\ref{fig500});\\[-.8em]
\item[{\rm (viii)}]
Thirty-one exceptional 3-uniform hypergraphs: $E^{(3)}_6$; $E^{(3)}_7$; $E^{(3)}_8$; $F^{(3)}_{1,3,k}$ (for $3 \leqslant k \leqslant 13$); $F^{(3)}_{1,4,k}$ (for $4 \leqslant k \leqslant 7$);  $F^{(3)}_{1,5,5}$; $F^{(3)}_{2,2,k}$ (for $2 \leqslant k \leqslant 6$);  $F^{(3)}_{2,3,3}$;   and $G^{(3)}_{1,1:k:1,3}$ (for $0 \leqslant k \leqslant 5$).
\end{itemize}
\end{multicols} }
\end{thm}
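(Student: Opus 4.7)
The plan is to prove this via a forbidden sub-hypergraph argument, paralleling the classical Smith/Hoffman strategy used for graphs.

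First, I would establish the monotonicity principle: for a connected $r$-uniform hypergraph $H$ and a proper connected sub-hypergraph $H'$, one has $\rho_{\mathcal{A}}(H')<\rho_{\mathcal{A}}(H)$. This follows from the variational characterization \eqref{Hspect} together with a Perron–Frobenius–type argument for nonnegative symmetric tensors (extending any optimizer on $H'$ by zeros gives a feasible vector for $H$, and strict connectedness guarantees the inequality is strict). This tool is indispensable because it lets one conclude the whole theorem by locating a short list of \emph{minimal forbidden} $3$-uniform hypergraphs whose $\mathcal{A}$-spectral radius equals exactly $\rho_3=\sqrt[3]{4}$.

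Second, I would identify the minimal forbidden configurations. Guided by the analogy with Theorem~\ref{22+5}(ii) (the extended Dynkin diagrams $\tilde A_n,\tilde D_n,\tilde E_6,\tilde E_7,\tilde E_8$ sit exactly at spectral radius $2$ for $r=2$), I would pick the natural $3$-uniform lifts: the loose cycle $C_n^{(3)}$, the hyperstar $K^{(3)}_{1,4}$, and the $T$-shape hypertrees $\tilde E^{(3)}_6,\tilde E^{(3)}_7,\tilde E^{(3)}_8$, plus the analogues $\tilde D_n^{(3)}$ of the $D$-type. For each, compute $\rho_{\mathcal{A}}$ by setting up the eigen-equation $\sum a_{j i_2\cdots i_r}x_{i_2}\cdots x_{i_r}=\lambda x_j^{r-1}$, exploiting symmetry to reduce it to a scalar recurrence along each branch, and verifying $\rho_{\mathcal{A}}=\sqrt[3]{4}$.

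Third, with these forbidden hypergraphs in hand, I would enumerate by structural dichotomies. If $H$ is connected and $3$-uniform with $\rho_{\mathcal{A}}(H)<\rho_3$:
\textbf{(a)} $H$ is a hypertree (no cycle), since any cycle forces $C_n^{(3)}\subseteq H$ and hence $\rho_{\mathcal{A}}(H)\geq \rho_3$—unless the cycle contributes to a small exception, which forces analysis of the hypergraphs $BD_m^{(3)}$.
\textbf{(b)} Among hypertrees, bound the maximum vertex degree by $3$ (else $K^{(3)}_{1,4}$ is a sub-hypergraph).
\textbf{(c)} Stratify further by the number of \emph{branching vertices} (vertices lying in $\geq 3$ edges). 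Zero branches gives hyperpaths $A_m^{(3)}$; exactly one branch gives $E^{(3)}$-type; two adjacent branches give $F^{(3)}$-type; two branches separated along a hyperpath give $G^{(3)}$-type; and three or more branches are excluded by the forbidden $\tilde E$-type analogues. Refine the admissible leg-lengths $(i,j,k)$ or $(i,j:k:l,t)$ by invoking $\tilde E^{(3)}_6,\tilde E^{(3)}_7,\tilde E^{(3)}_8$ as obstructions—this pins down the exceptional parameter ranges listed in (viii) and the infinite families (ii)–(vi).

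The main obstacle will be step two: explicitly computing the $\mathcal{A}$-spectral radii of the extended $3$-uniform Dynkin-type hypergraphs and showing they equal $\sqrt[3]{4}$. Unlike the matrix case, the tensor eigen-equation is nonlinear, so the recurrence along a branch becomes a \emph{nonlinear} three-term recursion for the Perron components, and one must argue monotonicity of the characteristic polynomial roots as branch lengths grow. A related delicate issue is verifying \emph{strict} monotonicity under sub-hypergraph inclusion, which is needed to conclude that everything listed in (i)–(viii) lies strictly below $\rho_3$ rather than merely at $\rho_3$; the non-negativity and weak irreducibility of $\mathcal{A}(H)$ (via connectedness) are the tools to invoke here, but the tensor Perron–Frobenius framework requires careful handling of the loose cycles and of the $BD_m^{(3)}$ family, whose uniqueness on the borderline is the subtlest part of the enumeration.
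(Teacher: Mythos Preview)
The paper does not prove this theorem: it is a survey, and Theorem~\ref{lu-1} is quoted from Lu and Man \cite{lu-man} as a known result, with no argument given here. There is therefore no ``paper's own proof'' to compare your proposal against.

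On the proposal itself, the broad strategy (monotonicity plus forbidden sub-hypergraphs plus structural enumeration) is the right one and is essentially what Lu and Man do. But two points in your sketch would not go through as written. First, your structural dichotomy in step~(c) misses the key feature that distinguishes the $3$-uniform case from the graph case: branching can occur at an \emph{edge} as well as at a vertex. The $F$-type hypergraphs attach three hyperpaths to the three vertices of a single hyperedge, and the $G$-type hypergraphs have branching at both endpoints of a long hyperpath; neither fits your description of ``two adjacent branches'' or ``two branches separated along a hyperpath'' if a branch is tied to a vertex of degree $\geq 3$. You would need a taxonomy that accounts for edges with two or three non-leaf vertices. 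Second, your list of minimal forbidden configurations is far too short. The hypergraphs sitting exactly at $\rho_3$ are those in Theorem~\ref{lu-2}, and that list contains several infinite families ($\tilde D^{(3)}_m$, $\tilde B^{(3)}_m$, $\widetilde{BD}^{(3)}_m$) and a dozen sporadic ones beyond the five extended-Dynkin analogues you name; all of these are needed to pin down the thirty-one exceptional parameter ranges in~(viii). Finally, the $BD^{(3)}_m$ family in~(vii) is a hypertree family (combining a $D$-type branch with a $B'$-type branch), not something arising from a cycle; your remark in step~(a) suggests some confusion about its structure.
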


\begin{figure}[h]
\begin{center}
\includegraphics[width=0.7\textwidth]{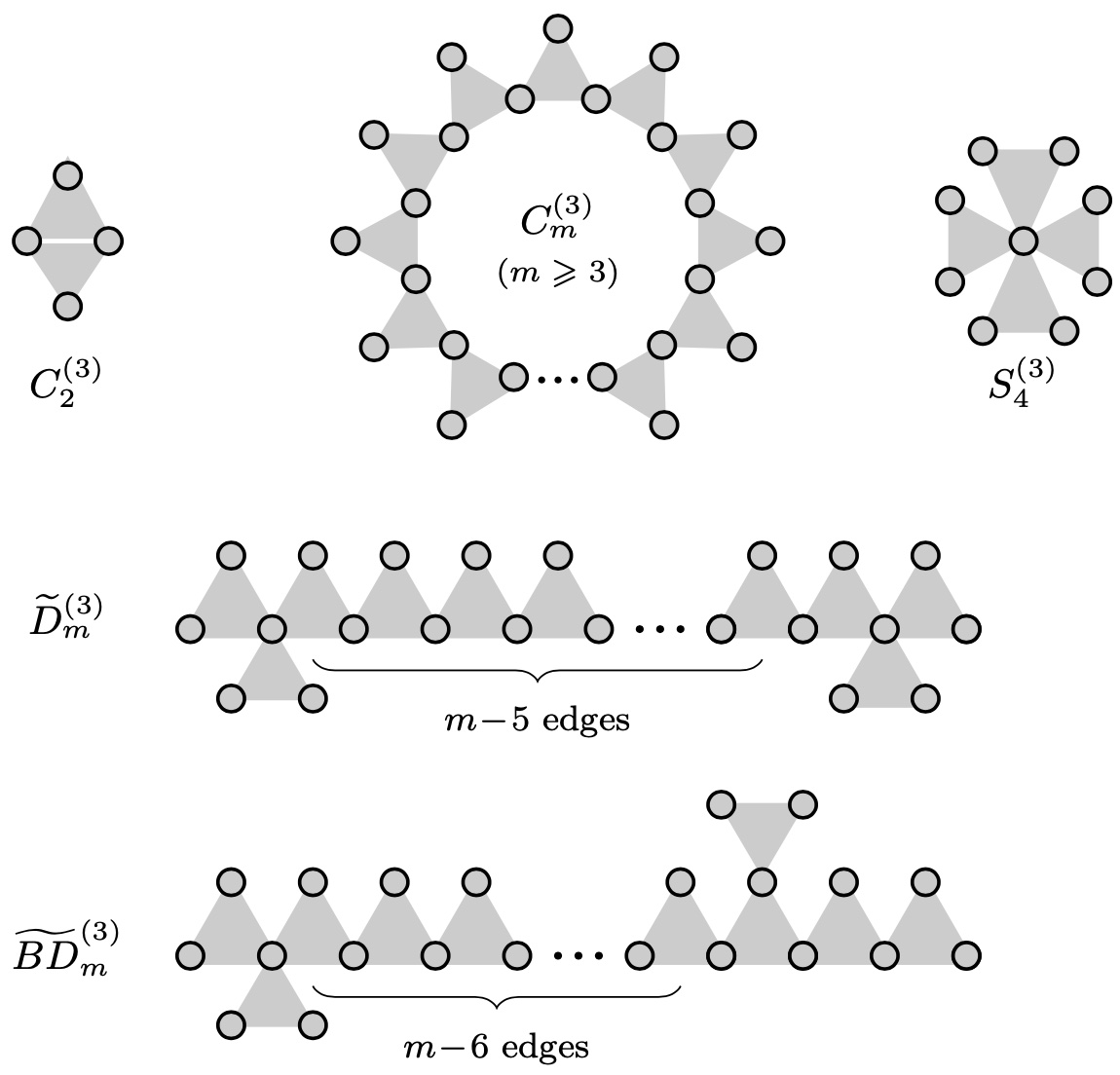}
\caption{ \label{fig500}  \small Some hypergraphs involved in Theorems~\ref{lu-1} and~\ref{lu-2}.}
\end{center}
\end{figure}

\begin{thm}{\rm \cite{lu-man}}\label{lu-2}
Let $\rho_3 = \sqrt[3]4$. If a  $3$-uniform hypergraph $H$ belongs to ${\mathcal G}_{\mathcal A}^{\rho_3}$, then $H$ is one of the following hypergraphs:
\begin{multicols}{2}
\begin{itemize}
\item[{\rm (i)}]
the hypercycle $C^{(3)}_m$ for $m \geqslant 3$;\\[-.5em]
\item[{\rm (ii)}]
$\tilde{D}^{(3)}_m$ for $m \geq 5$;\\[-.5em]
\item[{\rm (iii)}]
$\tilde{B}^{(3)}_m:=G^{(3)}_{1,2:(m-8):1,2}$ for $m \geq 8$;\\[-.5em]
\item[{\rm (iv)}]
$\widetilde{BD}^{(3)}_m$ for $m \geq 6$ (see Fig.~\ref{fig500});
\item[{\rm (v)}]
Twelve exceptional 3-uniform hypergraphs: $C^{(3)}_2, S^{(3)}_4$, $\tilde{E}^{(3)}_6, \tilde{E}^{(3)}_7$, $\tilde{E}^{(3)}_8$, $F^{(3)}_{2,3,4}$, $F^{(3)}_{2,2,7}$, $F^{(3)}_{1,5,6}$, $F^{(3)}_{1,4,8}$, $ F^{(3)}_{1,3,14}$, $G^{(3)}_{1,1:0:1,4}$, and $G^{(3)}_{1,1:6:1,3}$. (see Fig.~\ref{fig400} and~\ref{fig500}.)
\end{itemize}
\end{multicols}
\end{thm}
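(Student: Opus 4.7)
The plan is to mirror Smith's classical strategy for characterising graphs with $\rho_A=2$, splitting the argument into a \emph{construction step} (each hypergraph in the list does attain $\sqrt[3]{4}$) and a \emph{classification step} (no other $3$-uniform hypergraph does). Throughout, I would lean on the variational formula~\eqref{Hspect} and on the Perron--Frobenius theory for nonnegative symmetric tensors: for any connected $r$-uniform hypergraph there exists a positive eigenvector $x>0$ with $(\mathcal{A}(H)x^{r-1})_j=\rho_{\mathcal{A}}(H)\,x_j^{r-1}$ for every $j\in V(H)$.

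For the construction step, the hypercycle $C^{(3)}_m$ (with $m\geqslant 3$) is tackled by exhibiting a Perron-type eigenvector assigning value $a$ to each ``corner'' vertex of degree $2$ and value $b$ to each ``middle'' vertex of degree $1$; the eigenequation collapses to $\lambda a^2=2ab$ and $\lambda b^2=a^2$, giving $\lambda^3=4$. The three infinite families $\tilde{D}^{(3)}_m$, $\tilde{B}^{(3)}_m$ and $\widetilde{BD}^{(3)}_m$ are handled analogously: weights are constant on symmetry orbits and evolve geometrically along each pendant hyperpath, and the eigenequation forces the common ratio to satisfy $\lambda^3=4$. The twelve sporadic hypergraphs can be dispatched by a finite check -- either by producing an explicit Perron eigenvector, or by using the $\mathcal{A}$-characteristic polynomial in the sense of Qi for the small orders involved.

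The classification step relies on a key preliminary lemma of strict subgraph-monotonicity: if $H'\subsetneq H$ is a proper connected sub-hypergraph of a connected $H$, then $\rho_{\mathcal{A}}(H')<\rho_{\mathcal{A}}(H)$. Granting this, any connected $H$ with $\rho_{\mathcal{A}}(H)=\sqrt[3]{4}$ must be ``minimal'' in the sense that all of its proper connected sub-hypergraphs have $\rho_{\mathcal{A}}<\sqrt[3]{4}$ and therefore appear in Theorem~\ref{lu-1}. The classification then reduces to a structural case analysis, stratified by maximum degree, presence or absence of a hypercycle, and length of the longest linear hyperpath. In the unbounded regimes of the order $m$ only the four parametric families can arise, while the bounded regimes produce precisely the twelve exceptional hypergraphs.

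The main obstacle is Step~2. Unlike in the matrix case, Cauchy interlacing and the Courant--Fischer minimax are not directly available for symmetric tensors, so the strict-monotonicity lemma demands a careful argument -- perturbing the Perron eigenvector along a newly added edge, or invoking weak irreducibility of the tensor power iteration. In addition, the case analysis must be carried out without omission, matching exactly the twelve sporadic hypergraphs and ruling out all near-misses; keeping this enumeration complete is the most delicate part of the plan.
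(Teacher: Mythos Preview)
The paper does not contain a proof of this statement. Theorem~\ref{lu-2} is a cited result from Lu and Man \cite{lu-man}; the present article is a survey, and this theorem is reproduced verbatim as part of the account of the Hoffman program for adjacency tensors, with no argument supplied here. There is therefore no ``paper's own proof'' against which to compare your proposal.

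That said, your outline is broadly the right shape and is in fact close to how Lu and Man proceed in \cite{lu-man}: they too split into a verification step (exhibiting eigenvectors for the listed hypergraphs) and a classification step driven by strict monotonicity of $\rho_{\mathcal A}$ under taking proper connected sub-hypergraphs, combined with the list in Theorem~\ref{lu-1}. Your identification of the monotonicity lemma as the crux is accurate; Lu and Man handle it via the variational characterisation~\eqref{Hspect} rather than interlacing, so your suggested workaround (perturb the Perron eigenvector and use weak irreducibility) is on the right track. Be aware, however, that what you have written is a plan rather than a proof: the case analysis you allude to in the classification step is long and must be executed in full, and the twelve sporadic cases cannot simply be ``dispatched by a finite check'' without either producing the eigenvectors explicitly or citing a computation. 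If you intend to supply an independent proof rather than cite \cite{lu-man}, those details must be filled in.
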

\medskip

A hypergraph $H=(V, E)$ is called {\it reducible} if every edge $e$ contains at least one leaf vertex $v_e$. In this case, we can define an $(r-1)$-uniform multi-hypergraph $H=(V, E)$ by removing $v_e$ from each edge $e$, i.e., $V'=V \backslash \{v_e | e \in E\}$ and $ E=\{e -v_e | e \in E\}$. We say that $H'$ is {\it reduced} from $H$, whereas $H$ {\it extends}  $H'$. As proved in \cite{lu-man}, If $H$ extends $H'$, then $H \in \mathcal G_{\mathcal A}^{\rho_r}$ (resp., $H \in \mathcal G_{\mathcal A}^{<\rho_r}$) if and only if 
$H' \in \mathcal G_{\mathcal A}^{\rho_{r-1}}$ (resp., $H' \in \mathcal G_{\mathcal A}^{<\rho_{r-1}}$).

\begin{thm}{\rm \cite{lu-man}}\label{lu-3}
Let $r \geqslant 4$ and $\rho_r= \sqrt[r]4$. If an $r$-uniform 
hypergraphs lies in $\mathcal G_{\mathcal A}^{<\rho_r}$, then it must be one of following graphs:
\begin{itemize}
\item[{\rm (i)}]
$A^{(r)}_n$, $D^{(r)}_n$, $D'^{(r)}_n$, $B^{(r)}_n$, $B'^{(r)}_n$, $\bar{B}^{(r)}_n$, $BD^{(r)}_n$, $E^{(r)}_6$, $E^{(r)}_7, E^{(r)}_8$, $F^{(r)}_{2,3,3}$, $F^{(r)}_{2,2,j}$ (for $2 \leqslant j \leqslant 6$), $F^{(r)}_{1,3,j}$ (for $3 \leqslant j \leqslant 13$), $F^{(r)}_{1,4,j}$ (for $4 \leqslant j \leqslant 7$), $F^{(r)}_{1,5,5}$, and $G^{(r)}_{1,1:j:1,3}$ (for $0 \leqslant j \leq 5$). These are the $r$-uniform hypergraphs obtained by extending $r-3$ times the hypergraphs in the list of Theorem \ref{lu-1};\\[-.8em]
\item[{\rm (ii)}]
$H^{(r)}_{1,1,1,1}$, $H^{(r)}_{1,1,1,2}$, $H^{(r)}_{1,1,1,3}$, $H^{(r)}_{1,1,1,4}$. These are the $r$-uniform hypergraphs obtained by extending $r-4$ times the hypergraphs  in Fig.~\ref{fig600}.
\end{itemize}
\end{thm}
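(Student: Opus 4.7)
My approach is induction on $r \geqslant 4$, with the reduction-extension correspondence stated just before the theorem as the main engine. Specifically, since a reducible $r$-uniform $H$ satisfies $\rho_{\mathcal A}(H) < \rho_r$ if and only if its reduction $H'$ satisfies $\rho_{\mathcal A}(H') < \rho_{r-1}$, a hypergraph in $\mathcal G^{<\rho_r}_{\mathcal A}$ can be iteratively ``peeled'' until the chain reaches either the $3$-uniform floor of Theorem~\ref{lu-1} or an irreducible hypergraph at some intermediate level. Reading the chain in reverse, every member of the sought classification is obtained by extending either a $3$-uniform hypergraph from Theorem~\ref{lu-1} (this is case (i)) or some irreducible $4$-uniform hypergraph with $\rho_{\mathcal A} < \sqrt{2}$ (this is case (ii)). Hence the whole argument reduces to establishing two irreducibility statements, which I would prove separately:
\begin{itemize}
\item[(A)] For $r \geqslant 5$, no irreducible $r$-uniform hypergraph lies in $\mathcal G^{<\rho_r}_{\mathcal A}$.
\item[(B)] The only irreducible $4$-uniform hypergraphs in $\mathcal G^{<\rho_4}_{\mathcal A}$ are $H^{(4)}_{1,1,1,j}$ for $j\in\{1,2,3,4\}$.
\end{itemize}

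To prove (A), I would fix an edge $e_0$ of $H$ all of whose $r$ vertices have degree at least $2$ (possible by irreducibility), pick a second edge $e_i$ through each $v_i \in e_0$, and bound $\rho_{\mathcal A}(H)$ from below by the spectral radius of the sub-hypergraph $H_0$ spanned by $e_0 \cup e_1 \cup \dots \cup e_r$. Plugging a two-valued test vector ($\alpha$ on the vertices of $e_0$, $\beta$ elsewhere) into \eqref{polpol} and optimising the ratio $\alpha/\beta$ subject to $\|x\|_r = 1$ should yield $\rho_{\mathcal A}(H_0) \geqslant \sqrt[r]{4}$ whenever $r \geqslant 5$. The point is that $H_0$ has $r$ ``outer'' edges contributing to the polynomial form with coefficient $\beta^{r-1}\alpha$, while the ``core'' edge $e_0$ contributes $\alpha^{r}$; for $r \geqslant 5$ this concentration of edges at $e_0$ is already enough to exceed the threshold $\rho_r$. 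For (B), I would perform a similar local analysis on the degree-preserving edge $e_0$: classify the possible pendant-hyperpath configurations at $v_1,v_2,v_3,v_4$, show that any configuration other than $(1,1,1,j)$ forces $\rho_{\mathcal A}(H) \geqslant \sqrt{2}$ (in particular, $(1,1,2,j)$ with $j \geqslant 2$ is already ruled out), and verify directly that $\rho_{\mathcal A}(H^{(4)}_{1,1,1,j}) < \sqrt 2$ exactly when $j \leqslant 4$.

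The hard part will be the threshold computation in (B), because the transition at $j = 5$ is sharp and cannot be read off from a soft monotonicity argument. I would attack it by writing the eigenequation $\mathcal A(H^{(4)}_{1,1,1,j})\,x = \lambda x^{[3]}$ and exploiting the symmetry of the three length-$1$ branches to collapse the system to a scalar recurrence along the one ``long'' branch; the resulting auxiliary polynomial can then be evaluated at $\lambda = \sqrt{2}$ and tracked as $j$ varies, showing that its relevant root crosses $\sqrt 2$ precisely between $j=4$ and $j=5$. This mirrors the classical $r=2$ Smith--Hoffman situation, where the graph $T_{1,2,5}$ achieves $\rho_A = 2$ and $T_{1,2,k}$ for $k \geqslant 6$ crosses the threshold. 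The analogous spectral verifications for $r \geqslant 5$ in step (A) are expected to be easier, since the ``local density'' of an irreducible edge grows linearly in $r$ while the target $\rho_r = \sqrt[r]{4}$ decreases to $1$, so the lower bound obtained from the two-valued test vector is not tight.
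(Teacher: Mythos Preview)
The paper you are working from is a survey: Theorem~\ref{lu-3} is quoted verbatim from Lu and Man \cite{lu-man} and carries no proof here, so there is no in-paper argument to compare your proposal against. The only relevant ingredient the survey records is the reduction/extension lemma stated just above the theorem, and your inductive scheme is built precisely on that lemma, so at the level of architecture your plan is certainly the intended one.

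That said, two places in your sketch are thinner than they look. In step~(A) your two-valued test vector gives a lower bound of the form $r\bigl(\alpha^{r}+r\,\alpha\beta^{r-1}\bigr)$ on $f_{H_0}(x)$ only in the most favourable geometry (the outer edges $e_1,\dots,e_r$ pairwise disjoint outside $e_0$); if several $v_i$ share a neighbouring edge, the count of $\beta$-vertices in the constraint $\lVert x\rVert_r=1$ changes as well, and you need to check that the worst case still clears $\rho_r$ for every $r\geqslant 5$. This is believable but is an honest computation, not a remark.

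Step~(B) has a more serious gap. ``Irreducible'' only guarantees the existence of one edge $e_0$ whose four vertices all have degree $\geqslant 2$; it does \emph{not} say that the rest of $H$ consists of pendant hyperpaths glued to the vertices of $e_0$. Before you can ``classify the possible pendant-hyperpath configurations at $v_1,\dots,v_4$'' you must first prove that an irreducible $4$-uniform $H$ with $\rho_{\mathcal A}(H)<\sqrt{2}$ is forced to have exactly this star-with-tails shape: a single irreducible core edge, every other edge lying on a hyperpath hanging from one core vertex, and no two branches meeting again. That structural reduction is where most of the work in \cite{lu-man} actually sits (ruling out a second irreducible edge, ruling out branching on a tail, ruling out two tails sharing a non-core vertex, etc.), and your outline currently skips it. Once that shape is established, your plan for locating the $j=4/5$ threshold via the symmetric eigen-recurrence is the right endgame.
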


  \begin{figure}[h]
\begin{center}
\includegraphics[width=0.8\textwidth]{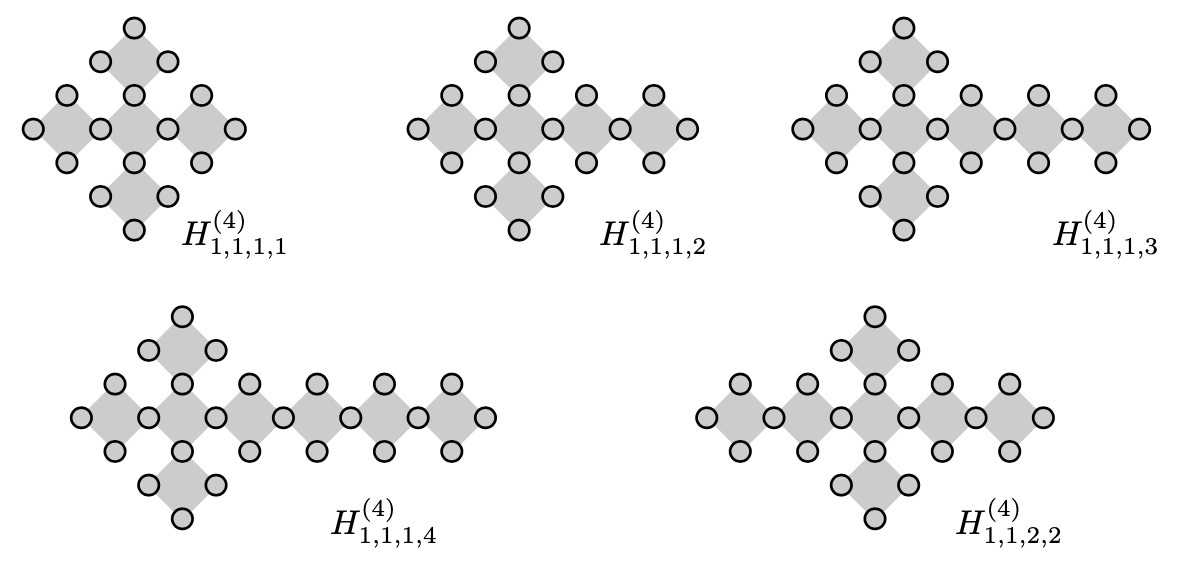}
\caption{ \label{fig600}  \small Some hypergraphs involved in Theorems~\ref{lu-3} and~\ref{lu-4}.}
\end{center}
\end{figure}

\begin{thm}{\rm \cite{lu-man}}\label{lu-4}
Let $r \geq 4$ and $\rho_r= \sqrt[r]4$. If an $r$-uniform hypergraph $H$ belongs to $\mathcal G_{\mathcal A}^{\rho_r}$, then $H$ must be one of the following graphs:
\begin{itemize}
\item[{\rm (i)}]
$C^{(r)}_n$, $\tilde{D}^{(r)}_n$, $\tilde{B}^{(r)}_n$, $\widetilde{BD}^{(r)}_n$, $C^{(r)}_2$, $S^{(r)}_4$,  $\tilde{E}^{(r)}_6$, $\tilde{E}^{(r)}_7$,  $\tilde{E}^{(r)}_8$, $F^{(r)}_{2,3,4}$, $F^{(r)}_{2,2,7}$, $F^{(r)}_{1,5,6}$, $F^{(r)}_{1,4,8}$,$F^{(r)}_{1,3,14}$, $G^{(r)}_{1,1:0:1,4}$, and $G^{(r)}_{1,1:6:1,3}$. These are the $r$-uniform hypergraphs obtained by extending $r-3$ times the hypergraphs in the list of Theorem \ref{lu-2};
\item[{\rm (ii)}]
the hypergraph $H^{(r)}_{1,1,2,2}$ obtained by extending $r-4$ times the hypergraph $H^{(4)}_{1,1,2,2}$ in Fig.~\ref{fig600}.
\end{itemize}
\end{thm}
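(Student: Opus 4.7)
The plan is to induct on $r \geqslant 4$, taking Theorem \ref{lu-2} as the base case and leveraging the reduction principle from \cite{lu-man} recalled in the excerpt: whenever an $r$-uniform hypergraph $H$ extends an $(r-1)$-uniform hypergraph $H'$, we have $H \in \mathcal G_{\mathcal A}^{\rho_r}$ if and only if $H' \in \mathcal G_{\mathcal A}^{\rho_{r-1}}$. The goal is to show that every $H \in \mathcal G_{\mathcal A}^{\rho_r}$ is obtained either by iterated extension of some hypergraph appearing in Theorem \ref{lu-2} (yielding list (i)) or by iterated extension of $H^{(4)}_{1,1,2,2}$ (yielding list (ii)).

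I would first dispose of the \emph{reducible} case. Let $H \in \mathcal G_{\mathcal A}^{\rho_r}$ be reducible with reduction $H'$; then $H' \in \mathcal G_{\mathcal A}^{\rho_{r-1}}$. At $r=4$, $H'$ lies in the list of Theorem \ref{lu-2}; for $r \geqslant 5$, $H'$ already lies in list (i) or (ii) at level $r-1$ by the inductive hypothesis. In either case, re-extending once places $H$ in list (i) or (ii) at level $r$. This reduces the theorem to the classification of \emph{irreducible} members of $\mathcal G_{\mathcal A}^{\rho_r}$ for $r \geqslant 4$.

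The substance of the proof is therefore to show that the only irreducible hypergraph in $\mathcal G_{\mathcal A}^{\rho_r}$ with $r \geqslant 4$ is $H^{(4)}_{1,1,2,2}$ (necessarily at $r=4$). Given an irreducible $H$, pick an edge $e=\{v_1,\ldots,v_r\}$ in which every vertex has degree at least $2$, so each $v_j$ belongs to some other edge $e_j$. Using the variational characterisation $\rho_{\mathcal A}(H)=\max_{\|x\|_r=1} f_H(x)$, one tests $f_H$ on a vector concentrated on $e \cup \bigcup_j e_j$ to produce a lower bound on $\rho_{\mathcal A}(H)$ that strictly exceeds $\rho_r=\sqrt[r]{4}$ unless the local structure around $e$ is extremely rigid. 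The finitely many surviving configurations are then cross-checked against the forbidden substructures coming from Theorems \ref{lu-1} and \ref{lu-2}. At $r=4$ exactly one configuration satisfies all the constraints, namely $H^{(4)}_{1,1,2,2}$ in Fig.~\ref{fig600}, whose spectral radius one verifies to equal $\sqrt[4]{4}$ directly by writing the tensor eigen-equation, exploiting the automorphisms of the hypergraph to make the Perron eigenvector constant on orbits, and solving the resulting small polynomial system. At $r \geqslant 5$, the additional vertex in each edge either forces a leaf (contradicting irreducibility) or produces a new pendant structure that pushes the spectral radius strictly past $\rho_r$, leaving no survivor.

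The main obstacle is precisely the test-vector and Perron-eigenvector analysis in the irreducible case: one must simultaneously sharpen the lower bounds coming from test vectors to rule out a large supply of candidate local configurations, while ensuring that $H^{(4)}_{1,1,2,2}$ is not ruled out by the same estimates. A potentially cleaner route would be to adapt the edge-shifting/hyperedge-moving operations employed in \cite{lu-man} for the $3$-uniform classification of Theorem \ref{lu-2}: exhibit a local move that strictly increases $\rho_{\mathcal A}$ whenever an irreducible hypergraph of uniformity $\geqslant 5$ is present, and then show that $H^{(4)}_{1,1,2,2}$ is the unique fixed point of the analogous move at uniformity $4$. Combined with the reducible case, this completes the classification.
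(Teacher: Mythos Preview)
The paper is a survey and does not supply its own proof of this theorem; it simply quotes the result from \cite{lu-man}. So there is no argument in the paper to compare your proposal against line by line.

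That said, your outline correctly isolates the one structural tool the paper does record from \cite{lu-man}: the extension/reduction principle that $H \in \mathcal G_{\mathcal A}^{\rho_r}$ if and only if $H' \in \mathcal G_{\mathcal A}^{\rho_{r-1}}$ whenever $H$ extends $H'$. Your treatment of the reducible case via induction on $r$ is exactly how this principle is meant to be used, and it does reduce the problem to classifying the irreducible members of $\mathcal G_{\mathcal A}^{\rho_r}$ for $r \geqslant 4$.

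Where your proposal is not yet a proof is the irreducible case, and you flag this yourself. The paragraph beginning ``pick an edge $e$ in which every vertex has degree at least $2$'' sketches a plausible strategy (test vectors on a local neighbourhood, then cross-check against forbidden substructures) but does not carry it out: no explicit test vector is written down, no inequality is derived, and no enumeration of surviving configurations is given. The claim that for $r \geqslant 5$ ``the additional vertex in each edge either forces a leaf \ldots\ or produces a new pendant structure that pushes the spectral radius strictly past $\rho_r$'' is asserted rather than argued. Likewise, the verification that $\rho_{\mathcal A}(H^{(4)}_{1,1,2,2}) = \sqrt[4]{4}$ is described but not performed. As written, the proposal is a correct high-level plan whose decisive step remains open; to turn it into a proof you would need to execute the irreducible analysis in full, which is the actual content of the Lu--Man argument.
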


Taken a careful look to the proofs in \cite{lu-man}, our experience suggests that the next considerable limit point for $\mathcal{A}$-spectral radius of connected $r$-uniform hypergraphs should be the number $\sqrt[r]{2+\sqrt{5}}$.

\begin{prob}
For the adjacency tensors of connected $r$-uniform hypergraphs,
\begin{itemize}
\item[{\rm (i)}]
determine the limit points of $\mathcal{A}$-spectral radius less than $\sqrt[r]{2+\sqrt{5}}$, and further identify all of them;
\item[{\rm (ii)}]
establish whether each real number exceeding $\sqrt[r]{2+\sqrt{5}}$ is an $\mathcal{A}$-limit point;
\item[{\rm (iii)}]
characterize the $r$-uniform hypergraphs whose $\mathcal{A}$-spectral radius is  at most $\sqrt[r]{2+\sqrt{5}}$.
\end{itemize}
\end{prob}

Other interesting  fields of investigation  are the {\it signless Laplacian tensor} and {\it Laplacian tensor} of the uniform hypergraph $H$, defined as  $\mathcal{Q}(H) = \mathcal{D}(H) + \mathcal{A}(H)$ and $\mathcal{L}(H) = \mathcal{D}(H) - \mathcal{A}(H)$, where
$\mathcal{D}(H)$ is the {\it diagonal tensor} of order $k$ and dimension $n$, whose diagonal entry $D_{ii \ldots i}$ is the degree of the vertex $i$ for all $i \in [n]$ (see  \cite{qi-LQ}). As far as we know, the Hoffman program with respect to these two tensors haven't yet been studied. Results concerning the Hoffman program for the (signless)  Laplacian matrices in Sections 3 and 4 bring us to pose the following problem.

\begin{prob}
For the Laplacian and the signless Laplacian tensors of connected $r$-uniform hypergraphs,
\begin{itemize}
\item[{\rm (i)}]
prove that the smallest limit point of the $\{\mathcal{L},\mathcal{Q}\}$-spectral radius is $\sqrt[r]{16}$.
\item[{\rm (ii)}]
characterize the  $r$-uniform hypergraphs with $\{\mathcal{L},\mathcal{Q}\}$-spectral radius  at most $\sqrt[r]{16}$.
\item[{\rm (iii)}]
determine the  $\{\mathcal{L},\mathcal{Q}\}$-limit points which are less than $\sqrt[r]{2+\varepsilon}$;
\item[{\rm (iv)}]
establish whether each real number exceeding $\sqrt[r]{2+\varepsilon}$ is an  $\{\mathcal{L},\mathcal{Q}\}$-limit point;
where $\varepsilon=\frac{{\;}1{\;}}{3}\left((54 - 6\sqrt{33})^{\frac{{\;}1{\;}}{3}} + (54 + 6\sqrt{33})^{\frac{{\;} 1{\;}}{3}} \right)$ like in Section 2.
\item[{\rm (v)}]
characterize the $r$-uniform hypergraphs with $\{\mathcal{L},\mathcal{Q}\}$-spectral radius at most $\sqrt[r]{2+\sqrt{5}}$.
\end{itemize}
\end{prob}

\section{$A_\alpha$-matrix}

Nikiforov \cite{niki} defined the $A_{\alpha}$-matrix of a graph $G$ to be the convex linear combination $$A_{\alpha}(G)=\alpha D(G)+(1-\alpha)A(G), \quad \alpha \in [0,1].$$ Such matrix not only merges the $A$-spectra and $Q$-spectra, but offer a different perspective to generalize and deepen the spectral properties of graphs.
Clearly, $$A(G) = A_0(G), \;\; Q(G)=2A_{1/2}(G),\;\; \text{and} \;\;  L(G)=\frac{1}{\alpha-\beta}(A_\alpha(G)-A_\beta(G)) \;\; \text{ for all $\alpha \not=\beta$}.$$
An interesting literature on Nikiforov's matrix is growing rapidly; from the subsequent papers by Nikiforov {\em et al.} \cite{niki-pas,niki-rojo} to the recent applications to signed, mixed and gain graphs \cite{BBC, LW}, there are already more than fifty published papers on the spectral properties of the $A_{\alpha}$-matrix.

A first attempt to study the limit points of the $A_{\alpha}$-spectral radius of graphs has been already performed.

\begin{thm}{\rm \cite{WWXB}}
The smallest $A_\alpha$-limit point for the $A_{\alpha}$-spectral radius of graphs is $2$.
\end{thm}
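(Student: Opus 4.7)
The plan is to adapt the two-step argument used in Section~6 for the proposition on $\mathcal S$-spectra. I would first verify that $2$ is an $A_\alpha$-limit point, and then rule out any smaller one.

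For the first step I would take the sequence of paths $\{P_n\}_{n\in\mathbb N}$. Since $A_\alpha(P_n)$ is a non-negative irreducible symmetric matrix and $P_n$ embeds as a proper induced subgraph of $P_{n+1}$, Perron--Frobenius gives $\rho_{A_\alpha}(P_n)<\rho_{A_\alpha}(P_{n+1})$, so the radii are pairwise distinct. To pin down the limit, I would combine two bounds. Weyl's inequality applied to $A_\alpha(P_n)=\alpha D(P_n)+(1-\alpha)A(P_n)$ yields
\[
\rho_{A_\alpha}(P_n)\leq\alpha\Delta(P_n)+(1-\alpha)\rho_A(P_n)=2\alpha+2(1-\alpha)\cos\!\left(\tfrac{\pi}{n+1}\right),
\]
while the Rayleigh quotient against the Perron eigenvector $v_i=\sin(i\pi/(n+1))$ of $A(P_n)$ produces a matching lower bound of the form $2\alpha+2(1-\alpha)\cos(\pi/(n+1))-O(1/n)$. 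Both sides tend to $2$, so $\rho_{A_\alpha}(P_n)\to 2$.

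For the second step I would argue by contradiction: suppose $\gamma<2$ is an $A_\alpha$-limit point, witnessed by a sequence $\{G_k\}$ with pairwise distinct $A_\alpha$-spectral radii converging to $\gamma$. Since the diagonal of $A_\alpha(G_k)$ is $\alpha D(G_k)$, the elementary bound $\rho_{A_\alpha}(G_k)\geq\max_v (A_\alpha(G_k))_{vv}$ gives $\alpha\Delta(G_k)\leq\rho_{A_\alpha}(G_k)$ when $\alpha>0$; for $\alpha=0$, the classical inequality $\sqrt{\Delta(G_k)}\leq\rho_A(G_k)$ serves the same purpose. Either way $\Delta(G_k)$ is uniformly bounded, and the graphs being pairwise non-isomorphic then forces $|V(G_k)|\to\infty$ and hence $\diam(G_k)\to\infty$. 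After passing to a subsequence, $G_{k_n}$ therefore contains an induced $P_n$ (any geodesic is an induced path).

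The argument is completed by the following comparison principle for induced subgraphs. If $H$ is an induced subgraph of $G$, then the principal submatrix $A_\alpha(G)[V(H)]$ shares its off-diagonal entries with $A_\alpha(H)$ (because $H$ is induced) and dominates it on the diagonal (because $d_G(v)\geq d_H(v)$); entrywise domination of non-negative symmetric matrices yields $\rho_{A_\alpha}(H)\leq\rho(A_\alpha(G)[V(H)])$, and Cauchy interlacing gives $\rho(A_\alpha(G)[V(H)])\leq\rho_{A_\alpha}(G)$. Applying this with $H=P_n$ and $G=G_{k_n}$ and letting $n\to\infty$ yields $2=\lim_n\rho_{A_\alpha}(P_n)\leq\gamma<2$, a contradiction. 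I expect the most delicate point to be the matching lower bound $\rho_{A_\alpha}(P_n)\to 2$ in the first step (ensuring that the endpoint degree defects of $P_n$, where $d=1$ rather than $2$, do not depress the limit below $2$); this is a standard tridiagonal Rayleigh-quotient calculation, after which the rest of the argument runs parallel to the signed-graph proposition of Section~6.
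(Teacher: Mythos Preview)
Your argument is correct, and there is nothing in the present paper to compare it against: the theorem is quoted from \cite{WWXB} without proof here. Your two-step outline is exactly the $A_\alpha$-analogue of the paper's own proof of the signed-graph proposition in Section~6, and every step goes through for $\alpha\in[0,1)$ (for $\alpha=1$ the matrix $A_1=D$ has integer spectral radius, so no limit points exist and the statement is vacuous or should be read with $\alpha<1$).

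Two remarks that would shorten your write-up using tools the paper already states. First, Lemma~\ref{alpha-pn} records explicit upper and lower bounds for $\rho_{A_\alpha}(P_n)$, both of which tend to $2$ as $n\to\infty$; citing that lemma disposes of what you call the ``most delicate point'' and makes the Rayleigh-quotient computation unnecessary. Second, Lemma~\ref{alpha-delta}(ii) gives the strict inequality $\rho_{A_\alpha}(H)<\rho_{A_\alpha}(G)$ for \emph{any} proper subgraph $H$ of a connected $G$, which is stronger than the induced-subgraph comparison you assemble from entrywise domination plus Cauchy interlacing; either route suffices here, but the lemma lets you skip the argument entirely.
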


The connected graphs with $A_\alpha$-index at most $2$ are also characterized. In Theorem~\ref{Aa-graph2}, the following four numbers 
are of considerable importance:
$$ s_1=\dfrac{4}{n+1+\sqrt{(n+1)^{2}-16}};$$
the root $s_2=0.2192+$ of the polynomial $2\alpha^3-11\alpha^2+16\alpha-3$; the root $s_3=0.1206+$ of $\alpha^3-6\alpha^2+9\alpha-1$; and the root $s_4=0.0517+$ of $2\alpha^3-13\alpha^2+20\alpha-1$.

\begin{thm}{\rm \cite{WWXB}}\label{Aa-graph2}
Let $G$ be a connected graph with order $n$, and let $\alpha \in [0,1]$. The following two statements hold.  
\begin{itemize}
\item[$\mathrm{(i)}$]
$\rho_{A_\alpha}(G)<2$ if and only if $G$ is one of the following graphs:
\begin{itemize}
\item[$\mathrm{(a)}$]
$P_n$ ($n\geq 1$) for $\alpha \in [0,1)$;
\item[$\mathrm{(b)}$]
$T_{1,1,n-3}$ ($n \geq 4$) for $\alpha \in [0,s_1)$;
\item[$\mathrm{(c)}$]
$T_{1,2,2}$ for $\alpha \in [0,s_2)$, $T_{1,2,3}$ for $\alpha \in [0,s_3)$ and  $T_{1,2,4}$ for $\alpha \in [0,s_4)$.
\end{itemize}
\item[$\mathrm{(ii)}$]
$\rho_{A_\alpha}\!(G)=2$ if and only if  $G$ is one of the following graphs:
\begin{itemize}
\item[$\mathrm{(a)}$]
$C_n$, $n\geq 3$;
\item[$\mathrm{(b)}$]
$P_n$ ($n \geq 3$) for $\alpha=1$;
\item[$\mathrm{(c)}$]
$W_n\; (n \geq 6)$ for $\alpha=0$;
\item[$\mathrm{(d)}$]
$T_{1,1,n-3}$ for $\alpha=s_1$;
\item[$\mathrm{(e)}$]
$T_{1,2,2}$ for $\alpha=s_2$, $T_{1,2,3}$ for $\alpha=s_3$, $T_{1,2,4}$ for $\alpha=s_4$;
\item[$\mathrm{(f)}$]
$T_{1,3,3}$, $T_{1,2,5}$, $K_{1,4}$ and $T_{2,2,2}$, for $\alpha=0$.
\end{itemize}
\end{itemize}
\end{thm}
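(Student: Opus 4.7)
The plan is to use a monotonicity argument in $\alpha$ to reduce the characterization to Smith's classification (Theorem~\ref{22+5}), and then to pin down each threshold by inspecting the individual Smith graphs. Writing $A_\alpha(G) = A(G) + \alpha L(G)$, for $0 \leqslant \beta \leqslant \alpha \leqslant 1$ one has $A_\alpha - A_\beta = (\alpha - \beta) L(G) \succeq 0$. Since $A_\alpha(G)$ is nonnegative and irreducible for connected $G$, Perron--Frobenius gives $\rho_{A_\alpha}(G) = \lambda_{\max}(A_\alpha(G))$, so $\alpha \mapsto \rho_{A_\alpha}(G)$ is continuous and nondecreasing on $[0,1]$. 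Moreover, if $G$ is non-regular, no Perron eigenvector $y$ of $A_\beta(G)$ can be constant (the identity $A_\beta \mathbf{1} = (d(v))_v$ would force the degree sequence to be constant), hence $y^{T} L(G) y > 0$, and a Rayleigh-quotient comparison yields $\rho_{A_\alpha}(G) > \rho_{A_\beta}(G)$ whenever $\alpha > \beta$.

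This monotonicity reduces the problem to the Smith list: whenever $\rho_{A_\alpha}(G) \leqslant 2$, one has $\rho_A(G) = \rho_{A_0}(G) \leqslant 2$, so by Theorem~\ref{22+5} the graph $G$ belongs either to the ``$\rho_A = 2$'' list $\{C_n, W_n, K_{1,4}, T_{2,2,2}, T_{1,3,3}, T_{1,2,5}\}$ or to the ``$\rho_A < 2$'' list $\{P_n,\, T_{1,1,c},\, T_{1,2,c}\;(c\leqslant 4)\}$. Cycles are $2$-regular, so $\rho_{A_\alpha}(C_n) \equiv 2$; every other graph in the first list is non-regular, so strict monotonicity forces $\alpha = 0$ to be the unique value with $\rho_{A_\alpha}(G) = 2$, which accounts for parts (ii)(a), (c), and (f). For paths $P_n$ with $n \geqslant 3$, one has $\rho_{A_1}(P_n) = \Delta(P_n) = 2$, and strict monotonicity then yields parts (i)(a) and (ii)(b). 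For $T_{1,1,n-3}$ and $T_{1,2,c}$ with $c \in \{2,3,4\}$, $\Delta = 3$ gives $\rho_{A_1}(G) = 3$, so continuity and strict monotonicity produce a unique threshold $\alpha^{*}(G) \in (0,1)$ with $\rho_{A_{\alpha^{*}}}(G) = 2$.

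The remaining task is to identify these thresholds explicitly as $s_1, s_2, s_3, s_4$. For $T_{1,1,n-3}$, imposing $A_\alpha x = 2 x$ gives, along the path portion, the recurrence $(1-\alpha)(x_{i-1} + x_{i+1}) = (2 - 2\alpha) x_i$, which collapses to $x_{i-1} + x_{i+1} = 2 x_i$; hence $x_i$ must be affine in $i$. Coupling this with the pendant equation at the far end and the degree-$3$ equation at the branch vertex (using symmetry of the two short pendants) yields, after simplification, the quadratic $2\alpha^{2} - (n+1)\alpha + 2 = 0$, whose smaller root is exactly $s_1 = 4/\bigl(n+1+\sqrt{(n+1)^{2}-16}\bigr)$. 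For the sporadic trees $T_{1,2,c}$ with $c \in \{2,3,4\}$, which have at most seven vertices, one simply expands $\det(2I - A_\alpha(T_{1,2,c}))$ symbolically; after clearing common factors this produces the three cubics in the statement, whose smallest positive roots are $s_2$, $s_3$, $s_4$.

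The main obstacle is the $T_{1,1,n-3}$ computation: although the linear-in-$i$ reduction along the path is immediate, the boundary equations at the pendants and at the branch vertex must be made to telescope cleanly in $n$ so as to yield precisely the advertised quadratic, and this requires careful bookkeeping. Everything else in the argument is either a Perron--Frobenius / Rayleigh-quotient estimate or a finite computation on a graph with at most a handful of vertices.
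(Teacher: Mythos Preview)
The paper does not supply its own proof of Theorem~\ref{Aa-graph2}; it is quoted verbatim from \cite{WWXB}, so there is no in-paper argument to compare against. That said, your proposal is a complete and correct proof of the cited result, and in fact it is essentially the argument of \cite{WWXB}: the decomposition $A_\alpha = A + \alpha L$ together with the positive semidefiniteness of $L$ yields the (strict) monotonicity of $\rho_{A_\alpha}$ in $\alpha$ for non-regular connected graphs (this is Lemma~\ref{alpha-delta}(iii), due to Nikiforov), which immediately confines the candidates to Smith's list via Theorem~\ref{22+5}(i)--(ii); the remaining work is exactly the finite case analysis you describe.

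Two small remarks. First, your worry about the $T_{1,1,n-3}$ boundary computation is unfounded: writing the Perron eigenvector as $x_{v_i}=c+di$ along the long arm, the pendant equation at $v_{n-3}$ gives $c=d(\alpha-n+2)$, while the branch equation at $v_0$ (after substituting $x_a=x_b=\frac{1-\alpha}{2-\alpha}\,c$) gives $(1-\alpha)(2-\alpha)d=-\alpha c$; eliminating $c/d$ gives precisely $2\alpha^{2}-(n+1)\alpha+2=0$, whose smaller root is $s_1$ as stated. Second, for paths you can bypass the monotonicity endpoint argument altogether by noting that $P_n$ is a proper subgraph of $C_n$, so Lemma~\ref{alpha-delta}(ii) gives $\rho_{A_\alpha}(P_n)<\rho_{A_\alpha}(C_n)=2$ for every $\alpha\in[0,1)$, which is slightly cleaner.
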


The Hoffman program for the adjacency and the signless Laplacian matrix suggests that a natural second step is to identify all the possible $A_{\alpha}$-limit points which are bigger than $2$. In view of this goal, it is necessary to investigate the $A_\alpha$-spectral properties of graphs in the first instance. We do this in Subsection \ref{basic}, whereas  we find in Subsection \ref{sub2} many  $A_{\alpha}$-limit points larger than $2$ related to suitably built sequences of graphs which already turned out to be useful to detect the important $L$-limit point $\omega$ (see \cite{guo-lim} and Section 3).

\subsection{Some general results on the $A_\alpha$-matrix}\label{basic}

We start by fixing some notation. Let $u$ and $v$ be two vertices of a connected graph $G$. As usual, we denote  by $N_G(u)$ the neighbourhood of $u$ in $G$, i.e.\ the set of vertices in $V(G)$ adjacent to $u$, and by $d(u,v)$ the number of edges in a shortest path connecting $u$ and $v$. Throughout this section, we write the matrices $A_{\alpha}(P_n)$ and $A_{\alpha}(C_n)$  according to vertex labellings $u_1, \dots, u_n \in V(P_n)$ and $v_1, \dots, v_n \in V(C_n)$ such that $u_i$ (resp. $v_i$) is adjacent to $u_{i-1}$ and $u_{i+1}$ (resp.,  $v_{i-1}$ and $v_{i+1}$) for $2 \leqslant i \leqslant n-1$.

\begin{lem}{\rm \cite{niki}}\label{alpha-delta} For every connected graph $G$ with maximum vertex degree $\Delta$, and for every 
$\alpha \in [0,1]$, the $A_{\alpha}$-spectral radius $\rho\!_{_{A_{\alpha}}}(G)$ satisfies the following properties:

:
\begin{itemize}
\item[$\mathrm{(i)}$]
$\frac{1}{2}\left(\alpha(\Delta+1)+\sqrt{\alpha^2(\Delta+1)^2+4\Delta(1-2\alpha)}\right) \leqslant \rho\!_{_{A_{\alpha}}}\!(G) \leqslant \Delta$;
\item[$\mathrm{(ii)}$]
if $H$ is a proper subgraph of $G$, then $\rho\!_{_{A_{\alpha}}}\!(H)<\rho\!_{_{A_{\alpha}}}\!(G)$;
\item[$\mathrm{(iii)}$] if $0 \leqslant \alpha < \beta \leqslant 1$, then
$ \rho\!_{_{A_{\alpha}}}\! (G) < \rho\!_{_{A_{\beta}}}\!(G)$.
\end{itemize}
\end{lem}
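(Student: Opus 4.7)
The plan is to rest the three assertions on standard Perron--Frobenius theory together with Rayleigh-quotient estimates. The key observation is that, for every $\alpha \in [0,1]$, the matrix $A_\alpha(G)$ is real, symmetric and entrywise non-negative; moreover, for $\alpha \in [0,1)$ the connectedness of $G$ makes $A_\alpha(G)$ irreducible, so $\rho\!_{_{A_{\alpha}}}\!(G)$ coincides with its largest eigenvalue, is simple, and admits a strictly positive eigenvector.

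For (i), the upper bound is immediate from the classical fact that the spectral radius of a non-negative matrix is bounded by its maximum row sum: the row of $A_\alpha(G)$ at $v$ sums to $\alpha d(v) + (1-\alpha)d(v) = d(v) \leqslant \Delta$. For the lower bound, I would pick a vertex $v$ with $d(v)=\Delta$ and consider the test vector supported on $\{v\}\cup N(v)$ taking value $a$ at $v$ and $b$ at each of the $\Delta$ neighbours. Estimating $d(u) \geqslant 1$ for every $u \in N(v)$ and dropping the non-negative contribution of any edges inside $N(v)$, the Rayleigh quotient becomes a quadratic form in $(a,b)$ whose matrix has the same non-trivial eigenvalues as $A_\alpha(K_{1,\Delta})$. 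Solving the resulting $2\times 2$ characteristic equation $\lambda^2 - \alpha(\Delta+1)\lambda - \Delta(1-2\alpha)=0$ produces exactly the displayed lower bound.

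For (ii), let $B$ denote the principal submatrix of $A_\alpha(G)$ indexed by $V(H)$. Because $d_G(v)\geqslant d_H(v)$ for $v\in V(H)$ and every edge of $H$ lifts to a $(1-\alpha)$-entry of $B$, one has $B\geqslant A_\alpha(H)$ entrywise, whence $\rho(A_\alpha(H))\leqslant \rho(B)$. If $V(H)\subsetneq V(G)$, then $B$ is a proper principal submatrix of the irreducible non-negative matrix $A_\alpha(G)$, and Perron--Frobenius delivers $\rho(B) < \rho\!_{_{A_{\alpha}}}\!(G)$; if instead $V(H)=V(G)$ but $E(H)\subsetneq E(G)$, then $A_\alpha(G)$ differs from $A_\alpha(H)$ in at least one entry, and the same irreducibility promotes the inequality to a strict one.

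For (iii), the starting point is the identity
$$x^T\bigl(A_\beta(G)-A_\alpha(G)\bigr)x \;=\; (\beta-\alpha)\,x^T L(G) x \;=\; (\beta-\alpha)\sum_{uv\in E(G)}(x_u-x_v)^2,$$
valid for every $x\in \mathbb{R}^n$. Substituting the normalised Perron eigenvector $x^*$ of $A_\alpha(G)$ at once yields the non-strict version
$$\rho\!_{_{A_{\beta}}}\!(G) \;\geqslant\; (x^*)^T A_\beta(G) x^* \;=\; \rho\!_{_{A_{\alpha}}}\!(G) + (\beta-\alpha)\sum_{uv\in E(G)}(x^*_u-x^*_v)^2.$$
The main obstacle is upgrading this to a \emph{strict} inequality: the rightmost sum is strictly positive precisely when $x^*$ is non-constant on some edge, and this has to be extracted from the Perron equation $A_\alpha(G) x^* = \rho\!_{_{A_{\alpha}}}\!(G)\, x^*$ combined with the positivity of $x^*$ and the connectedness of $G$---indeed, if $x^*$ were constant across every edge, then it would be a constant vector, and the Perron equation would force $d(v)$ to be the same at every vertex. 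This is the step I expect to require the most care, and the only one where the precise graph-theoretic structure (rather than mere non-negativity) is essential.
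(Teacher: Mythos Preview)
The paper does not prove this lemma at all: it is quoted verbatim from Nikiforov's foundational paper \cite{niki} and used as a black box. So there is no ``paper's proof'' to compare against, and your Perron--Frobenius/Rayleigh outline is exactly the standard route to these facts.

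That said, your own analysis in part~(iii) uncovers a flaw in the \emph{statement} as transcribed here. You correctly deduce that the inequality
\[
\rho_{A_\beta}(G)\;\geqslant\;\rho_{A_\alpha}(G)\;+\;(\beta-\alpha)\sum_{uv\in E(G)}(x^*_u-x^*_v)^2
\]
is strict precisely when the Perron vector $x^*$ is non-constant, and that a constant Perron vector forces $G$ to be regular. But this is not an obstacle to be ``overcome'': for a connected $d$-regular graph one has $\rho_{A_\alpha}(G)=d$ for every $\alpha$, so part~(iii) as printed is simply false in that case. Nikiforov's original statement carries the non-strict inequality (with strictness characterised by non-regularity), and the paper's strict version is an inadvertent overstatement. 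Your argument proves the correct assertion; you should say so explicitly rather than presenting the regularity issue as a remaining difficulty.

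A parallel remark applies to part~(ii): your irreducibility argument needs $\alpha\in[0,1)$, and indeed for $\alpha=1$ the claim fails (delete an edge away from a vertex of maximum degree). Since every application in Section~9 of the paper uses $\alpha<1$, this boundary case is harmless in context, but it is worth flagging.
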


\begin{lem}{\rm \cite{niki-pas}}{\label{alpha-pn}}
The $A_{\alpha}$-spectral radius of the path $P_{n}$ satisfies the following inequalities.
\begin{itemize}
\item[$\mathrm{(i)}$]
$ \displaystyle \rho\!_{_{A_{\alpha}}}\!(P_{n})\leqslant
\begin{cases}
\small   2\alpha+2(1-\alpha) \cos (\frac{\pi}{n+1} ) &\text{for $\alpha\in[0,1/2)$,}\\[.8em]
 2\alpha+2(1-\alpha) \cos (\frac{\pi}{n}) &\text{for $\alpha \in [1/2,1]$.}
\end{cases}$
\medskip

\noindent
Equality holds if and only if $\alpha=0$, $\alpha=1/2$, $\alpha=1$.\\
\item[$\mathrm{(ii)}$]
$ \displaystyle\rho\!_{_{A_{\alpha}}}\!(P_{n})\geqslant
\begin{cases}
2\alpha+2(1-\alpha) \cos(\frac{\pi}{n}) &\text{for $\alpha\in[0,1/2)$}\\[.8em]
2\alpha+2\alpha \cos (\frac{\pi}{n})-2(2\alpha-1) \cos (\frac{\pi}{n+1}) &\text{for $\alpha\in[1/2,1]$.}\\
\end{cases}$
\medskip

\noindent
Equality holds if and only if $\alpha=1/2$.
\end{itemize}
\end{lem}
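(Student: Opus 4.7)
The plan is to derive both the upper and lower bounds from a careful analysis of the Perron pair of $A_\alpha(P_n)$. Because $A_\alpha(P_n)$ is tridiagonal and invariant under the order-reversing automorphism of $P_n$, I look for a Perron eigenvector in the symmetric form
\[
x_k=\cos\!\left(\tfrac{n+1-2k}{2}\,\theta\right),\qquad 1\leq k\leq n,
\]
with eigenvalue $\lambda=2\alpha+2(1-\alpha)\cos\theta$. The interior rows of the eigenvalue equation reduce to the three-term recurrence $x_{k-1}+x_{k+1}=2\cos\theta\,x_k$, which the ansatz satisfies automatically, while the two boundary rows collapse (after a short product-to-sum manipulation) to the single transcendental condition
\[
h(\theta):=(1-\alpha)\cos\!\tfrac{(n+1)\theta}{2}+\alpha\cos\!\tfrac{(n-1)\theta}{2}=0.
\]
Since $h(0)=1>0$, the Perron parameter $\theta^{*}$ is the smallest positive zero of $h$, and $\rho_{A_\alpha}(P_n)=2\alpha+2(1-\alpha)\cos\theta^{*}$.

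The upper bounds will come from evaluating $h$ at the two candidate angles: a short trigonometric computation gives
\[
h(\pi/(n+1))=\alpha\sin(\pi/(n+1)),\qquad h(\pi/n)=(2\alpha-1)\sin(\pi/(2n)).
\]
When $\alpha\in[0,1/2)$, $h(\pi/(n+1))\geq 0$ forces $\theta^{*}\geq\pi/(n+1)$, giving the first upper bound with equality iff $\alpha=0$. When $\alpha\in[1/2,1]$, $h(\pi/n)\geq 0$ forces $\theta^{*}\geq \pi/n$, giving the second upper bound with equality iff $\alpha=1/2$; the case $\alpha=1$ is also an equality, trivially, since the coefficient $2(1-\alpha)$ then vanishes. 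In the converse direction, the \emph{negative} evaluation $h(\pi/n)<0$ available for $\alpha\in[0,1/2)$, together with $h(0)=1$, places $\theta^{*}<\pi/n$ and yields $\rho_{A_\alpha}(P_n)> 2\alpha+2(1-\alpha)\cos(\pi/n)$, which is the first lower bound, strict except at $\alpha=1/2$.

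For the lower bound in the harder range $\alpha\in[1/2,1]$, I switch strategy and exploit the decomposition
\[
A_\alpha=(1-2\alpha)A+\alpha Q,
\]
obtained by eliminating $D=Q-A$. Applying Rayleigh's principle with the Perron eigenvector $x_Q$ of $Q(P_n)$ (for which $\rho_Q(P_n)=2+2\cos(\pi/n)$ is classical) gives
\[
\rho_{A_\alpha}(P_n)\geq \frac{x_Q^{T}A_\alpha x_Q}{\|x_Q\|^{2}}=\alpha\rho_Q(P_n)+(1-2\alpha)\,\frac{x_Q^{T}Ax_Q}{\|x_Q\|^{2}}.
\]
Since $1-2\alpha\leq 0$ and $x_Q^{T}Ax_Q/\|x_Q\|^{2}\leq \rho_A(P_n)=2\cos(\pi/(n+1))$, the right-hand side is at least $\alpha\rho_Q(P_n)+(1-2\alpha)\rho_A(P_n)$, which simplifies to the stated lower bound $2\alpha+2\alpha\cos(\pi/n)-2(2\alpha-1)\cos(\pi/(n+1))$. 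Equality in the two steps together requires $1-2\alpha=0$, pinning down $\alpha=1/2$.

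The main obstacle will be the sign analysis of $h$ on $(0,\pi/(n-1)]$: I need to know that $\theta^{*}$ is genuinely the \emph{first} positive zero and lies in the expected subinterval as $\alpha$ crosses the threshold $1/2$. A short monotonicity argument on $h$ near $\theta^{*}$, or equivalently a Perron--Frobenius appeal that the unique positive eigenvector of $A_\alpha(P_n)$ must be the symmetric one pinned down by the ansatz above, should close this gap; the preliminary bound $\rho_{A_\alpha}(P_n)\leq\Delta(P_n)=2$ from Lemma~\ref{alpha-delta}(i) guarantees that $\theta^{*}$ is real, so that the $\cos$-ansatz (rather than a $\cosh$-ansatz) is the correct form.
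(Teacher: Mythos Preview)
The paper does not supply its own proof of this lemma: it is quoted verbatim from \cite{niki-pas} and used as a black box. So there is nothing in the present paper to compare your argument against.

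On its own merits your outline is sound and close to the argument in \cite{niki-pas}. Your symmetric ansatz $x_k=\cos\bigl(\tfrac{n+1-2k}{2}\theta\bigr)$ does reduce the eigenvalue problem to $h(\theta)=0$, and your evaluations $h(\pi/(n+1))=\alpha\sin(\pi/(n+1))$ and $h(\pi/n)=(2\alpha-1)\sin(\pi/(2n))$ are correct, giving the upper bounds and the first lower bound by the intermediate-value argument you describe. Your Rayleigh-quotient derivation of the second lower bound via $A_\alpha=(1-2\alpha)A+\alpha Q$ is also correct, and the equality analysis is right for $n\geq 3$.

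The one point that needs more than a wave of the hand is the one you flag: why the Perron angle $\theta^*$ is the \emph{first} positive zero of $h$ in $(0,\pi/(n-1))$. The Perron--Frobenius argument you propose is the cleanest way to close this. Concretely: any zero of $h$ in $(0,\pi/(n-1))$ gives a strictly positive vector $x$, hence the Perron eigenvector; since $A_\alpha(P_n)$ is irreducible and nonnegative (for $\alpha<1$), the Perron eigenvector is unique up to scaling, so $h$ has exactly one zero there, and the sign changes $h(0)=1>0$, $h(\pi/(n-1))=-(1-\alpha)\sin(\pi/(n-1))<0$ pin it down. With that in place, ``$h$ nonnegative at $\pi/(n+1)$ (resp.\ $\pi/n$)'' together with positivity of $h$ on $(0,\pi/(n+1))$ (both cosines positive there) genuinely forces $\theta^*\geq \pi/(n+1)$; the extension to $\theta^*\geq \pi/n$ for $\alpha\geq 1/2$ then follows because the unique zero lies to the right of the last nonnegative sample point.
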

Let $\phi(G) = \det(\lambda I - A_\alpha(G))$ denote the $A_{\alpha}$-polynomial of a graph $G$. For every vertex $v \in V(G)$, we indicate by
$A_{\alpha}(G)_v$ the principal submatrix of
$A_{\alpha}(G)$ obtained by deleting the row and the column corresponding to the vertex $v$, and by
$\phi(G)_v$
the characteristic polynomial of $A_{\alpha}(G)_v$.
\begin{lem}\label{alpha-g1g2}{\rm \cite{li-chen-meng}}
The $A_{\alpha}$-characteristic polynomial of $G=G_1u \!\!:\!\!v G_2$,  the graph obtained by joining the vertex $u$ of the graph $G_1$ to the vertex $v$ of the graph $G_2$ by an edge, is given by the following formula.
$$
\phi(G)=\phi(G_1)\phi(G_2)-\alpha\phi(G_1)_u\phi(G_2)-\alpha\phi(G_1)\phi(G_2)_v+(2\alpha-1)\phi(G_1)_u\phi(G_2)_v.
$$
\end{lem}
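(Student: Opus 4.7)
The idea is to exploit the block structure of $A_{\alpha}(G)$ induced by the bipartition $V(G)=V(G_1)\cup V(G_2)$, and to peel off the effect of the bridging edge $uv$ via two successive applications of the matrix-determinant lemma
$$\det(M+c\,xx^{T})=\det(M)+c\,x^{T}\mathrm{adj}(M)\,x.$$
All identities below are between polynomials in $\lambda$, so we may freely assume the relevant submatrices are invertible at the particular $\lambda$ under consideration and conclude by a polynomial-identity/continuity argument.

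\textbf{Step 1: block form.} With $u$ listed last in $V(G_1)$ and $v$ listed first in $V(G_2)$, and writing $e_u$, $e_v$ for the corresponding standard basis vectors, the edge $uv$ adds $1$ to both $d(u)$ and $d(v)$ while creating one off-diagonal pair of $1$s. Hence
$$A_{\alpha}(G)=\begin{pmatrix} A_{\alpha}(G_1)+\alpha\,e_u e_u^{T} & (1-\alpha)\,e_u e_v^{T}\\ (1-\alpha)\,e_v e_u^{T} & A_{\alpha}(G_2)+\alpha\,e_v e_v^{T}\end{pmatrix}.$$
Setting $B_1=\lambda I-A_{\alpha}(G_1)-\alpha\,e_u e_u^{T}$ and $B_2=\lambda I-A_{\alpha}(G_2)-\alpha\,e_v e_v^{T}$, the matrix $\lambda I-A_{\alpha}(G)$ has $B_1,B_2$ on the diagonal and $-(1-\alpha)e_u e_v^{T}$, $-(1-\alpha)e_v e_u^{T}$ off.

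\textbf{Step 2: Schur complement.} Taking the Schur complement with respect to $B_1$,
$$\phi(G)=\det(B_1)\det\!\bigl(B_2-(1-\alpha)^{2}(e_u^{T}B_1^{-1}e_u)\,e_v e_v^{T}\bigr).$$
Applying the matrix-determinant lemma to the second factor gives
$$\phi(G)=\det(B_1)\det(B_2)-(1-\alpha)^{2}\bigl(\det(B_1)\,e_u^{T}B_1^{-1}e_u\bigr)\,e_v^{T}\mathrm{adj}(B_2)\,e_v,$$
and Cramer/minor identities identify $\det(B_1)\,e_u^{T}B_1^{-1}e_u=\det((B_1)_u)$ and $e_v^{T}\mathrm{adj}(B_2)\,e_v=\det((B_2)_v)$, where the subscripts denote deletion of the row and column indexed by $u$, respectively $v$. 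The crucial observation is that the rank-one perturbations $\alpha e_u e_u^{T}$ and $\alpha e_v e_v^{T}$ live entirely in the removed rows/columns, so $(B_1)_u=\lambda I-A_{\alpha}(G_1)_u$ and $(B_2)_v=\lambda I-A_{\alpha}(G_2)_v$; therefore $\det((B_1)_u)=\phi(G_1)_u$ and $\det((B_2)_v)=\phi(G_2)_v$.

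\textbf{Step 3: expand the diagonal determinants.} A second application of the matrix-determinant lemma yields
$$\det(B_1)=\phi(G_1)-\alpha\,\phi(G_1)_u,\qquad \det(B_2)=\phi(G_2)-\alpha\,\phi(G_2)_v.$$
Substituting into the expression from Step 2 and expanding,
$$\phi(G)=\phi(G_1)\phi(G_2)-\alpha\,\phi(G_1)_u\phi(G_2)-\alpha\,\phi(G_1)\phi(G_2)_v+\bigl(\alpha^{2}-(1-\alpha)^{2}\bigr)\phi(G_1)_u\phi(G_2)_v.$$
Since $\alpha^{2}-(1-\alpha)^{2}=2\alpha-1$, this is exactly the stated formula.

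\textbf{Anticipated difficulty.} The only genuinely delicate point is keeping track of which principal submatrices appear after the rank-one corrections are applied, i.e.\ confirming that $(B_i)_{\cdot}$ reproduces $\phi(G_i)_{\cdot}$ and not the characteristic polynomial of $A_{\alpha}(G_i-\cdot)$, whose diagonal would reflect a smaller degree sequence. This is resolved by the observation in Step 2 that deleting the row and column supporting the rank-one term kills that term outright. Everything else is book-keeping.
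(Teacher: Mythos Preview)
Your argument is correct. The block description in Step~1 is right, the Schur-complement reduction in Step~2 is valid (the polynomial-identity remark handles the invertibility caveat), the identification $(B_1)_u=\lambda I-A_{\alpha}(G_1)_u$ is exactly the point you highlight, and the algebra in Step~3 closes cleanly via $\alpha^{2}-(1-\alpha)^{2}=2\alpha-1$.

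As for comparison: the paper does not supply its own proof of this lemma at all; it simply quotes the formula from \cite{li-chen-meng}. The argument there proceeds by a direct Laplace expansion of $\det(\lambda I-A_{\alpha}(G))$ along the row (or column) indexed by $u$, separating the contribution of the new entry $(u,v)$ and of the diagonal increment at $u$, and then repeating for $v$. Your route via the Schur complement and two applications of the matrix-determinant lemma is a genuinely different packaging: it trades the somewhat ad hoc bookkeeping of cofactor signs for a structured rank-one calculus, and it makes transparent \emph{why} the coefficient on $\phi(G_1)_u\phi(G_2)_v$ is $\alpha^{2}-(1-\alpha)^{2}$ rather than appearing as the output of a cancellation. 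Either approach is short; yours generalises more readily to situations where several bridging edges are added simultaneously.
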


For each positive integer $n$, we consider the matrix $B_{n}$ obtained from $A_{\alpha}(P_{n+1})$ by deleting the row and column corresponding to the end vertex $u_1$ of the path $P_{n+1}$, and the matrix $H_{n}$ obtained from
$A_{\alpha}(P_{n+2})$ by deleting the rows and the columns corresponding to both end-vertices of $P_{n+2}$. Clearly,
both $B_n$ and $H_n$ are $n \times n$ matrices. We explicitly observe that the three matrices $P_n$, $B_n$ and $H_n$ are equal if and only if $\alpha=0$.

For every $\alpha \in [0,1)$, we also set
\[  \phi(P_0)=\frac{1-2\alpha}{(1-\alpha)^2}, \quad \phi(B_0)=1, \quad 	\text{and}   \quad \phi(H_0)=1. \]

\begin{lem}\label{e0}  The equation
\begin{equation}\label{e1} \left(\lambda+\frac{1}{\alpha}-2 \right)\phi(B_n)=\phi(P_{n+1})+
\frac{(1-\alpha)^2}{\alpha}\,  \phi(P_n)
\end{equation}
holds for every $n \geqslant 0$ and $\alpha \in (0,1)$.
\end{lem}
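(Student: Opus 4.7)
My plan is to exploit the tridiagonal structure of $\lambda I - A_\alpha(P_{n+1})$ and $\lambda I - B_n$, deriving via cofactor expansion two short three-term recurrences that, once combined algebraically, collapse to \eqref{e1}; the base cases $n=0$ and $n=1$ are then checked directly. Specifically, I would first observe that $\lambda I - A_\alpha(P_{n+1})$ is tridiagonal with main diagonal $(\lambda-\alpha, \lambda-2\alpha, \ldots, \lambda-2\alpha, \lambda-\alpha)$ and sub-/super-diagonal entries $-(1-\alpha)$, while $\lambda I - B_n$ is identical except that its top-left entry is $\lambda - 2\alpha$. Expanding each determinant along the first row (and then expanding the resulting $(1,2)$-minor along its first column, whose only nonzero entry is the topmost one) yields, for $n \geqslant 2$,
\[
\phi(P_{n+1}) = (\lambda-\alpha)\phi(B_n) - (1-\alpha)^2 \phi(B_{n-1}), \qquad \phi(B_n) = (\lambda-2\alpha)\phi(B_{n-1}) - (1-\alpha)^2 \phi(B_{n-2}),
\]
together with $\phi(P_n) = (\lambda-\alpha)\phi(B_{n-1}) - (1-\alpha)^2 \phi(B_{n-2})$, which is just the first recurrence applied to $n-1$.

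The second step is the combination: multiply the first recurrence by $\alpha$, multiply the identity for $\phi(P_n)$ by $(1-\alpha)^2$, and add. The coefficient of $\phi(B_{n-1})$ in the sum is $(1-\alpha)^2(\lambda-2\alpha)$, which by the second recurrence equals $(1-\alpha)^2 \phi(B_n) + (1-\alpha)^4 \phi(B_{n-2})$; inserting this expression cancels the $\phi(B_{n-2})$ term exactly and collapses the coefficient of $\phi(B_n)$ to $\alpha(\lambda-\alpha) + (1-\alpha)^2 = \alpha\lambda + 1 - 2\alpha$. Dividing through by $\alpha$ produces \eqref{e1} for every $n \geqslant 2$.

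Finally, the two low-index cases are verified directly, because the recurrences above require $n \geqslant 2$. For $n=0$, using $\phi(B_0)=1$, $\phi(P_1)=\lambda$, and the prescribed value $\phi(P_0)=(1-2\alpha)/(1-\alpha)^2$, the right-hand side of \eqref{e1} simplifies to $\lambda + (1-2\alpha)/\alpha = \lambda + 1/\alpha - 2$, which matches the left-hand side. For $n=1$, with $\phi(B_1)=\lambda-\alpha$ and $\phi(P_2)=(\lambda-\alpha)^2-(1-\alpha)^2$, a short computation shows that both sides of \eqref{e1} equal $(\lambda + 1/\alpha - 2)(\lambda-\alpha)$. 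The argument is essentially mechanical once the cofactor-expansion recurrences are in place; the only mildly delicate point is verifying that the somewhat unusual convention $\phi(P_0) = (1-2\alpha)/(1-\alpha)^2$ stated just before the lemma is precisely the one that makes the $n=0$ base case consistent.
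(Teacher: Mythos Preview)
Your proof is correct. The overall shape matches the paper's---check the base cases $n\in\{0,1\}$ directly, then handle $n\geqslant 2$ by combining determinant recurrences---but the intermediate identities are different. The paper splits the first row of $\lambda I - B_n$ as $(\lambda-\alpha,\alpha-1,0,\dots,0)+(-\alpha,0,\dots,0)$ and uses determinant linearity to obtain $\phi(B_n)=\phi(P_n)-\alpha\phi(B_{n-1})$, then invokes the edge-join formula of Lemma~\ref{alpha-g1g2} (with $G_1=P_n$, $G_2=P_1$) to get $\phi(P_{n+1})=(\lambda-\alpha)\phi(P_n)+(2\alpha-\alpha\lambda-1)\phi(B_{n-1})$; these two combine immediately to \eqref{e1}. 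You instead stay entirely within tridiagonal cofactor expansion, producing the three recurrences and then eliminating $\phi(B_{n-1})$ and $\phi(B_{n-2})$ by a short linear combination. Your route is more self-contained since it avoids the external Lemma~\ref{alpha-g1g2}; the paper's route has the advantage that its identity $\phi(B_n)=\phi(P_n)-\alpha\phi(B_{n-1})$ is reused later (for example in the proof of Proposition~\ref{alpha-gupnpn}), so it earns its place as a standalone equation.
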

\begin{proof} For $n\in \{0,1\}$, the statement follows from a direct calculation. For $n>1$, the matrix $B_n$ has the form
\begin{equation}\label{bn}
\left( \begin{array}{cccccc}
\lambda-2\alpha  &  \alpha-1    &   0           &    \cdots   &   0           \\
       \alpha-1   &   \lambda-2\alpha  &   \alpha-1    &    \cdots   &   0           \\
           0      &   \alpha-1   &   \lambda-2\alpha   &    \cdots   &   0           \\
       \vdots     &   \vdots     &   \vdots      &    \ddots   &   \vdots      \\
           0      &   0          &    0          &    \cdots   &   \lambda-\alpha    \\
\end{array}\right).
\end{equation}
The first row of \eqref{bn} is equal to the sum $(\lambda-\alpha, \alpha -1, 0, \dots 0) + (-\alpha, 0, \dots 0)$. By linearity of the determinant function in the first row, we get
\begin{equation}\label{e2} \phi(B_n) = \phi(P_n)-\alpha\phi(B_{n-1}).
\end{equation}
We now use Lemma \ref{alpha-g1g2}, in the case $G_1=P_n$, $u$ is an end vertex of $G_1$, and $G_2=P_1$. Clearly,
$G_1u \!:\! vG_2=P_{n+1}$  and
\begin{equation}\label{PPB}
\phi(P_{n+1})=(\lambda-\alpha)\phi(P_n)+(2\alpha-\alpha \lambda-1)\phi(B_{n-1}).
\end{equation}
Combining \eqref{e2} and \eqref{PPB}, we arrive at \eqref{e1}.
\end{proof}
\begin{lem}\label{e1234} For every $n \geqslant 1$ and $\alpha \in [0,1)$, the following equalities hold.
\begin{itemize}
\item[$\mathrm{(i)}$]
$\phi(P_{n+1})=(\lambda-2\alpha)\phi(P_n)-(1-\alpha)^2\phi(P_{n-1})$;
\item[$\mathrm{(ii)}$]
$\phi(P_{n+1})=\lambda\phi(H_{n})+(2\alpha-1)\phi(H_{n-1})$;
\item[$\mathrm{(iii)}$]
$\phi(C_{n+2})=(x-2\alpha)\phi(H_{n+1})-2(\alpha-1)^2\phi(H_{n})+2(-1)^{n+1}(\alpha-1)^{n+2}$.
\end{itemize}
\end{lem}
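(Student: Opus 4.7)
The plan is to reduce all three identities to cofactor expansions of the tridiagonal matrices $\lambda I - A_\alpha(P_{n+1})$ and $\lambda I - A_\alpha(C_{n+2})$, relying on Lemmas \ref{alpha-g1g2} and \ref{e0}, together with the standard uniform-diagonal three-term recurrence
\[
\phi(H_{k+1}) = (\lambda - 2\alpha)\phi(H_k) - (1-\alpha)^2\phi(H_{k-1}),
\]
which is immediate from Laplace expansion of $\det(\lambda I - H_{k+1})$ along its last row.

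For (ii), I would observe that $\lambda I - A_\alpha(P_{n+1})$ and $\lambda I - H_{n+1}$ differ only in the $(1,1)$- and $(n+1,n+1)$-entries, by exactly $\alpha$ in each, so that
\[
\lambda I - A_\alpha(P_{n+1}) = (\lambda I - H_{n+1}) + \alpha E_{11} + \alpha E_{n+1,n+1}.
\]
Multilinearity of the determinant applied successively to the first and last columns splits $\phi(P_{n+1})$ into four summands: replacing neither of those columns contributes $\phi(H_{n+1})$; replacing exactly one of them by the corresponding standard basis vector and Laplace-expanding along that column contributes $\alpha\phi(H_n)$ (this happens twice); replacing both contributes $\alpha^2\phi(H_{n-1})$. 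This yields $\phi(P_{n+1}) = \phi(H_{n+1}) + 2\alpha\phi(H_n) + \alpha^2\phi(H_{n-1})$, and substituting the $H$-recurrence for $\phi(H_{n+1})$, together with the identity $\alpha^2 - (1-\alpha)^2 = 2\alpha - 1$, produces (ii).

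For (i), the cleanest route is to apply (ii) at indices $n+1$, $n$ and $n-1$, and then verify that
\[
(\lambda - 2\alpha)\phi(P_n) - (1-\alpha)^2\phi(P_{n-1}) = \lambda\phi(H_n) + (2\alpha-1)\phi(H_{n-1})
\]
by grouping the coefficients of $\lambda$ and of $2\alpha-1$ and invoking the $H$-recurrence twice; the right-hand side then equals $\phi(P_{n+1})$ by (ii). The small-$n$ base cases rely on the conventions $\phi(P_0) = (1-2\alpha)/(1-\alpha)^2$ and $\phi(H_{-1})=0$. Alternatively, Lemma \ref{alpha-g1g2} applied to $P_{n+1}$ viewed as $P_n$ and $P_1$ joined by the edge $u_n u_{n+1}$ gives $\phi(P_{n+1}) = (\lambda - \alpha)\phi(P_n) - (\alpha\lambda - 2\alpha + 1)\phi(B_{n-1})$, after which Lemma \ref{e0} (applied at index $n-1$) eliminates $\phi(B_{n-1})$ in favour of $\phi(P_n)$ and $\phi(P_{n-1})$, and direct simplification yields (i).

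For (iii), let $N = \lambda I - A_\alpha(C_{n+2})$. I would expand $\det N$ along its first row, which has exactly three nonzero entries: $\lambda - 2\alpha$ at $(1,1)$, $\alpha-1$ at $(1,2)$, and $\alpha-1$ at $(1,n+2)$ due to the cyclic adjacency $v_1 v_{n+2}$. The $(1,1)$-cofactor is immediately $\phi(H_{n+1})$. For each of the other two cofactors, a further expansion along the column (respectively row) inheriting the surviving wrap-around entry splits the $(n+1)\times(n+1)$ minor into two pieces: one is the principal submatrix on two-fewer consecutive cycle vertices, contributing $\phi(H_n)$; the other is an $n\times n$ banded matrix that turns out to be lower- (respectively upper-) triangular with all diagonal entries equal to $\alpha-1$, and therefore contributes $(\alpha-1)^n$. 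Collecting signs gives that the minor after deleting row $1$ and column $2$ equals $(\alpha-1)\phi(H_n) + (-1)^n(\alpha-1)^{n+1}$, and symmetrically the minor after deleting row $1$ and column $n+2$ equals $(-1)^n(\alpha-1)\phi(H_n) + (\alpha-1)^{n+1}$; substituting into the first-row expansion produces (iii). I expect the sign bookkeeping to be the main obstacle: several $(-1)^n$ factors arise (from the three first-row cofactor signs, from the triangular-minor determinants, and from the $(-1)^{1+(n+2)}$ sign associated to the $(1,n+2)$-cofactor), and they must combine so that the two symmetric wrap-around contributions reinforce each other to produce the factor $2$ in the remainder term $2(-1)^{n+1}(\alpha-1)^{n+2}$, rather than cancel.
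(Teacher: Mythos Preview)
Your proposal is correct and follows essentially the same line as the paper: part~(ii) is identical (multilinearity on the two extremal rows/columns to get $\phi(P_{n+1}) = \phi(H_{n+1}) + 2\alpha\phi(H_n) + \alpha^2\phi(H_{n-1})$, then the $H$-recurrence), and part~(iii) is the paper's first-row cofactor expansion, with your sign bookkeeping for the two wrap-around minors spelled out more carefully than in the paper itself. For~(i), the paper uses precisely your ``alternative'' route via Lemmas~\ref{alpha-g1g2} and~\ref{e0} (treating $\alpha=0$ separately by a bare cofactor expansion on an end vertex); your primary derivation of~(i) from~(ii) together with two applications of the $H$-recurrence is a harmless variation that has the mild advantage of avoiding that case split.
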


\begin{proof}

 For $n=1$, (i) and (ii)  follow from a direct calculation. Let now $n>1$. A cofactor expansion along the row corresponding to an end vertex of $P_{n+1}$ suffices to show that (i) holds for $\alpha=0$. For $\alpha>0$,  (i) follows from \eqref{e1} and \eqref{PPB}.

We now prove (ii) for $n>1$. By expanding the determinant $\phi(H_{n+1})$ by the first row we get
\begin{equation}\label{hn}
\phi(H_{n+1})=(\lambda-2\alpha)\phi(H_{n})-(1-\alpha)^2\phi(H_{n-1}).
\end{equation}
Once we write the first and the last row of $A_{\alpha}(P_{n+1})$ as
$(\lambda-2\alpha, \alpha-1,0, \dots 0) + (\alpha,0, \dots, 0)$ and $(0, \dots,0 , \alpha-1, \lambda-2\alpha) + (0, \dots,0,\alpha)$, linearity of the determinant function in the first and in the last row gives
\[ \phi(P_{n+1}) = \phi(H_{n+1})+2\alpha\phi(H_{n})+\alpha^2\phi(H_{n-1})
\]
which, together with \eqref{hn}, leads to (ii).\smallskip

For (iii),  we consider the cofactor expansion of $\phi(C_{n+2})$ along the first row, getting
{\footnotesize
\begin{equation*}
\begin{split}
\phi(C_{n+2})&=\left|
\begin{array}{cccccc}
       \lambda-2\alpha  &  \alpha-1    &   0           &    \cdots   &   \alpha-1           \\
       \alpha-1   &   \lambda-2\alpha  &   \alpha-1    &    \cdots   &   0           \\
           0      &   \alpha-1   &   \lambda-2\alpha   &    \cdots   &   0           \\
       \vdots     &   \vdots     &   \vdots      &    \ddots   &   \vdots      \\
    \alpha-1      &   0          &    0          &    \cdots   &   \lambda-2\alpha    \\
\end{array}\right|\\
&=(\lambda-2\alpha)\phi(H_{n+1})-(\alpha-1)\left|
\begin{array}{cccccc}
       \alpha-1   &     &   \alpha-1    &    \cdots   &   0           \\
           0      &     &   \lambda-2\alpha   &    \cdots   &   0           \\
       \vdots     &     &   \vdots      &    \ddots   &   \vdots      \\
    \alpha-1      &     &    0          &    \cdots   &   \lambda-2\alpha    \\
\end{array}\right| -(\alpha-1)^2\phi(H_{n})+(-1)^{n+3}(\alpha-1)^{n+2}\\
&=(\lambda-2\alpha)\phi(H_{n+1})-(\alpha-1)^2\phi(H_{n})+(-1)^{n+3}(\alpha-1)^{n+2}-(\alpha-1)^2\phi(H_{n})+(-1)^{n+3}(\alpha-1)^{n+2}\\
&=(\lambda-2\alpha)\phi(H_{n+1})-2(\alpha-1)^2\phi(H_{n})+2(-1)^{n+3}(\alpha-1)^{n+2}.
\end{split}
\end{equation*}
}
This finishes the proof, since $n+1$ and $n+3$ have the same parity.
\end{proof}

Throughout the rest of the paper, we shall make use of the following notations:
\begin{equation}\label{kazz1} \Delta_{\lambda,\alpha} =\sqrt{(\lambda-4\alpha+2)(\lambda-2)} \qquad \text{and}  \qquad h(\lambda)_{\alpha}  = \frac{\lambda -\Delta_{\lambda,\alpha}}{2\alpha(\lambda-2)+2}. \end{equation}
\begin{prop}\label{gongshi} Let $n$ be any non-negative integer. After setting
\begin{equation*} s  =\frac{\lambda-2\alpha+\Delta_{\lambda,\alpha}}{2}, \qquad\text{and}  \qquad t= \frac{\lambda-2\alpha-\Delta_{\lambda,\alpha}}{2},
\end{equation*}
Equalities (i), (ii) and (iv) below hold for $\alpha \in [0,1)$. Equality (iii) holds for $\alpha \in (0,1)$.
\begin{itemize}
\item[$\mathrm{(i)}$]
$\phi(H_{n})=\Delta_{\lambda,\alpha}^{-1}(s^{n+1}-t^{n+1});$
\item[$\mathrm{(ii)}$]
$\phi(P_{n+1})=\Delta_{\lambda,\alpha}^{-1}((s+\alpha)^2s^{n}-(t+\alpha)^2t^{n});$
\item[$\mathrm{(iii)}$]
$\phi(B_{n+1})=$\small$\displaystyle \frac{1}{\Delta_{\lambda,\alpha}} \cdot \frac{\alpha}{\left( \alpha(\lambda-2)+1\right)} \left((s+\alpha)^2
\left(s+\frac{(1-\alpha)^2}{\alpha}\right)s^{n}-(t+\alpha)^2\left(t+\frac{(1-\alpha)^2}{\alpha}\right)t^{n}\right).$
\item[$\mathrm{(iv)}$] $\displaystyle \lim\limits_{n\rightarrow\infty} \frac{\phi(B_{n-1})}{\phi (P_n)} =
\lim\limits_{n\rightarrow\infty} \frac{\phi(H_{n-2})}{\phi (B_{n-1})}
=h(\lambda)_{\alpha} $.
\end{itemize}
\end{prop}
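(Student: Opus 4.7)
The plan is to treat (i)--(iii) as closed-form manifestations of Binet's formula for a single $2 \times 2$ recurrence, and then harvest the asymptotics in (iv) via a dominant-root argument. First, I would observe that $s$ and $t$ are precisely the two roots of $z^2 - (\lambda - 2\alpha)z + (1-\alpha)^2 = 0$: the sum is $s+t = \lambda - 2\alpha$, and the identity $\Delta_{\lambda,\alpha}^2 = (\lambda-4\alpha+2)(\lambda-2) = (\lambda - 2\alpha)^2 - 4(1-\alpha)^2$ gives $st = (1-\alpha)^2$. This is exactly the characteristic polynomial of the common second-order linear recurrence satisfied by $\phi(P_n)$ (Lemma~\ref{e1234}(i)) and by $\phi(H_n)$ (equation~\eqref{hn} in the proof of Lemma~\ref{e1234}(ii)). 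Formula (i) is then the Binet form pinned down by the initial data $\phi(H_0) = 1$ and $\phi(H_1) = \lambda - 2\alpha = s + t$: setting $\phi(H_n) = As^{n+1} + Bt^{n+1}$ and solving forces $A = -B = 1/\Delta_{\lambda,\alpha}$.

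For (ii), I would substitute (i) into Lemma~\ref{e1234}(ii) and collect powers of $s$ and $t$, obtaining $\phi(P_{n+1}) = \Delta_{\lambda,\alpha}^{-1}\bigl(s^n(\lambda s + 2\alpha - 1) - t^n(\lambda t + 2\alpha - 1)\bigr)$. The perfect square in the statement comes from Vieta's formulas: since $z^2 = (\lambda-2\alpha)z - (1-\alpha)^2$ for $z \in \{s,t\}$, a short expansion gives $(z+\alpha)^2 = \lambda z + 2\alpha - 1$. Formula (iii) then follows from equation~\eqref{e1} (with index shifted to $n+1$) multiplied through by $\alpha$, namely $(\alpha(\lambda-2)+1)\phi(B_{n+1}) = \alpha\phi(P_{n+2}) + (1-\alpha)^2\phi(P_{n+1})$; plugging in (ii) and factoring $\alpha z + (1-\alpha)^2 = \alpha(z + (1-\alpha)^2/\alpha)$ produces the required form.

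For (iv), I would work in the regime $\lambda > 2$ (the case of interest for spectral radii). Then $\Delta_{\lambda,\alpha} > 0$, $st = (1-\alpha)^2 > 0$, and $s+t > 0$, so $s > t > 0$ and $(t/s)^n \to 0$. Retaining only the leading $s$-terms in (i)--(iii) and taking ratios, the factors $\Delta_{\lambda,\alpha}$ and the powers of $s$ cancel. Three Vieta-based identities then close the loop: (a) $s + (1-\alpha)^2/\alpha = s(\alpha+t)/\alpha$ from $st = (1-\alpha)^2$; (b) $(s+\alpha)(t+\alpha) = \alpha(\lambda-2)+1$ from expanding and using both $s+t$ and $st$; and (c) $\alpha + t = (\lambda-\Delta_{\lambda,\alpha})/2$ by definition of $t$. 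The first limit reduces via (a) and (c) to $(\alpha+t)/(\alpha(\lambda-2)+1) = h(\lambda)_\alpha$; the second, via (a) and (b), collapses to $1/(s+\alpha) = 2/(\lambda+\Delta_{\lambda,\alpha})$, which after rationalising and using $\lambda^2 - \Delta_{\lambda,\alpha}^2 = 4(\alpha(\lambda-2)+1)$ also equals $h(\lambda)_\alpha$. The main obstacle lies in exactly this last bookkeeping: a priori the two ratios collapse to the genuinely different-looking expressions $(\alpha+t)/(\alpha(\lambda-2)+1)$ and $1/(s+\alpha)$, and the identity $(s+\alpha)(t+\alpha)=\alpha(\lambda-2)+1$ coupling them is the technical key making both limits coincide.
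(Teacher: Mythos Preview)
Your argument is essentially the same as the paper's: both solve the common recurrence with characteristic roots $s,t$ (the paper by explicit induction, you by Binet's formula---these are equivalent), both derive (ii) from Lemma~\ref{e1234}(ii) via the identity $(z+\alpha)^2 = \lambda z + 2\alpha - 1$, both obtain (iii) from \eqref{e1}, and both compute (iv) by discarding the subdominant $t$-terms and simplifying with the Vieta relations $st=(1-\alpha)^2$ and $(s+\alpha)(t+\alpha)=\alpha(\lambda-2)+1$.

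One small gap to patch: your treatment of (iv) invokes formula (iii) and the identity $s + (1-\alpha)^2/\alpha = s(\alpha+t)/\alpha$, both of which require $\alpha>0$, whereas the statement asserts (iv) for all $\alpha\in[0,1)$. The paper handles $\alpha=0$ separately by observing that then $\phi(P_n)=\phi(B_n)=\phi(H_n)$, so both ratios reduce to $\phi(P_{n-1})/\phi(P_n)\to 1/s = t = h(\lambda)_0$; you should add this one-line case.
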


\begin{proof}
We start by noticing that $s$ and $t$ are the roots of the polynomial
$ x^2 -(\lambda-2\alpha)x +(1-\alpha)^2.$
Result (i) surely holds for $n \in\{0,1\}$. We now argue by induction on $n$. Suppose that $n \geqslant 1$. From \eqref{hn} and induction, it follows that
\begin{equation*}
\begin{split}
\phi(H_{n+1})&=(\lambda-2\alpha)\phi(H_{n})-(1-\alpha)^2\phi(H_{n-1})\\
           &=(\lambda-2\alpha)\Delta_{\lambda,\alpha}^{-1}(s^{n+1}-t^{n+1})-(1-\alpha)^2\Delta_{\lambda,\alpha}^{-1}(s^{n}-t^{n})\\
           &=\Delta_{\lambda,\alpha}^{-1}((\lambda-2\alpha)s-(1-\alpha)^2)s^{n}-\Delta_{\lambda,\alpha}^{-1}((\lambda-2\alpha)t-(1-\alpha)^2)t^{n}\\
         &=\Delta_{\lambda,\alpha}^{-1}(s^{n+2}-t^{n+2}).
\end{split}
\end{equation*}

Now that we know that (i) holds, Equalities (ii) and (iii)  come from Lemma \ref{e1234}(ii) and Lemma~\ref{e0} respectively, once we note  that, by definition, $s$ and $t$ both satisfy
\[ \lambda x + 2\alpha -1 = (x+\alpha)^2. \]
In order to prove (iv), the following identities turn out to be useful:
\begin{equation}\label{squash}
st=(1-\alpha)^2; \qquad \frac{t+\alpha}{a(\lambda-2)+1} = h(\lambda)_{\alpha},   \qquad \text{and} \qquad \frac{1}{(s+\alpha)^2}= \left(h(\lambda)_{\alpha} \right)^2.
\end{equation}
 It is also important to note that
 $ \lim_{n\rightarrow\infty} \left(t /s\right)^n =0,$ since $t<s$.
The cases $\alpha=0$ and $\alpha \in (0,1)$ will be dealt separately. If $\alpha=0$,
by definition we have $\phi(P_n)=\phi(B_n)=\phi(H_n)$. Therefore,
from (ii) we get
\[
\scalemath{1}{\lim\limits_{n\rightarrow\infty} \frac{\phi(B_{n-1})}{\phi (P_n)} =  \lim\limits_{n\rightarrow\infty} \frac{\phi(P_{n-1})}{\phi (P_n)} = \lim\limits_{n\rightarrow\infty} \frac{s^{n+1}-t^{n+1}}{s^{n+2}-t^{n+2}}
 =\frac{1}{s} \lim\limits_{n\rightarrow\infty} \frac{1-\left(\frac{t}{s}\right)^{n+1}}{1-\left(\frac{t}{s}\right)^{n+2}}=\frac{1}{s}=t=h(\lambda)_0},
\]
and, similarly,
   \[   \lim\limits_{n\rightarrow\infty} \frac{\phi(H_{n-2})}{\phi (B_{n-1})} =  \lim\limits_{n\rightarrow\infty} \frac{\phi(P_{n-2})}{\phi (P_{n-1})} =t=h(\lambda)_0 \]
as claimed. Let now $\alpha\in (0,1)$. Using (ii) and (iii), we obtain
 \[
\scalemath{.95}{ \begin{aligned}
\lim\limits_{n\rightarrow\infty} \frac{\phi(B_{n-1})}{\phi (P_n)} & = \lim\limits_{n\rightarrow\infty} \frac{\alpha}{ \alpha(\lambda-2)+1} \cdot \frac{(s+\alpha)^2 \left( s + \frac{(1-\alpha)^2}{\alpha}\right)s^{n-2} - (t+\alpha)^2 \left( t + \frac{(1-\alpha)^2}{\alpha}\right)^2t^{n-2}}{(s+\alpha)^2s^{n-1}-(t+\alpha)^2t^{n-1}} \\
&= \lim\limits_{n\rightarrow\infty} \frac{\alpha}{\alpha(\lambda-2)+1} \cdot \frac{1}{s} \cdot \frac{ \left( s + \frac{(1-\alpha)^2}{\alpha}\right) - \left(\frac{t+\alpha}{s+\alpha} \right)^2 \left( t + \frac{(1-\alpha)^2}{\alpha}\right)^2\left(\frac{t}{s} \right)^{n-2}}{1-\left(\frac{t+\alpha}{s+\alpha} \right)^2\left(\frac{t}{s} \right)^{n-1}}\\
& = \frac{\alpha}{ \alpha(\lambda-2)+1} \cdot \frac{1}{s} \cdot \left( s + \frac{(1-\alpha)^2}{\alpha}\right) \\
& = \frac{\alpha}{\alpha(\lambda-2)+1} \cdot \left(1+\frac{t}{\alpha}\right)\\
& =  \frac{t+\alpha}{\alpha(\lambda-2)+1}=h(\lambda)_{\alpha}.
\end{aligned} }
\]
For the last equality, we have used the second identity in \eqref{squash}. We now compute
\[ \scalemath{.95}{ \begin{aligned}
\lim\limits_{n\rightarrow\infty} \frac{\phi(H_{n-2})}{\phi (P_{n-1})} &= \lim\limits_{n\rightarrow\infty} \frac{(\alpha(\lambda-2)+1) (s^{n-1}-t^{n-1})}
{(s+\alpha)^2 (\alpha s +(1- \alpha)^2)s^{n-2} -  (t+\alpha)^2 (\alpha t +(1-\alpha)^2)t^{n-2}}\\[.8em]
          &= \lim\limits_{n\rightarrow\infty}  \frac{(\alpha(\lambda-2)+1) (s^{n-1}-t^{n-1})}
{(s+\alpha)^2 (\alpha s +st)s^{n-2} -  (t+\alpha)^2 (\alpha t +st)t^{n-2}}\\[.8em]
&= \lim\limits_{n\rightarrow\infty}  \frac{(\alpha(\lambda-2)+1) (1-\left(\frac{t}{s}\right)^{n-1})}
{(s+\alpha)^2 (\alpha  +t) -  (t+\alpha)^2 (\alpha  +s)\left(\frac{t}{s}\right)^{n-1}}\\
&= \frac{\alpha(\lambda-2)+1}{t+\alpha} \cdot \frac{1}{(s+\alpha)^2} = \frac{1}{h(\lambda)_{\alpha}}\cdot
\left(h(\lambda)_{\alpha} \right)^2 = h(\lambda)_{\alpha},
\end{aligned} } \]
where, for the last equality, we have used the second and the third identity in \eqref{squash}.
\end{proof}

According to  \cite{hof-smi}, an {\it internal path} of a graph $G$ is a walk $v_0 v_1 \dots v_k$ (here $k \geqslant 1$), where the vertices $v_1, \dots, v_k$ are pairwise distinct, $d(v_0) > 2$, $d(v_k) > 2$ and $d(v_i) = 2$ whenever $0 < i < k$. We say that an internal path is of {\em type I} (resp., {\em type II}) if $v_0=v_k$ (resp., $v_0\not=v_k$) (see Fig.~\ref{fig100}).

Subdividing an edge belonging to an internal paths has well-known $A$-spectral consequences (see \cite{hof-smi} or [1, p. 79]).  The impact on the $A_{\alpha}$-spectral radius when internal paths are involved is stated in \cite{li-chen-meng2}. Proposition~\ref{alpha-internal} deals with all cases. We recall that the {\em double snake of order $n \geqslant 6$} is the graph $W_n$ depicted in Fig.~\ref{fig1} and~\ref{fig100} containing  an internal path of type II $v_0 \dots  v_{n-5}$ such that $d(v_0)=d(v_{n-5})=3$.
\medskip

  \begin{figure}[h]
\begin{center}
\includegraphics[width=0.7\textwidth]{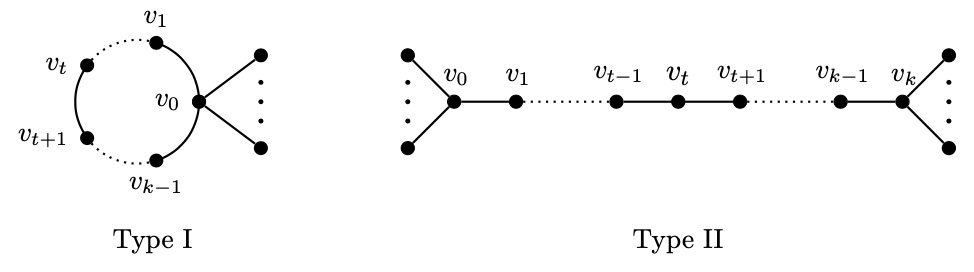}
  \caption{ \label{fig100}  \small The two types of internal path}
\end{center}
\end{figure}

\begin{prop}\label{alpha-internal}
Let $uv$ be an edge of the connected graph $G$, and let $G_{uv}$ be the graph obtained from $G$ by subdividing the edge $uv$ of $G$. Set $\alpha \in [0,1)$.
\begin{itemize}
\item[$\mathrm{(i)}$]
$\rho\!_{_{A_\alpha}}\!(C_n)=2$ and $\rho\!_{_{A_0}}\!(W_n)=2$;
\item[$\mathrm{(ii)}$]
If $(G,\alpha) \neq (C_n,\alpha)$ and $uv$ is not in an internal path of $G$, then $\rho\!_{_{A_{\alpha}}}\!(G_{uv})>\rho\!_{_{A_{\alpha}}}\!(G)$;
\item[$\mathrm{(iii)}$]
If $(G,\alpha) \neq (W_n,0)$ and $uv$ belongs to an internal path of $G$, then $\rho\!_{_{A_{\alpha}}}\!(G_{uv})<\rho\!_{_{A_{\alpha}}}\!(G)$.
\end{itemize}
\end{prop}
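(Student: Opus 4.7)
Part (i) is immediate. Since $C_n$ is $2$-regular, $A_\alpha(C_n)=2\alpha I+(1-\alpha)A(C_n)$, and the all-ones vector is a Perron eigenvector of $A(C_n)$ with eigenvalue $2$; hence $\rho_{A_\alpha}(C_n)=2\alpha+2(1-\alpha)=2$. For $\alpha=0$ we have $A_0(W_n)=A(W_n)$, and $W_n\in\mathcal{G}^{\,2}_A$ by Theorem~\ref{22+5}(ii).

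For (ii) I would proceed via a Rayleigh-quotient argument based on the Perron--Frobenius eigenvector. For $\alpha\in[0,1)$ and $G$ connected the matrix $A_\alpha(G)$ is nonnegative and irreducible, so $\rho=\rho_{A_\alpha}(G)$ is simple with a strictly positive unit eigenvector $x$. Let $w$ denote the new vertex of $G_{uv}$ and extend $x$ to a test vector $y$ on $V(G_{uv})$ by $y_i=x_i$ on $V(G)$ and $y_w=c>0$. A direct expansion gives
\[
y^{T}A_\alpha(G_{uv})y-\rho\,y^{T}y=(2\alpha-\rho)c^{2}+2(1-\alpha)c(x_u+x_v)-2(1-\alpha)x_u x_v.
\]
By Lemma~\ref{alpha-delta}(i) one has $\rho>2\alpha$, so this downward parabola in $c$ is maximized at $c^{*}=(1-\alpha)(x_u+x_v)/(\rho-2\alpha)$ with maximum value
\[
\frac{1-\alpha}{\rho-2\alpha}\bigl[(1-\alpha)(x_u-x_v)^{2}-2(\rho-2)x_u x_v\bigr].
\]
I would then argue this is strictly positive when $G\neq C_n$ and $uv$ lies outside every internal path, exploiting the pendant sub-path terminating at $uv$ to force both $x_u\neq x_v$ and (via the one-dimensional Perron recurrence along that pendant path) sufficient decay of $x$ to absorb $2(\rho-2)x_u x_v$ in the regime $\rho>2$. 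Rayleigh then yields $\rho_{A_\alpha}(G_{uv})>\rho$. For $G=C_n$ the eigenvector $x$ is constant, the maximum collapses to $0$, and subdivision gives $C_{n+1}$ with the same $A_\alpha$-spectral radius $2$.

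For (iii) the direction is reversed: I would start from the Perron vector $x'$ of $A_\alpha(G_{uv})$ and eliminate the coordinate $x'_w$ via the local relation $\rho'x'_w=2\alpha x'_w+(1-\alpha)(x'_u+x'_v)$, producing a test vector on $V(G)$. A swap/averaging step along the internal path $v_0\dots v_k$ containing $uv$ then gives a Rayleigh ratio for $A_\alpha(G)$ of at least $\rho'=\rho_{A_\alpha}(G_{uv})$, hence $\rho\geqslant\rho'$. Strict inequality is the delicate step: equality would force $x'$ to satisfy a nontrivial symmetry along the internal path together with the extremal constraint $\rho=\rho'=2$, and combining these with Theorem~\ref{22+5}(ii) isolates $(G,\alpha)=(W_n,0)$ as the unique exception.

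The main obstacle is the strict-inequality analysis in the regime $\rho\geqslant 2$: in (ii) the single-parameter test vector is insufficient without the pendant-path decay estimate, and in (iii) the rigidity argument pinpointing $(W_n,0)$ as the sole equality case is technically the most intricate point. The detailed execution of these steps for the $A_\alpha$-matrix is carried out in \cite{li-chen-meng2}.
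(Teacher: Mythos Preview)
Your treatment of (i) matches the paper. For (iii) both you and the paper ultimately defer to \cite{li-chen-meng2}; the paper's only addition is the explicit remark that the argument there works verbatim provided $\rho_{A_\alpha}(G)>2$, which holds for every $(G,\alpha)\neq(W_n,0)$ containing an internal path.

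Part (ii), however, has a genuine gap, and the paper's route is entirely different. Your one-parameter Rayleigh test vector is not strong enough: the maximum of your quadratic,
\[
\frac{1-\alpha}{\rho-2\alpha}\Bigl[(1-\alpha)(x_u-x_v)^{2}-2(\rho-2)x_u x_v\Bigr],
\]
can be \emph{negative}. Take $G=K_{1,6}$ with $\alpha=0$ and $uv$ a pendant edge (certainly not on any internal path). Then $\rho=\sqrt{6}$, the Perron vector satisfies $x_v=\sqrt{6}\,x_u$, and the bracket equals
$(7-2\sqrt6)x_u^{2}-(12-4\sqrt6)x_u^{2}=(2\sqrt6-5)x_u^{2}<0$.
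So the optimal $c$ yields $y^{T}A_0(G_{uv})y<\rho\,y^{T}y$, and Rayleigh gives nothing. Your proposed fix via ``decay along the pendant sub-path'' cannot apply here, since the pendant path has length~$1$; the same obstruction persists for every $K_{1,n}$ with $n\geqslant 6$. The citation to \cite{li-chen-meng2} does not help either: that lemma concerns internal paths, i.e.\ part (iii).

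The paper bypasses all of this with a one-line structural observation: if $uv$ lies on no internal path and $G\neq C_n$, then the maximal degree-$2$ chain through $uv$ must terminate at a pendant vertex, and deleting that pendant from $G_{uv}$ leaves a graph isomorphic to $G$. Hence $G$ is a proper subgraph of $G_{uv}$, and Lemma~\ref{alpha-delta}(ii) gives $\rho_{A_\alpha}(G_{uv})>\rho_{A_\alpha}(G)$ immediately. No eigenvector analysis is needed.
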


\begin{proof}
It is straightforward to show that the $A_\alpha$-spectral radius of a regular graph $G$ is the degree of $G$ \cite[Section~3.2]{niki}; hence,  $\rho_{A_\alpha}(C_n)=2$. The latter of (i) follows from \cite{hof-smi}.

In the hypotheses of (ii), it is not hard to show that $G_{uv}$ contains a proper subgraph isomorphic to $G$. Therefore, by Lemma~\ref{alpha-delta}(ii) we get $\rho\!_{_{A_{\alpha}}}\!(G_{uv})>\rho\!_{_{A_{\alpha}}}\!(G)$.

For (iii), we refer the reader to the proof of Lemma 1.1 in \cite{li-chen-meng2}, with the warning that in such proof
the authors assume that $G$ properly contains a double snake. What is really crucial is their argument is that $\rho\!_{_{A_{\alpha}}}\!(G)>2$, and this is true whenever $G$ contains an internal path of {\em any} type
and $(G,\alpha) \neq (W_n,0)$. In fact, from Lemmas 3.1(ii) and 3.2(ii) in \cite{WWXB},  it follows that  $\rho\!_{_{A_{\alpha}}}\!(W_n)>2$ if and only if $\alpha \neq 0$; moreover, if $G$ contains an internal path of type I,  {\em a fortiori} it contains a cycle $C_r$ as a proper subgraph; therefore, using (i) together with Lemma~\ref{alpha-delta}(ii), we immediately get $\rho\!_{_{A_{\alpha}}}\!(G)> \rho\!_{_{A_{\alpha}}}\!(C_r) =2$.
\end{proof}

\subsection{Limit points for the $A_\alpha$-spectral radius of compound graphs}\label{sub2}

 Let $v$ be an end vertex of the path $P_n$, and let $u$ be a vertex of a graph $G$ vertex-disjoint with respect to $P_n$. We denote by $G_u(P_n)$ the graph $Gu \!:\!vP_n$. Recall that $V(G_u(P_n))= V(G) \cup V(P_n)$ and $E(G_u(P_n))=E(G) \cup E(P_n) \cup \{uv\}$.

\begin{prop}\label{alpha-gupn}
\vspace{1em} The $A_{\alpha}$-spectral radius of the graph sequence $\{G_u(P_n)\}_{n \in \N}$ has a limit point $\chi_u(G) \geqslant 2$. If $\chi_u(G)>2$, then $\chi_u(G)$ is the largest root of the equation

 $$\left(1-\alpha\cdot h(\lambda)_{\alpha} \right)\phi(G)-
\left(\alpha-(2\alpha-1)\cdot h(\lambda)_{\alpha}\right)\phi(G)_u=0,
$$
where $h(\lambda)_{\alpha}$ is defined in \eqref{kazz1}.
\end{prop}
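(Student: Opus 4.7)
My plan is to (a) show the sequence $\rho_{A_\alpha}(G_u(P_n))$ converges, (b) extract an equation for the limit by dividing the characteristic polynomial of $G_u(P_n)$ by that of $P_n$, and (c) argue maximality of the root.

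For (a), observe that $G_u(P_{n-1})$ is a proper subgraph of $G_u(P_n)$, so Lemma~\ref{alpha-delta}(ii) gives strict monotonicity of $\rho_{A_\alpha}(G_u(P_n))$, while Lemma~\ref{alpha-delta}(i), combined with the easy bound $\Delta(G_u(P_n))\leqslant \Delta(G)+1$, shows the sequence is bounded. Hence it converges to some $\chi_u(G)$. Since $P_n$ is also a proper subgraph of $G_u(P_n)$, a second application of Lemma~\ref{alpha-delta}(ii) together with $\rho_{A_\alpha}(P_n)\to 2$ (Lemma~\ref{alpha-pn}) yields $\chi_u(G)\geqslant 2$.

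For (b), I would apply Lemma~\ref{alpha-g1g2} with $G_1=G$ attached at $u$ and $G_2=P_n$ attached at an end-vertex $v$; using the identification $\phi(P_n)_v=\phi(B_{n-1})$, this becomes
$$\phi(G_u(P_n)) = \phi(P_n)\bigl[\phi(G)-\alpha\phi(G)_u\bigr] - \phi(B_{n-1})\bigl[\alpha\phi(G)-(2\alpha-1)\phi(G)_u\bigr].$$
Write $\lambda_n=\rho_{A_\alpha}(G_u(P_n))$. The strict monotonicity above forces $\lambda_n>\rho_{A_\alpha}(P_n)$, whence $\phi(P_n)(\lambda_n)>0$; dividing the identity $\phi(G_u(P_n))(\lambda_n)=0$ by this positive quantity gives
$$\phi(G)(\lambda_n)-\alpha\phi(G)_u(\lambda_n) = \frac{\phi(B_{n-1})(\lambda_n)}{\phi(P_n)(\lambda_n)}\bigl[\alpha\phi(G)(\lambda_n)-(2\alpha-1)\phi(G)_u(\lambda_n)\bigr].$$
Assuming $\chi_u(G)>2$, the closed forms in Proposition~\ref{gongshi}(i)--(iii), together with $|t/s|<1$ valid on $(2,\infty)$, upgrade the pointwise convergence $\phi(B_{n-1})/\phi(P_n)\to h(\lambda)_\alpha$ of Proposition~\ref{gongshi}(iv) to uniform convergence on compact sub-intervals of $(2,\infty)$. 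Since $\lambda_n\to \chi_u(G)$ eventually lies in such a sub-interval, passing to the limit and rearranging produces precisely the displayed equation evaluated at $\chi_u(G)$.

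For (c), if $\mu>\chi_u(G)$, the monotonicity forces $\mu>\lambda_n$ for every $n$, so both $\phi(G_u(P_n))(\mu)$ and $\phi(P_n)(\mu)$ are positive (values of monic polynomials above their largest zero). Consequently the ratio is strictly positive, and passing to the limit gives $F(\mu)\geqslant 0$, where $F$ denotes the left-hand side of the proposed equation. The main technical obstacle is promoting $\geqslant$ to $>$ in order to exclude tangential zeros above $\chi_u(G)$. I would handle this through the real-analyticity of $F$ on $(2,\infty)$: its zeros are isolated, and a Sturm/Stieltjes-type monotonicity of the ratios $\phi(B_{n-1})/\phi(P_n)$ (already visible in the explicit formulas of Proposition~\ref{gongshi}, since $A_\alpha(P_n)$ is a Jacobi matrix) rules out $F$ touching zero from above a second time.
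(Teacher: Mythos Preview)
Your approach matches the paper's: both factor $\phi(G_u(P_n))=\phi(P_n)\cdot\Gamma_n$ via Lemma~\ref{alpha-g1g2}, then invoke Proposition~\ref{gongshi}(iv) to pass to the limit, using $\rho_{A_\alpha}(P_n)\leqslant 2<\chi_u(G)$ to discard the $\phi(P_n)$ factor. Your uniform-convergence justification in (b) and the maximality discussion in (c) go beyond the paper, which simply asserts that $\chi_u(G)$ is the largest root of $\lim_n\Gamma_n=0$ without isolating the tangential-zero concern you raise.
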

\begin{proof}
The inequalities
$$\rho\!_{_{A_{\alpha}}}\!(G_u(P_n))<\rho\!_{_{A_{\alpha}}}\!(G_u(P_{n+1})) \qquad \text{and} \qquad \rho\!_{_{A_{\alpha}}}\!(G_u(P_n))\leqslant\Delta(G_u(P_n))$$
 both come from Lemma \ref{alpha-delta}. Altogether, they imply that $\lim_{n\rightarrow\infty}\rho\!_{_{A_{\alpha}}}\!(G_u(P_n))=\chi_u(G)$ exists. If $G_u(P_n)$ is a path, then by Lemma \ref{alpha-pn}, we get $\chi_u(G)=2$. Since $G_u(P_n)$ properly contains $P_n$,
then $\rho\!_{_{A_{\alpha}}}\!(G_u(P_n)) > \rho\!_{_{A_{\alpha}}}\!(P_n)$; hence,  $\chi_u(G) \geqslant 2$.

Suppose now $\chi_u(G)>2$. From Lemma \ref{alpha-g1g2} we get
\begin{equation}\label{vaffax}
\begin{split}
\phi(G_u(P_n))&=\phi(G)\phi(P_n)-\alpha\phi(G)_u\phi(P_n)-\alpha\phi(G)\phi(B_{n-1})+(2\alpha-1)\phi(G)_u\phi(B_{n-1})\\[.2cm]
              &=\phi(P_n)\left( \left(1-\alpha \frac{\phi(B_{n-1})}{\phi(P_n)}     \right)\phi(G) - \left(\alpha -(2\alpha-1) \frac{\phi(B_{n-1})}{\phi(P_n)}   \right) \phi(G)_u \right).
\end{split}
\end{equation}
Since $\chi_u(G)>2$ and $\rho\!_{_{A_{\alpha}}}\!(P_n) \leqslant 2 $, then $\chi_u(G)$ is the largest positive root of the following equation:
\[ \lim\limits_{n\rightarrow\infty}\left( \left(1-\alpha \frac{\phi(B_{n-1})}{\phi(P_n)}     \right)\phi(G) - \left(\alpha -(2\alpha-1) \frac{\phi(B_{n-1})}{\phi(P_n)}   \right) \phi(G)_u \right)=0,\]
and Proposition~\ref{gongshi}(iv) ensures that
$\displaystyle \lim\limits_{n\rightarrow\infty} \phi(B_{n-1})/\phi(P_n)=  h(\lambda)_{\alpha}$.
\end{proof}

\begin{cor}\label{K13P5Pn_1} Let $u$ be the vertex in of degree 3 in $K_{1,3}$. Then,
$$\lim\limits_{n\rightarrow\infty}\rho\!_{_{A_{\alpha}}}\!((K_{1,3})_u(P_n))=\frac{1}{2}\left(5\alpha+3\sqrt{2-4\alpha+3\alpha^2}\right).$$
\end{cor}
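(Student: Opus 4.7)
The plan is to invoke Proposition~\ref{alpha-gupn} with $G = K_{1,3}$ and $u$ its central vertex, so that $\chi_u(K_{1,3})$ is identified as the largest root of the equation displayed there. This reduces the task to (a) computing the two polynomials $\phi(K_{1,3})$ and $\phi(K_{1,3})_u$, and (b) solving the resulting scalar equation in $\lambda$.

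For (a), deletion of $u$ from $K_{1,3}$ leaves three isolated vertices, each contributing a diagonal entry $\alpha$ in $A_\alpha$, so $\phi(K_{1,3})_u = (\lambda-\alpha)^3$. A cofactor expansion along the $u$-row of $\lambda I - A_\alpha(K_{1,3})$ (diagonal entry $\lambda - 3\alpha$, with three off-diagonal entries equal to $-(1-\alpha)$) yields
\[
\phi(K_{1,3}) = (\lambda-3\alpha)(\lambda-\alpha)^3 - 3(1-\alpha)^2(\lambda-\alpha)^2 = (\lambda-\alpha)^2\bigl(\lambda^2 - 4\alpha\lambda + 6\alpha - 3\bigr).
\]
Substituting both polynomials into Proposition~\ref{alpha-gupn} and cancelling the common factor $(\lambda-\alpha)^2$ recasts the equation, after collecting the $h(\lambda)_\alpha$-terms, as
\[
\lambda^2 - 5\alpha\lambda + \alpha^2 + 6\alpha - 3 \;=\; h(\lambda)_\alpha\bigl[\alpha\lambda^2 + (1-2\alpha-4\alpha^2)\lambda + 4\alpha(2\alpha-1)\bigr]. \quad (\ast)
\]

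Writing $\lambda_0 := \tfrac{1}{2}(5\alpha + 3s)$ with $s := \sqrt{2 - 4\alpha + 3\alpha^2}$ and squaring the identity $2\lambda_0 - 5\alpha = 3s$ gives the auxiliary relation $\lambda_0^2 - 5\alpha\lambda_0 = (\alpha^2 - 18\alpha + 9)/2$. I would use this identity to simplify both sides of $(\ast)$ at $\lambda = \lambda_0$: a short calculation reduces the left-hand side to $3(1-\alpha)^2/2$, while the bracket on the right becomes $(1-\alpha)^2(2\lambda_0 + \alpha)/2$. It then suffices to verify the single scalar relation $h(\lambda_0)_\alpha = 3/(2\lambda_0+\alpha)$, or equivalently $\lambda_0 - \Delta_{\lambda_0,\alpha} = s + \alpha$. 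This reduces to checking the two clean identities $\Delta_{\lambda_0,\alpha} = (3\alpha + s)/2$ (by squaring and using $s^2 = 3\alpha^2 - 4\alpha + 2$) and $2\alpha(\lambda_0 - 2) + 2 = (s+\alpha)(s+2\alpha)$ (direct expansion).

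The main obstacles are the algebraic bookkeeping at $\lambda_0$ and the need to certify $\lambda_0$ as the \emph{largest} root of $(\ast)$ rather than an extraneous solution introduced by squaring, together with the applicability of Proposition~\ref{alpha-gupn}, which requires $\chi_u(K_{1,3}) > 2$. The former should yield to careful work in $\mathbb{Z}[\alpha,s]/(s^2 - 3\alpha^2 + 4\alpha - 2)$, keeping every expression linear in $s$. For the latter, I would combine the lower bound $\chi_u(K_{1,3}) \geqslant \rho_{A_\alpha}(K_{1,3}) = 2\alpha + \sqrt{4\alpha^2 - 6\alpha + 3}$ (from monotonicity of the sequence $\{\rho_{A_\alpha}((K_{1,3})_u(P_n))\}$ together with Lemma~\ref{alpha-delta}(ii)) and the upper bound $\chi_u(K_{1,3}) \leqslant 4$ (from Lemma~\ref{alpha-delta}(i), since $\Delta((K_{1,3})_u(P_n)) = 4$) with the explicit value $\lambda_0\bigr|_{\alpha = 0} = 3\sqrt{2}/2 > 2$, extended to all $\alpha \in [0,1)$ by monotonicity in $\alpha$ (Lemma~\ref{alpha-delta}(iii)). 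This guarantees that $\chi_u(K_{1,3}) > 2$ throughout, so $(\ast)$ is genuinely applicable, and pins down $\lambda_0$ uniquely among the candidate roots.
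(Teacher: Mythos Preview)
Your approach is essentially the paper's: compute $\phi(K_{1,3})=(\lambda-\alpha)^2(\lambda^2-4\alpha\lambda+6\alpha-3)$ and $\phi(K_{1,3})_u=(\lambda-\alpha)^3$, feed them into Proposition~\ref{alpha-gupn}, and extract the largest root (the paper simply asserts the result of this last step, whereas you spell out a verification in $\mathbb{Z}[\alpha,s]/(s^2-3\alpha^2+4\alpha-2)$).

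There is one small circularity to repair. To justify the hypothesis $\chi_u(K_{1,3})>2$ of Proposition~\ref{alpha-gupn}, you invoke ``the explicit value $\lambda_0\bigr|_{\alpha=0}=3\sqrt{2}/2$'', but $\lambda_0=\chi_u(K_{1,3})$ is precisely the equality you are trying to establish, so this step begs the question (and the lower bound $\chi_u\geqslant\rho_{A_\alpha}(K_{1,3})$ that you also mention is only $\sqrt{3}$ at $\alpha=0$, hence insufficient on its own). The paper's fix is short and non-circular: for every $n\geqslant 2$,
\[
\rho_{A_\alpha}\!\bigl((K_{1,3})_u(P_n)\bigr)\;\geqslant\;\rho_{A_0}\!\bigl((K_{1,3})_u(P_n)\bigr)\;\geqslant\;\rho_{A_0}\!\bigl((K_{1,3})_u(P_2)\bigr)\;>\;2,
\]
by Lemma~\ref{alpha-delta}(iii), Lemma~\ref{alpha-delta}(ii), and a direct numerical check at $(\alpha,n)=(0,2)$ respectively. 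Substituting this chain for your last paragraph removes the circularity and completes the proof.
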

\begin{proof}
Lemma~\ref{alpha-delta} and a direct calculation guarantee that, for all $n\geqslant 2$,
$$\rho\!_{_{A_{\alpha}}}\! (K_{1,3}(P_n)) \geqslant \rho\!_{_{A_0}}\! (K_{1,3}(P_n)) \geqslant \rho\!_{_{A_0}}\! (K_{1,3}(P_2)) >2.$$
Therefore, from Proposition~\ref{alpha-gupn}, it follows that $\lim\limits_{n\rightarrow\infty}\rho\!_{_{A_{\alpha}}}\!(K_{1,3}(P_n))$ is the largest root of the following equation:

$$\left(1-\alpha h(\lambda)_{\alpha} \right)\phi(K_{1,3})-
\left(\alpha-(2\alpha-1)\cdot h(\lambda)_{\alpha} \right)\phi(K_{1,3})_u =0.$$
 Plugging into the equation above
$$  \phi(K_{1,3})=(\lambda-\alpha)^2(\lambda^2-4\alpha \lambda+6\alpha-3) \qquad \text{and}  \qquad \phi(K_{1,3})_u=(\lambda-\alpha)^3,$$
we find that the largest root of the above equation is $\displaystyle \frac{1}{2}\left(5\alpha+3\sqrt{2-4\alpha+3\alpha^2}\right)$.
\end{proof}

Let $2P_n$ be the disjoint union of two copies of $P_n$ and let $u_1,u_2 \in V(2P_n)$ be end-vertices belonging to different components. For every non-trivial connected graph $G$ and every $u \in V(G)$, we consider the graph $G_u(P_n,P_n)$ obtained by adding to $G \cup 2P_n$ the edges $uu_1$ and $uu_2$.

\begin{prop}\label{alpha-gupnpn}
The $A_{\alpha}$-spectral radius of the graph sequence $\{G_u(P_n,P_n)\}_{n \in \N}$ has a limit point $\chi'_u(G) \geqslant 2$. If $\chi'_u(G)>2$, then $\chi'_u(G)$ is the largest root of the equation $\Theta(\lambda)_{G,u,\alpha, \infty}=0$, where
\begin{equation*}
\Theta(\lambda)_{G,u,\alpha, \infty}= \left(1-\alpha h(\lambda)_{\alpha}\right)\left(\phi(G)(1-\alpha h(\lambda)_{\alpha})-2\alpha\phi(G)_u+2(2\alpha-1)\phi(G)_uh(\lambda)_{\alpha}\right),\end{equation*}
and $h(\lambda)_{\alpha}$ is defined in \eqref{kazz1}.
\end{prop}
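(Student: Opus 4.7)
The plan is to mirror the strategy of the proof of Proposition \ref{alpha-gupn}, with the extra bookkeeping that a second pendant path attached at $u$ forces.

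First, I would establish the existence of the limit. Since $G_u(P_n,P_n)$ is a proper subgraph of $G_u(P_{n+1},P_{n+1})$, Lemma \ref{alpha-delta}(ii) gives strict monotonicity of $\rho\!_{_{A_{\alpha}}}\!(G_u(P_n,P_n))$ in $n$; moreover, for $n$ large the maximum vertex degree stabilizes at $\max\{\Delta(G),\,d_G(u)+2\}$, so Lemma \ref{alpha-delta}(i) bounds the sequence uniformly. Hence $\chi'_u(G)$ exists. Because $P_n$ is a proper subgraph of $G_u(P_n,P_n)$, a further application of Lemma \ref{alpha-delta}(ii) combined with Lemma \ref{alpha-pn} yields $\chi'_u(G)\geqslant \lim_n \rho\!_{_{A_{\alpha}}}\!(P_n)=2$.

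For the second part, assume $\chi'_u(G)>2$. Setting $G_1=G_u(P_n)$ and regarding $G_u(P_n,P_n)$ as $G_1\,u\!:\!v\,P_n$, I apply Lemma \ref{alpha-g1g2}:
\[
\phi(G_u(P_n,P_n))=\phi(G_1)\phi(P_n)-\alpha\phi(G_1)_u\phi(P_n)-\alpha\phi(G_1)\phi(B_{n-1})+(2\alpha-1)\phi(G_1)_u\phi(B_{n-1}).
\]
The decisive computation is
\[
\phi(G_1)_u=\phi(G)_u\,\phi(B_n),
\]
\emph{not} the naive $\phi(G)_u\,\phi(P_n)$: deleting the row and column of $u$ from $A_\alpha(G_1)$ produces a block-diagonal matrix whose first block is $A_\alpha(G)_u$, while the second block is indexed by $V(P_n)$ but inherits the diagonal entry $2\alpha$ at the end vertex $v$ (since $d_{G_1}(v)=2$), so this second block is precisely $B_n$.

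I would then substitute this identity together with the expression for $\phi(G_1)=\phi(G_u(P_n))$ derived inside the proof of Proposition \ref{alpha-gupn} and divide the whole equality by $\phi(P_n)^2$. Combining Proposition \ref{gongshi}(iv) with the recursion $\phi(B_n)=\phi(P_n)-\alpha\phi(B_{n-1})$ (established in the proof of Lemma \ref{e0}), the relevant ratios satisfy
\[
\frac{\phi(B_{n-1})}{\phi(P_n)}\longrightarrow h(\lambda)_\alpha,\qquad \frac{\phi(B_n)}{\phi(P_n)}\longrightarrow 1-\alpha\,h(\lambda)_\alpha.
\]
Passing to the limit and factoring out $(1-\alpha h(\lambda)_\alpha)$ from the resulting expression—one copy comes from the $(1-\alpha r)$ that already accompanied $\phi(G_1)$ in the single-path case, and a second copy arises from the new $\phi(B_n)/\phi(P_n)$ limit—yields exactly $\Theta(\lambda)_{G,u,\alpha,\infty}$. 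Since $\rho\!_{_{A_{\alpha}}}\!(P_n)\leqslant 2<\chi'_u(G)$, division by $\phi(P_n)^2$ is legitimate near $\chi'_u(G)$, so $\chi'_u(G)$ is a root of $\Theta=0$; being the limit of a strictly increasing sequence of spectral radii, it must be the largest such root.

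The main obstacle I anticipate is precisely the identification $\phi(G_1)_u=\phi(G)_u\,\phi(B_n)$: a careless reading of Lemma \ref{alpha-g1g2} would suggest $\phi(P_n)$ in place of $\phi(B_n)$, but the degree perturbation at $v$ caused by the edge $uv$ forces the appearance of $B_n$, and it is exactly this substitution that generates the second factor $(1-\alpha h(\lambda)_\alpha)$ which distinguishes $\Theta(\lambda)_{G,u,\alpha,\infty}$ from its single-path analogue in Proposition \ref{alpha-gupn}.
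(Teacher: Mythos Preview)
Your proposal is correct and follows essentially the same route as the paper. The paper likewise writes $G_u(P_n,P_n)=(G_u(P_n))\,u\!:\!v\,P_n$, applies Lemma~\ref{alpha-g1g2}, substitutes \eqref{vaffax} for $\phi(G_u(P_n))$, and uses the recursion \eqref{e2} to factor the result as $\phi(P_n)^2\,\Theta(\lambda)_{G,u,\alpha,n}$; your explicit identification $\phi(G_1)_u=\phi(G)_u\,\phi(B_n)$ is exactly the step the paper hides under the phrase ``applying twice Lemma~\ref{alpha-g1g2} and using \eqref{e2}''.
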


\begin{proof}
By Lemma~\ref{alpha-delta}(i), $\{G_u(P_n,P_n)\}_{n \in \N}$ is a nested sequence of graphs whose $A_{\alpha}$-spectral radius is limited above by $\Delta(G)+2$. Therefore,
$\lim_{n\rightarrow\infty}\rho\!_{_{A_{\alpha}}}\!(G_u(P_n,P_n))=\chi'_u(G)$ exists, and $\chi'_u(G) \geqslant \chi_u(G) \geqslant 2$, since $G_u(P_n,P_n)$ properly contains $G_u(P_n)$, and Proposition~\ref{alpha-gupnpn} holds.

Now, by applying twice Lemma \ref{alpha-g1g2} and using \eqref{e2} and \eqref{vaffax},
we get
\begin{equation*}
\begin{split}
\phi(G_u(P_n,P_n)) &= \phi(G_u(P_n))\phi(P_n)-\alpha\phi(G_u)\phi(B_n)\phi(P_n) \\   & \phantom{=}
-\alpha\phi(G_u(P_n))\phi(B_{n-1})+(2\alpha-1)\phi(G)_u\phi(B_n)\phi(B_{n-1}).
\\[.8em]
&= \phi(G)(\phi(P_n)-\alpha\phi(B_{n-1}))^2-2\alpha\phi(G)_u\phi(P_n)(\phi(P_n)-\alpha\phi(B_{n-1}))\\
                  &\phantom{=}
                  +2(2\alpha-1)\phi(G)_u\phi(B_{n-1})(\phi(P_n)-\alpha\phi(B_{n-1}))
                  \end{split}
\end{equation*}
which is also equal to $\phi(P_n)^2 \Theta(\lambda)_{G,u,\alpha,n}$, where
{ \small$$ \Theta(\lambda)_{G,u,\alpha,n}= \left(1-\alpha\frac{\phi(B_{n-1})}{\phi(P_n)}\right)\left(\phi(G)\left(1-\alpha\frac{\phi(B_{n-1})}{\phi(P_n)}\right)
                     -2\alpha\phi(G)_u +2(2\alpha-1)\phi(G)_u\frac{\phi(B_{n-1})}{\phi(P_n)}\right).$$}
By Lemma~\ref{alpha-delta}, no roots of $(\phi(P_n))^2$ are larger than $2$; therefore,
if $\chi'(G)>2$, such number is the largest positive root of
$$ \lim\limits_{n\rightarrow\infty} \Theta(\lambda)_{G,u,\alpha, n}= \Theta(\lambda)_{G,u,\alpha,\infty},$$
where the equality comes from Proposition~\ref{gongshi}(iv).
\end{proof}

\medskip
%
%
%

  \begin{figure}[h]
\begin{center}
\includegraphics[width=0.6\textwidth]{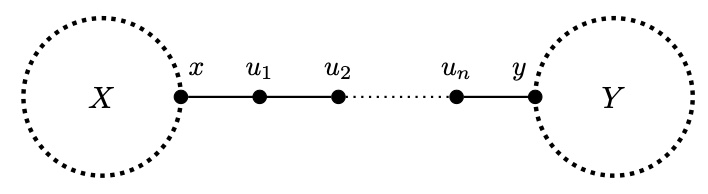}
  \caption{ \label{fig3}  \small The graph $XY(x,y;n)$}
\end{center}
\end{figure}

\begin{prop}\label{alpha-gxy} Let $X$ and $Y$
be two vertex-disjoint connected graphs, and let $G_n=XY(x,y;n)$ be the graph  obtained by joining $x \in V(X)$ and $y \in V(Y)$  by a path of length $n+1$ (see Fig.~\ref{fig3}). Then,
\begin{equation}\label{mink}
\lim\limits_{n\rightarrow\infty}\rho\!_{_{A_{\alpha}}}\!(G_n)=\max \left\{ \lim\limits_{n\rightarrow\infty}\rho\!_{_{A_{\alpha}}}\!(X_x(P_n)),\lim\limits_{n\rightarrow\infty}\rho\!_{_{A_{\alpha}}}\!(Y_y(P_n)) \right\}.
\end{equation}
\end{prop}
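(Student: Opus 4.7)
The plan is to prove \eqref{mink} by combining a straightforward subgraph lower bound with an asymptotic analysis of the $A_\alpha$-characteristic polynomial $\phi(G_n,\lambda)$ as $n\to\infty$. Boundedness $\rho_{A_\alpha}(G_n)\leqslant\Delta(G_n)\leqslant\max\{\Delta(X),\Delta(Y),d_X(x)+1,d_Y(y)+1\}$ (independent of $n$) ensures $\{\rho_{A_\alpha}(G_n)\}$ has finite $\liminf$ and $\limsup$. Since $G_n$ properly contains both $X_x(P_n)$ and $Y_y(P_n)$, Lemma~\ref{alpha-delta}(ii) together with Proposition~\ref{alpha-gupn} immediately yields
$$\liminf_{n\to\infty}\rho_{A_\alpha}(G_n)\;\geqslant\;\max\{\chi_x(X),\chi_y(Y)\}\geqslant 2,$$
where I write $\chi_u(G)$ for the limit constructed in Proposition~\ref{alpha-gupn}.

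For the reverse bound I would proceed as follows. Let $\phi_n^Y:=\phi(Y_y(P_n))$ and $\beta_n^Y:=\phi(Y_y(P_n))_{v_1}$, where $v_1$ is the free endpoint of the pendant path. Lemma~\ref{alpha-g1g2} applied to the edge $xv_1$ of $G_n$ gives
$$\phi(G_n)=A\,\phi_n^Y-B\,\beta_n^Y,\qquad A:=\phi(X)-\alpha\phi(X)_x,\quad B:=\alpha\phi(X)-(2\alpha-1)\phi(X)_x.$$
A second application of Lemma~\ref{alpha-g1g2} (to the edge $v_1v_2$ of $Y_y(P_n)$, splitting off the pendant $\{v_1\}$) yields $\phi_n^Y=(\lambda-\alpha)\phi_{n-1}^Y-(\alpha(\lambda-2)+1)\beta_{n-1}^Y$; for $\beta_n^Y$, the key identity $A_\alpha(Y_y(P_n))_{v_1}=A_\alpha(Y_y(P_{n-1}))+\alpha\,e_{v_2}e_{v_2}^T$ combined with the matrix-determinant lemma gives $\beta_n^Y=\phi_{n-1}^Y-\alpha\beta_{n-1}^Y$. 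The transfer matrix of this coupled recursion has characteristic polynomial $r^2-(\lambda-2\alpha)r+(1-\alpha)^2$, whose roots are precisely the $s,t$ of Proposition~\ref{gongshi}. Diagonalising with $(\phi_0^Y,\beta_0^Y)=(\phi(Y),\phi(Y)_y)$ then yields, for every $\lambda>2$,
$$\phi_n^Y\sim(s+\alpha)\,c_1^Y(\lambda)\,s^n,\qquad \beta_n^Y\sim c_1^Y(\lambda)\,s^n,\qquad c_1^Y(\lambda)=\frac{\phi(Y)-(\alpha+t)\phi(Y)_y}{s-t}.$$

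Let $\chi'$ be any accumulation point of $\{\rho_{A_\alpha}(G_n)\}$. The cases $\chi'\leqslant 2$ and $\chi'=\chi_y(Y)$ are immediate; otherwise $\chi'>2$ and $c_1^Y(\chi')\neq 0$, and along a subsequence $\lambda_k:=\rho_{A_\alpha}(G_{n_k})\to\chi'$ eventually bounded below by $2+\epsilon$, the uniform decay of $(t(\lambda)/s(\lambda))^n$ on $[2+\epsilon,M]$ together with continuity of $c_1^Y$ gives $\phi_{n_k}^Y(\lambda_k)/\beta_{n_k}^Y(\lambda_k)\to\alpha+s(\chi')$. Dividing $A(\lambda_k)\phi_{n_k}^Y(\lambda_k)=B(\lambda_k)\beta_{n_k}^Y(\lambda_k)$ (which expresses $\phi(G_{n_k},\lambda_k)=0$) by $\beta_{n_k}^Y(\lambda_k)$ and passing to the limit yields $A(\chi')(\alpha+s(\chi'))=B(\chi')$. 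Using the identities $(s+\alpha)(\alpha+t)=\alpha(\lambda-2)+1$ and $h(\lambda)_\alpha=1/(s+\alpha)$, this rearranges to $s(\chi')\bigl[\phi(X)-(\alpha+t(\chi'))\phi(X)_x\bigr]=0$; since $s(\chi')>0$, $\chi'$ satisfies the defining equation of $\chi_x(X)$ from Proposition~\ref{alpha-gupn} (in its equivalent form $A-h(\lambda)_\alpha B=0$), so $\chi'\leqslant\chi_x(X)\leqslant\max\{\chi_x(X),\chi_y(Y)\}$. Therefore $\limsup_n\rho_{A_\alpha}(G_n)\leqslant\max\{\chi_x(X),\chi_y(Y)\}$, which combined with the lower bound proves \eqref{mink}.

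The principal obstacle is the correct treatment of $\beta_n^Y$: a naive identification $\beta_n^Y=\phi(Y_y(P_{n-1}))$ is wrong for $\alpha>0$, since removing $v_1$ from $A_\alpha(Y_y(P_n))$ leaves the diagonal entry at $v_2$ equal to $2\alpha$ rather than the $\alpha$ found in $A_\alpha(Y_y(P_{n-1}))$. The rank-one matrix-determinant-lemma correction is essential and is precisely what makes the transfer matrix split with the eigenvalues $s,t$ of Proposition~\ref{gongshi}, producing the clean factored asymptotic in which each factor corresponds to one side of $G_n$. A secondary technical point is ensuring the uniform decay of $(t(\lambda)/s(\lambda))^n$ on compact subsets of $(2,\infty)$, so that the limit $\lambda_k\to\chi'$ can be taken inside the ratio $\phi_{n_k}^Y(\lambda_k)/\beta_{n_k}^Y(\lambda_k)$.
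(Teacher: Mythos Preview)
Your argument is correct, and its core strategy---decompose $\phi(G_n)$ via Lemma~\ref{alpha-g1g2} at the edge joining $X$ to the path, then exploit that the path-side data satisfy a linear recursion with characteristic roots $s,t$---is the same mechanism the paper uses. The organisation differs in two ways. First, the paper establishes existence of $\lim_n\rho_{A_\alpha}(G_n)$ up front by observing that the connecting path is eventually internal, so Proposition~\ref{alpha-internal}(iii) makes the sequence monotone (or else $G_n$ is already of the form $X_x(P_{n+|V(Y)|})$ or $Y_y(P_{n+|V(X)|})$); you instead sandwich via $\liminf/\limsup$ over accumulation points, which is equally valid but forces you to carry the uniform-convergence caveat for $(t/s)^n$ explicitly. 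Second, rather than setting up a fresh transfer matrix for $(\phi_n^Y,\beta_n^Y)$, the paper decomposes once more by Lemma~\ref{alpha-g1g2} to express everything in terms of $\phi(P_n),\phi(B_{n-1}),\phi(H_{n-2})$ and then invokes the ready-made limits of Proposition~\ref{gongshi}(iv). This yields a directly \emph{factored} limit $\lim_n\Gamma_n=F_X(\lambda)\cdot F_Y(\lambda)$, symmetric in $X$ and $Y$, whose two factors are precisely the defining equations from Proposition~\ref{alpha-gupn}; your asymmetric splitting reaches the same conclusion through the dichotomy $c_1^Y(\chi')=0$ (which is exactly $F_Y(\chi')=0$, since $\alpha+t=(\alpha(\lambda-2)+1)h(\lambda)_\alpha$) versus $c_1^Y(\chi')\neq0$ (giving $F_X(\chi')=0$). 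The paper's route is shorter because it reuses Proposition~\ref{gongshi}; yours is more self-contained and makes the rank-one correction $\beta_n^Y=\phi_{n-1}^Y-\alpha\beta_{n-1}^Y$ (the point you rightly flag as the main pitfall) fully explicit.
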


\begin{proof}
The existence of $\lim_{n\rightarrow\infty}\rho\!_{_{A_{\alpha}}}\!(X_x(P_n))$ and $\lim_{n\rightarrow\infty}\rho\!_{_{A_{\alpha}}}\!(Y_y(P_n))$ follows from Proposition \ref{alpha-gupn}(i). We refer the reader to Fig.~\ref{fig3} for notation. If the path $u_1u_2\dots u_n$ is contained in an internal path of $G_n$, then from Lemma \ref{alpha-delta}(i) and Proposition \ref{alpha-internal}(iii) we know that $\lim_{n\rightarrow\infty}\rho\!_{_{A_{\alpha}}}\!(G_n)$ exists. Otherwise, $G_n$ is either of type $X_x(P_{n+\lvert V(Y)\rvert})$ or $Y_y(P_{n+\lvert V(X) \rvert})$. Thus, by Proposition \ref{alpha-gupn},
$\lim_{n\rightarrow\infty} \rho\!_{_{A_{\alpha}}}\!(G_n)$  exists as well. Since the smallest limit point for the $A_{\alpha}$-spectral radius is $2$ (see \cite[Theorem~1.1]{WWXB}), we get
\begin{multline}\label{kxk}  2 \leqslant \min \left\{ \lim\limits_{n\rightarrow\infty}\rho\!_{_{A_{\alpha}}}\!(X_x(P_n)),\lim\limits_{n\rightarrow\infty}\rho\!_{_{A_{\alpha}}}\!(Y_y(P_n)) \right\} \\
\leqslant \max \left\{ \lim\limits_{n\rightarrow\infty}\rho\!_{_{A_{\alpha}}}\!(X_x(P_n)),\lim\limits_{n\rightarrow\infty}\rho\!_{_{A_{\alpha}}}\!(Y_y(P_n)) \right\} \leqslant \lim\limits_{n\rightarrow\infty}\rho\!_{_{A_{\alpha}}}\!(G_n), \end{multline}
where the third inequality comes from Lemma~\ref{alpha-delta}(ii). If $\lim_{n\rightarrow\infty}\rho\!_{_{A_{\alpha}}}\!(G_n)=2$,  Inequalities \eqref{kxk} imply \eqref{mink}.

\smallskip
If instead $\lim_{n\rightarrow\infty}\rho\!_{_{A_{\alpha}}}\!(G)>2$, we use the fact that $G_n= (X_x(P_n))u_n\!:\! yY$. Applying
Lemma \ref{alpha-g1g2}, we obtain:
\begin{equation*}
\scalemath{.95}{\phi(G_n)=\phi(X_x(P_n))\phi(Y)-\alpha\phi(X_x(P_n))\phi (Y)_y-\alpha\phi(X_x(P_n))_{u_n}\phi(Y)+(2\alpha-1) \phi(X_x(P_n))_{u_n}   \phi(Y)_y,     } \end{equation*}
where $$ \phi(X_x(P_n)) = \phi(X)\phi(P_n)-\alpha \phi(X)_x \phi(P_n) - \alpha\phi(X)\phi (B_{n-1})+(2\alpha-1)\phi(X)_x\phi(B_{n-1}) $$
and \[ \scalemath{.95}{ \phi(X_x(P_n))_{u_n} =\phi(X)\phi(B_{n-1})-\alpha \phi(X)_x \phi(B_{n-1})-\alpha\phi(X)\phi(H_{n-2})
       +(2\alpha-1)\phi(X)_x\phi(H_{n-2}).} \]

A straightforward algebraic manipulation shows that $\phi(G_n) = \phi (P_n) \cdot \Gamma_n$, where

\[  \scalemath{.95}{  \begin{aligned}
\Gamma_n =  \left(  \left( 1-\alpha \frac{\phi(B_{n-1})}{\phi(P_{n})} \right)  \phi(X) -\left(
    \alpha -(2\alpha-1)  \frac{\phi(B_{n-1})}{\phi(P_{n})} \right) \phi(X)_x \right) \left( \phi(Y)-\alpha\phi(Y)_y \right) \phantom{+++++}\\[0.6em]
       +\frac{\phi(B_{n-1})}{\phi(P_{n})}  \left(  \left( 1-\alpha \frac{\phi(H_{n-2})}{\phi(B_{n-1})} \right)  \phi(X) -\left(
    \alpha -(2\alpha-1)  \frac{\phi(H_{n-2})}{\phi(B_{n-1})} \right) \phi(X)_x \right) \left( (2\alpha-1)\phi(Y)_y-\alpha\phi(Y) \right)
      \end{aligned} }\]

By Proposition \ref{gongshi}(iv), we know that
$$\lim_{n\rightarrow\infty}\phi(B_{n-1})/\phi(P_n)=\lim_{n\rightarrow\infty}\phi(H_{n-2})/\phi(B_{n-1})
=h(\lambda)_{\alpha}.$$
It is now elementary to check that $\lim_{n\rightarrow\infty}\Gamma_{n}$ is the product of
$$\left(1-\alpha\cdot h(\lambda)_{\alpha} \right)\phi(X)-
\left(\alpha-(2\alpha-1)\cdot h(\lambda)_{\alpha}\right)\phi(X)_x$$
and
$$\left(1-\alpha\cdot h(\lambda)_{\alpha} \right)\phi(Y)-
\left(\alpha-(2\alpha-1)\cdot h(\lambda)_{\alpha}\right)\phi(Y)_y,$$
whose maximum roots, by
Proposition~\ref{alpha-gupn}, are $\lim\limits_{n\rightarrow\infty}\rho\!_{_{A_{\alpha}}}\!(X_x(P_n))$ and $\lim\limits_{n\rightarrow\infty}\rho\!_{_{A_{\alpha}}}\!(Y_y(P_n))$ respectively. This ends the proof.
\end{proof}

\section{Remarks}

In this paper, we summarize the results on Hoffman program of graphs with respect to the adjacency, the Laplacian, the signless Laplacian, the Hermitian adjacency and skew-adjacency matrix of  graphs. As well, the tensors of hypergraphs are also involved.  Moreover, we put forward to some related problems for further study. Particularly, we obtain new results about the Hoffman program with relation to the $A_\alpha$-matrix.
 
As already observed in Section 9, the $A_{\alpha}$-matrix of a graph $G$ encodes the properties of $A(G)$, $Q(G)$, and $L(G)$. 
Therefore, it is reasonable to focus efforts to obtain a formula for the $A_{\alpha}$-counterpart of   
the $A$-limit point $\sqrt{2+\sqrt{5}}$, and of the number $\omega$ which is both an $L$- and a $Q$-limit point. We have made some progress in this direction, but the presence of the variable $\alpha$ makes the $A_\alpha$-polynomials of graphs quite hard to manipulate. We will discuss this matter in a forthcoming paper, containing further advances on the Hoffman program for the $A_\alpha$-matrix.

\section*{Acknowledgments}
The first  author is supported for this research by the National Natural Science Foundation of China (No. 11971274).

\small{

}

\end{document}